\newtheorem{theorem}{Theorem}
\newtheorem{example}[theorem]{Example}
\newtheorem{lemma}[theorem]{Lemma}
\newtheorem{proposition}[theorem]{Proposition}
\newtheorem{corollary}[theorem]{Corollary}
\numberwithin{equation}{section}
\renewcommand{\phi}{\varphi}
\renewcommand{\leq}{\leqslant}
\renewcommand{\geq}{\geqslant}
\begin{document}
\title[Simple weight modules over wGWA]{Simple weight modules over\\ weak Generalized Weyl algebras}
\author{Rencai L{\"u}, Volodymyr Mazorchuk and Kaiming Zhao}
\date{\today\\ {\bf Keywords:}  algebra; category; module; weight module\\
{\bf 2010  Math. Subj. Class.:} }

\begin{abstract}
In this paper we address the problem of classification of simple weight modules
over weak generalized Weyl algebras of rank one. The principal difference 
between weak generalized Weyl algebras and generalized weight algebras is that
weak generalized Weyl algebras are defined using an endomorphism rather than an
automorphism of a commutative ring $R$. We reduce classification of simple weight modules
over weak generalized Weyl algebras to description of the dynamics of the action
of the above mentioned endomorphism on the set of maximal ideals. We also describe
applications of our results to the study of generalized Heisenberg algebras.
\end{abstract}
\maketitle

\section{Introduction and description of the results}\label{s1}

Let $R$ be a commutative unital ring, $\sigma:R\to R$ an automorphism and $t\in R$. With the datum $(R,\sigma,t)$
one associates a {\em generalized Weyl algebra} $A=A(R,\sigma,t)$ of rank one defined as an $R$-algebra generated
by elements $X$ and $Y$ subject to the relations
\begin{equation}\label{eq1}
YX=t,\,\, XY=\sigma(t),\,\, Xr=\sigma(r)X,\,\, rY=Y\sigma(r)\,\,\text{for all }r\in R. 
\end{equation}
This class of algebras was introduced by Bavula in \cite{Ba1} and later investigated by various authors,
see, in particular, \cite{Ba2,BJ,DGO,Maz,Sh,Ha2} and references therein. Generalized Weyl algebras were further
studied and generalized to natural higher rank analogues, see for example \cite{BB,MT2,BO}, and then to certain 
twisted and multi parameter versions, see for example \cite{MT1,MT3,Ha1,FH1,FH2} and references therein. Many more
papers studying generalized Weyl algebras can be found using Google search or searching MathSciNet.

One of the main reasons why these algebras attracted such a considerable attention is that the class of generalized
Weyl algebras contains many interesting algebras. Thus the first Weyl algebra, the universal enveloping algebra of
$\mathfrak{sl}_2$, the quantum algebra $U_q(\mathfrak{sl}_2)$ and most of the down-up algebras from \cite{BR} 
are all examples of generalized Weyl algebras of rank one. Higher rank, twisted and multi parameter generalizations
cover much more, we refer the reader to the corresponding papers for examples.

One common feature of all studies mentioned above is that the datum defining a generalized Weyl algebra 
(or its generalization) consists
of a base ring (usually commutative and unital), a collection of automorphisms of this ring, a collection of
elements of this ring and, maybe, some more stuff. At the same time, the relations \eqref{eq1} make perfect sense
for any {\bf endomorphism} $\sigma$ of $R$ (which we always assume to be unital, that is $\sigma(1)=1$). 
To distinguish the latter 
case from the original generalized Weyl algebras we will add the word {\em weak} to the classical name of the 
algebra and abbreviate weak generalized Weyl algebras as {\em wGWA}.

Weakening the requirement for $\sigma$ from being an automorphism to  being an endomorphism leads to a new family 
of interesting examples, namely the so-called generalized Heisenberg algebras, see \cite{CRM,LZ}, defined as 
follows: Let $f(h)\in\mathbb{C}[h]$ be a fixed polynomial. Then the corresponding {\em generalized Heisenberg 
algebra} $\mathcal{H}(f)$ is defined as the $\mathbb{C}$-algebra generated by $X,Y$ and $h$ subject to the 
following relations:
\begin{equation}\label{eq2}
hY = Yf(h);\quad Xh = f(h)X;\quad [X, Y] = f(h)-h.
\end{equation}
These algebras have many applications in theoretical physics, see the references in \cite{LZ} for details.
To realize $\mathcal{H}(f)$ as a wGWA, we may, for example, take 
$R=\mathbb{C}[h,z]$, set $t=h+z$ and choose $\sigma$ as the endomorphism
of $R$ sending $z$ to $z$ and $h$ to $f(h)$. It is immediate to check that, using the correspondence
$X\leftrightarrow X$, $Y\leftrightarrow Y$ and $YX\leftrightarrow h+z$, the defining
relations \eqref{eq2} transfer into \eqref{eq1} and vise versa.

This naturally leads to the question of which results for generalized Weyl algebras can be extended to weak
generalized Weyl algebras and how. The paper \cite{LZ} addresses the problem of classification of simple
finite dimensional modules over generalized Heisenberg algebra and makes some progress in study of
simple weight modules. In this paper we use the ideas and methods from \cite{DGO} to study weight modules
for general weak generalized Weyl algebras. It turns out that there are several subtle differences
between GWAs and wGWAs which should be treated with care. To emphasize these differences and to avoid 
unnecessary higher rank technical complications we restrict to the rank one case. This is still enough 
for our main application, which is generalized Heisenberg algebras. A nice and unusual feature of the 
algebras we consider is existence of simple weight modules with both non-trivial finite dimensional
and infinite dimensional weight spaces, see Example~\ref{ex1257}.

The paper is organized as follows: in Section~\ref{s2} we collect some basic facts on simple
weight modules over generalized Weyl algebras and specify our main restriction on the defining endomorphism,
namely that we always assume that this endomorphism is essential, see Subsection~\ref{s2.3}.
In  Subsection~\ref{s2.4} we define several families of weight modules which we call string modules.
In Section~\ref{s3} we establish the classical decomposition of the category of weight modules.
In Section~\ref{s4} we first describe the support of a simple weight module in Proposition~\ref{prop16}
and then prove our main classification result: Theorem~\ref{thm23}. In Section~\ref{s5} we derive 
some corollaries about finite dimensional modules and, finally, in Section~\ref{s6} we describe applications
to generalized Heisenberg algebras.

\noindent{\bf Acknowledgements.}
R.L. is partially supported by NSF of China  (Grant 11371134) and 
Jiangsu Government Scholarship for Overseas Studies (JS-2013-313).\\
V. M. is partially supported by the Swedish Research Council.\\
K.Z. is partially supported by  NSF of China (Grant 11271109) and NSERC.

\section{Weight modules over weak generalized Weyl algebras}\label{s2}

\subsection{Weight and generalized weight modules}\label{s2.1}

Let $A=A(R,\sigma,t)$ be a wGWA. Let $\mathrm{Max}(R)$ denote the set of all maximal ideals in $R$.
For an $A$-module $M$ and $\mathbf{m}\in\mathrm{Max}(R)$ set
\begin{displaymath}
M_{\mathbf{m}}:=\{v\in M\,:\, \mathbf{m}v=0\},\quad
M^{\mathbf{m}}:=\{v\in M\,:\, \mathbf{m}^kv=0,\, k\gg 0\}.
\end{displaymath}
A module $M$ is called a {\em weight} module provided that 
\begin{displaymath}
M=\bigoplus_{\mathbf{m}\in\mathrm{Max}(R)} M_{\mathbf{m}}.
\end{displaymath}
A module $M$ is called a {\em generalized weight} module provided that 
\begin{displaymath}
M=\bigoplus_{\mathbf{m}\in\mathrm{Max}(R)} M^{\mathbf{m}}.
\end{displaymath}
Clearly, any weight module is a generalized weight module.

For a weight $A$-module $M$ define its {\em support} as the set
\begin{displaymath}
\mathrm{supp}(M):=\{\mathbf{m}\in\mathrm{Max}(R)\,:\, M_{\mathbf{m}}\neq 0\}. 
\end{displaymath}
Similarly one defines the support of a generalized weight module.

For $\mathbf{m}\in \mathrm{Max}(R)$ we denote by $\Bbbk_{\mathbf{m}}$ the quotient field
$R/\mathbf{m}$.

%
%
%
%

\subsection{Action of generators on weight vectors}\label{s2.2}

\begin{lemma}\label{lem1}
Let $M$ be a weight $A$-module, $\mathbf{m}\in \mathrm{supp}(M)$ and $v\in M_{\mathbf{m}}$ be a nonzero element.
Then $Xv=0$ implies $t\in\mathbf{m}$. Furthermore, $Yv=0$ implies $\sigma(t)\in\mathbf{m}$.
\end{lemma}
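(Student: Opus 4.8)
The plan is to use the relation $YX = t$ directly on the weight vector $v$. Suppose $Xv = 0$. Then $t\cdot v = (YX)v = Y(Xv) = Y\cdot 0 = 0$, so the element $t$ annihilates the nonzero vector $v\in M_{\mathbf{m}}$. Since $v$ is a weight vector for $\mathbf{m}$, the annihilator of $v$ in $R$ is an ideal containing $\mathbf{m}$; as $\mathbf{m}$ is maximal and $v\neq 0$ the annihilator cannot be all of $R$, so it equals $\mathbf{m}$. Hence $t\in\mathbf{m}$. The same argument with the relation $XY = \sigma(t)$ gives the second statement: if $Yv = 0$, then $\sigma(t)\cdot v = (XY)v = X(Yv) = 0$, and the same maximality argument forces $\sigma(t)\in\mathbf{m}$.

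The only thing requiring a word of care is why $Y(Xv)$ and $X(Yv)$ literally mean the action of $t$ and $\sigma(t)$ on $v$: this is precisely the content of the first two relations in \eqref{eq1}, which hold in $A$ and therefore act as stated on any $A$-module $M$. Everything else is the standard observation that $\{r\in R : rv = 0\}$ is an ideal which, when $v\in M_{\mathbf{m}}\setminus\{0\}$, contains the maximal ideal $\mathbf{m}$ and is proper, hence equals $\mathbf{m}$.

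I do not anticipate a genuine obstacle here; the statement is an immediate consequence of the defining relations. The only mild subtlety worth flagging — and it is really just bookkeeping — is to make sure one invokes $YX = t$ for the first claim and $XY = \sigma(t)$ for the second, rather than confusing the two, since these are genuinely different elements of $R$ when $\sigma$ is not the identity. No hypothesis beyond $v\neq 0$, $\mathbf{m}$ maximal, and $v\in M_{\mathbf{m}}$ is needed.
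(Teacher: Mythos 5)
Your proof is correct and follows exactly the paper's argument: apply the relations $YX=t$ and $XY=\sigma(t)$ to $v$ and conclude from maximality of $\mathbf{m}$ that the annihilator of $v$ in $R$ equals $\mathbf{m}$. Nothing to add.
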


\begin{proof}
From $Xv=0$ we have $0=YXv=tv$ using \eqref{eq1} which implies $t\in\mathbf{m}$. Similarly,
from $Yv=0$ we have $0=XYv=\sigma(t)v$ (again using \eqref{eq1}) which implies $\sigma(t)\in\mathbf{m}$.
\end{proof}

\begin{lemma}\label{lem2}
Let $M$ be a weight $A$-module and $\mathbf{m}\in \mathrm{supp}(M)$. Then
\begin{displaymath}
X M_{\mathbf{m}} \subset \bigoplus_{\mathbf{n}:\sigma(\mathbf{m})\subset \mathbf{n}} M_{\mathbf{n}}. 
\end{displaymath}
\end{lemma}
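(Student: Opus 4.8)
The plan is to reduce the claim to a statement about a single weight vector and then use the commutation relation $Xr=\sigma(r)X$ from \eqref{eq1}. First I would fix a nonzero $v\in M_{\mathbf{m}}$ and observe that it suffices to show $Xv$ lies in the span of the $M_{\mathbf{n}}$ with $\sigma(\mathbf{m})\subset\mathbf{n}$; since $M$ is a weight module, $M=\bigoplus_{\mathbf{n}\in\mathrm{Max}(R)}M_{\mathbf{n}}$, so $Xv$ has a unique decomposition $Xv=\sum_{\mathbf{n}} w_{\mathbf{n}}$ with $w_{\mathbf{n}}\in M_{\mathbf{n}}$ and all but finitely many $w_{\mathbf{n}}$ equal to zero. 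The goal is to show $w_{\mathbf{n}}=0$ whenever $\sigma(\mathbf{m})\not\subset\mathbf{n}$.

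The key computation is the following: for any $r\in R$ we have, using $Xr=\sigma(r)X$,
\begin{displaymath}
\sigma(r)\cdot Xv = X(rv).
\end{displaymath}
Now take $r\in\mathbf{m}$. Then $rv=0$ because $v\in M_{\mathbf{m}}$, so $\sigma(r)\cdot Xv=0$; that is, $\sigma(\mathbf{m})$ annihilates $Xv$, hence so does the ideal $I:=R\sigma(\mathbf{m})$ generated by $\sigma(\mathbf{m})$ in $R$. Applying this to the weight decomposition $Xv=\sum_{\mathbf{n}}w_{\mathbf{n}}$, for each $r\in\mathbf{m}$ we get $0=\sigma(r)Xv=\sum_{\mathbf{n}}\sigma(r)w_{\mathbf{n}}$, and since $\sigma(r)w_{\mathbf{n}}\in M_{\mathbf{n}}$ and the sum is direct, we conclude $\sigma(r)w_{\mathbf{n}}=0$ for every $\mathbf{n}$ and every $r\in\mathbf{m}$. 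Thus $I\cdot w_{\mathbf{n}}=0$ for all $\mathbf{n}$.

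Finally I would argue that $I\cdot w_{\mathbf{n}}=0$ forces $w_{\mathbf{n}}=0$ unless $\sigma(\mathbf{m})\subset\mathbf{n}$, equivalently $I\subset\mathbf{n}$. Indeed, if $\sigma(\mathbf{m})\not\subset\mathbf{n}$ then $I\not\subset\mathbf{n}$, so by maximality of $\mathbf{n}$ we have $I+\mathbf{n}=R$, and we may write $1=a+b$ with $a\in I$, $b\in\mathbf{n}$. Then $w_{\mathbf{n}}=1\cdot w_{\mathbf{n}}=a w_{\mathbf{n}}+b w_{\mathbf{n}}=0+0=0$, using $I w_{\mathbf{n}}=0$ and $\mathbf{n}w_{\mathbf{n}}=0$ (as $w_{\mathbf{n}}\in M_{\mathbf{n}}$). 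This kills all the unwanted components, proving $Xv\in\bigoplus_{\mathbf{n}:\sigma(\mathbf{m})\subset\mathbf{n}}M_{\mathbf{n}}$, and summing over a basis of $M_{\mathbf{m}}$ gives the lemma. I do not anticipate a serious obstacle here; the only point requiring a little care is keeping the book-keeping of the (finite) weight decomposition straight and correctly invoking maximality of $\mathbf{n}$ to get the coprimality $I+\mathbf{n}=R$, but since $\sigma$ is merely an endomorphism (so $\sigma(\mathbf{m})$ need not be maximal, or even prime), one must work with the ideal it generates rather than with $\sigma(\mathbf{m})$ as if it were already an ideal of $R$.
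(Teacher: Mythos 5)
Your proof is correct and follows essentially the same route as the paper's: apply $X\mathbf{m}v=\sigma(\mathbf{m})Xv=0$, decompose $Xv$ into weight components, use directness of the sum to kill each component by $\sigma(\mathbf{m})$, and conclude $\sigma(\mathbf{m})\subset\mathbf{n}$ for every surviving weight $\mathbf{n}$. The only difference is that you spell out the final implication (via $I+\mathbf{n}=R$) that the paper leaves implicit in its ``hence.''
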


\begin{proof}
Let $v$ be a nonzero element in $M_{\mathbf{m}}$. Write $Xv=w_1+w_2+\dots+w_k$ where for each $i$ we have
$\mathbf{n}_i w_i=0$ for some $\mathbf{n}_i \in \mathrm{Max}(R)$. Without loss of generality we may assume 
$w_i\neq 0$ for all $i$ and $\mathbf{n}_i\neq \mathbf{n}_j$ if $i\neq j$. Using \eqref{eq1} we have
\begin{displaymath}
0=X\mathbf{m}v=\sigma(\mathbf{m})Xv= \sum_i \sigma(\mathbf{m})w_i
\end{displaymath}
which means that $\sigma(\mathbf{m})w_i=0$ and hence $\sigma(\mathbf{m})\subset \mathbf{n}_i$ (for each $i$).
The claim follows.
\end{proof}

\begin{lemma}\label{lem3}
Let $M$ be a weight $A$-module and $\mathbf{m}\in \mathrm{supp}(M)$. Assume that one of the following holds:
\begin{enumerate}[$($a$)$]
\item\label{lem3.1} We have $\sigma(t)\not\in\mathbf{m}$.
\item\label{lem3.2} There is $\mathbf{n}\in \mathrm{supp}(M)$ such that $\sigma(\mathbf{n})\subset \mathbf{m}$.
\end{enumerate}
Then
\begin{displaymath}
Y M_{\mathbf{m}} \subset \bigoplus_{\mathbf{n}:\sigma(\mathbf{n})\subset \mathbf{m}} M_{\mathbf{n}}. 
\end{displaymath}
\end{lemma}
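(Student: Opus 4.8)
The plan is to fix a nonzero $v\in M_{\mathbf{m}}$ and decompose $Yv=w_1+\dots+w_k$ with $0\neq w_i\in M_{\mathbf{n}_i}$ for pairwise distinct $\mathbf{n}_i\in\mathrm{Max}(R)$; the goal is to show that each $\mathbf{n}_i$ satisfies $\sigma(\mathbf{n}_i)\subseteq\mathbf{m}$. Put $\mathbf{p}:=\{r\in R:\sigma(r)\in\mathbf{m}\}$, which is a proper ideal of $R$ (proper because $\sigma$ is unital, so $1\notin\mathbf{p}$). The relation $rY=Y\sigma(r)$ in \eqref{eq1} gives $\mathbf{p}\cdot Yv=0$ (for $r\in\mathbf{p}$ one has $rYv=Y\sigma(r)v=0$ since $\sigma(r)\in\mathbf{m}$ kills $v$), and since the $M_{\mathbf{n}_i}$ are $R$-submodules sitting in distinct summands of $M$, this forces $\mathbf{p}\cdot w_i=0$ and hence $\mathbf{p}\subseteq\mathbf{n}_i$ for every $i$. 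The key observation is that this is the \emph{reverse} of the desired containment: $\sigma(\mathbf{n}_i)\subseteq\mathbf{m}$ is equivalent to $\mathbf{n}_i\subseteq\mathbf{p}$. So the whole argument reduces to showing that, under assumption \ref{lem3.1} or \ref{lem3.2}, the ideal $\mathbf{p}$ is in fact \emph{maximal}; once that is known, $\mathbf{p}\subseteq\mathbf{n}_i$ with both maximal forces $\mathbf{n}_i=\mathbf{p}$, and we are done (in fact then $YM_{\mathbf{m}}\subseteq M_{\mathbf{p}}$).

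Case \ref{lem3.2} is immediate: if $\mathbf{n}\in\mathrm{supp}(M)$ satisfies $\sigma(\mathbf{n})\subseteq\mathbf{m}$, then $\mathbf{n}\subseteq\mathbf{p}$, and a proper ideal containing the maximal ideal $\mathbf{n}$ must coincide with $\mathbf{n}$, so $\mathbf{p}=\mathbf{n}$ is maximal. For case \ref{lem3.1}, since $\sigma(t)\notin\mathbf{m}$ the image of $\sigma(t)$ in the field $R/\mathbf{m}$ is invertible, so $\sigma(t)$ acts invertibly on $M_{\mathbf{m}}$; in particular $\sigma(t)v\neq0$. On the other hand $\sigma(t)v=XYv=\sum_i Xw_i$ by \eqref{eq1}, while Lemma~\ref{lem2} places each $Xw_i$ inside $\bigoplus_{\mathbf{q}:\sigma(\mathbf{n}_i)\subseteq\mathbf{q}}M_{\mathbf{q}}$. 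Comparing $M_{\mathbf{m}}$-components, the nonzero element $\sigma(t)v\in M_{\mathbf{m}}$ can only arise if some index $i$ has $\sigma(\mathbf{n}_i)\subseteq\mathbf{m}$, i.e.\ $\mathbf{n}_i\subseteq\mathbf{p}$; together with $\mathbf{p}\subseteq\mathbf{n}_i$ this yields $\mathbf{p}=\mathbf{n}_i$, again maximal.

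The main obstacle here is conceptual rather than computational: the naive use of the relation $rY=Y\sigma(r)$ only confines $Yv$ to the submodule annihilated by $\mathbf{p}=\sigma^{-1}(\mathbf{m})$, which in the wGWA setting need not be a maximal ideal, and one has to recognize that hypotheses \ref{lem3.1} and \ref{lem3.2} are precisely what is needed to promote $\mathbf{p}$ to a maximal ideal — in \ref{lem3.2} by exhibiting a maximal ideal below $\mathbf{p}$, in \ref{lem3.1} by bouncing $Yv$ back up through $X$ and invoking Lemma~\ref{lem2} together with the invertibility of $\sigma(t)$ on $M_{\mathbf{m}}$. Minor points to dispose of along the way are the trivial case $Yv=0$ (automatic under \ref{lem3.1}, since then $\sigma(t)v=XYv=0$ contradicts $\sigma(t)v\neq0$) and the elementary fact that $\sigma(r)v=0$ whenever $\sigma(r)\in\mathbf{m}$, which is what makes $\mathbf{p}\cdot Yv=0$ work.
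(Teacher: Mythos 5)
Your argument is correct; the skeleton (decompose $Yv=w_1+\dots+w_k$ into weight components and show each $\mathbf{n}_i$ satisfies $\sigma(\mathbf{n}_i)\subseteq\mathbf{m}$) and your treatment of hypothesis \eqref{lem3.2} match the paper's proof, but your reduction of case \eqref{lem3.1} to case \eqref{lem3.2} goes by a genuinely different mechanism. The paper applies the ideal $\mathbf{n}_1\cdots\mathbf{n}_k$ to $Yv$, pulls it through $Y$ to get $Y\sigma(\mathbf{n}_1\cdots\mathbf{n}_k)v=0$, uses the injectivity of $Y$ on $M_{\mathbf{m}}$ (Lemma~\ref{lem1} plus $\sigma(t)\notin\mathbf{m}$) to conclude $\sigma(\mathbf{n}_1\cdots\mathbf{n}_k)\subseteq\mathbf{m}$, and then invokes primality of $\mathbf{m}$ to isolate one factor with $\sigma(\mathbf{n}_i)\subseteq\mathbf{m}$. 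You instead push $Yv$ back up with $X$, using $XY=\sigma(t)$ and Lemma~\ref{lem2}: since $\sigma(t)v\neq 0$ sits in $M_{\mathbf{m}}$ and each $Xw_i$ is supported on weights containing $\sigma(\mathbf{n}_i)$, some $\sigma(\mathbf{n}_i)$ must land in $\mathbf{m}$. Both reductions are sound; the paper's trades on injectivity of $Y$ and primality of maximal ideals, yours on Lemma~\ref{lem2} and the non-vanishing of $\sigma(t)$ on $M_{\mathbf{m}}$. Your packaging via the proper (indeed prime) ideal $\mathbf{p}=\sigma^{-1}(\mathbf{m})$ is a nice conceptual gloss --- it makes transparent that the content of both hypotheses is precisely that $\mathbf{p}$ is maximal, which also explains why the conclusion can fail without them --- and it recovers the uniqueness statement $YM_{\mathbf{m}}\subseteq M_{\mathbf{p}}$ that the paper only extracts later in Lemma~\ref{lem3new} via Lemma~\ref{lem5}.
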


\begin{proof}
Let $v$ be a nonzero element in $M_{\mathbf{m}}$. Write $Yv=w_1+w_2+\dots+w_k$ 
where $\mathbf{n}_i w_i=0$ for some $\mathbf{n}_i \in \mathrm{Max}(R)$ (as above, we assume 
$w_i\neq 0$ for all $i$ and $\mathbf{n}_i\neq \mathbf{n}_j$ if $i\neq j$). Using \eqref{eq1} we have
\begin{displaymath}
0=\mathbf{n}_1\mathbf{n}_2\cdots\mathbf{n}_k Yv= Y\sigma(\mathbf{n}_1\mathbf{n}_2\cdots\mathbf{n}_k)v.
\end{displaymath}
If we have $\sigma(t)\not\in\mathbf{m}$ (that is condition \eqref{lem3.1} is satisfied), 
then the action of $Y$ on $M_{\mathbf{m}}$ is injective by Lemma~\ref{lem1} and
hence $\sigma(\mathbf{n}_1\mathbf{n}_2\cdots\mathbf{n}_k)v=0$, that is we have the inclusion
$R\sigma(\mathbf{n}_1\mathbf{n}_2\cdots\mathbf{n}_k)R\subset \mathbf{m}$. Then we have
\begin{displaymath}
R\sigma(\mathbf{n}_1)R\,R\sigma(\mathbf{n}_2)R\,\cdots \,R\sigma(\mathbf{n}_k)R\subset \mathbf{m}. 
\end{displaymath}
As $\mathbf{m}$ is maximal (and hence prime), we get $R\sigma(\mathbf{n}_i)R\subset \mathbf{m}$ 
(which is equivalent to $\sigma(\mathbf{n}_i)\subset \mathbf{m}$) for some $i$. This shows that
condition \eqref{lem3.2} is satisfied.

Let us now prove the claim in the case when condition \eqref{lem3.2} is satisfied.
If there is $\mathbf{n}\in \mathrm{supp}(M)$ such that $\sigma(\mathbf{n})\subset \mathbf{m}$, then from \eqref{eq1}
we have $\mathbf{n}Yv=Y\sigma(\mathbf{n})v=0$, which implies $\mathbf{n}=\mathbf{n}_i$ for all $i$.
The claim follows.
\end{proof}

\begin{corollary}\label{cor4}
Let $\mathbf{m}\in\mathrm{Max}(R)$ be such that $\sigma(t)\not\in \mathbf{m}$ and for any 
$\mathbf{n}\in\mathrm{Max}(R)$ we have $\sigma(\mathbf{n})\not\subset \mathbf{m}$.
Then for any weight module $M$ we have $M_{\mathbf{m}}=0$.
\end{corollary}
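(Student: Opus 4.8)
The plan is to argue by contradiction: suppose $M$ is a weight module with $M_{\mathbf{m}}\neq 0$, so that $\mathbf{m}\in\mathrm{supp}(M)$. The hypotheses on $\mathbf{m}$ are exactly tailored so that neither of the two mechanisms producing nonzero vectors in $M_{\mathbf{m}}$ via the generators $X$ and $Y$ can operate. Concretely, I would fix a nonzero $v\in M_{\mathbf{m}}$ and examine how $v$ sits inside $M$ relative to the action of $X$ and $Y$.

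First I would deal with $Y$. Since $\sigma(t)\not\in\mathbf{m}$ by assumption, condition \eqref{lem3.1} of Lemma~\ref{lem3} holds, so $YM_{\mathbf{m}}\subset\bigoplus_{\mathbf{n}:\sigma(\mathbf{n})\subset\mathbf{m}}M_{\mathbf{n}}$. But the second hypothesis says there is \emph{no} $\mathbf{n}\in\mathrm{Max}(R)$ with $\sigma(\mathbf{n})\subset\mathbf{m}$, so this direct sum is empty and $YM_{\mathbf{m}}=0$; in particular $Yv=0$. By Lemma~\ref{lem1}, $Yv=0$ forces $\sigma(t)\in\mathbf{m}$, contradicting the first hypothesis. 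This already closes the argument, so in fact $M_{\mathbf{m}}=0$.

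There is essentially no obstacle here: the corollary is an immediate specialization of Lemma~\ref{lem3}\eqref{lem3.1} combined with Lemma~\ref{lem1}, and the only thing to check is that the index set $\{\mathbf{n}:\sigma(\mathbf{n})\subset\mathbf{m}\}$ is empty under the stated hypothesis, which is verbatim one of the assumptions. One mild point worth a sentence in the write-up is that we never needed to invoke the $X$-side at all (Lemma~\ref{lem2}), nor condition \eqref{lem3.2}; the hypothesis ``$\sigma(\mathbf{n})\not\subset\mathbf{m}$ for all $\mathbf{n}$'' is used precisely to kill the target of Lemma~\ref{lem3}. If one prefers a more symmetric formulation, one could alternatively note that $Xv$ lands in $\bigoplus_{\mathbf{n}:\sigma(\mathbf{m})\subset\mathbf{n}}M_{\mathbf{n}}$ by Lemma~\ref{lem2}, but this is not needed for the proof.
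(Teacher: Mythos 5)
Your argument is correct and is essentially the paper's own proof: both combine Lemma~\ref{lem1} (equivalently, the relation $XY=\sigma(t)$) with Lemma~\ref{lem3}\eqref{lem3.1} to reach a contradiction, the only difference being that the paper first deduces $Yv\neq 0$ from $\sigma(t)\not\in\mathbf{m}$ and then contradicts Lemma~\ref{lem3}, whereas you first deduce $Yv=0$ from Lemma~\ref{lem3} and then contradict Lemma~\ref{lem1}. This is the same argument read in the opposite order, and your write-up is fine.
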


\begin{proof}
Assume $v\in M_{\mathbf{m}}$ is nonzero. Then $\sigma(t)v\neq 0$ as $\sigma(t)\not\in \mathbf{m}$
and hence $Yv\neq 0$ either. As for any $\mathbf{n}\in\mathrm{Max}(R)$ we have 
$\sigma(\mathbf{n})\not\subset \mathbf{m}$, we get a contradiction with Lemma~\ref{lem3}.
\end{proof}

\begin{lemma}\label{lem5}
For any $\mathbf{m}\in\mathrm{Max}(R)$ there is at most one $\mathbf{n}\in\mathrm{Max}(R)$ such that
$\sigma(\mathbf{n})\subset \mathbf{m}$.
\end{lemma}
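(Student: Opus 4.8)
The plan is to argue by contradiction using the fact that $\sigma$ is a unital ring endomorphism together with the maximality (hence primality) of the ideals involved. Suppose $\mathbf{n}_1,\mathbf{n}_2\in\mathrm{Max}(R)$ are distinct and both satisfy $\sigma(\mathbf{n}_1)\subset\mathbf{m}$ and $\sigma(\mathbf{n}_2)\subset\mathbf{m}$. Since $\mathbf{n}_1\neq\mathbf{n}_2$ and both are maximal, they are comaximal, so $\mathbf{n}_1+\mathbf{n}_2=R$; in particular there are $a\in\mathbf{n}_1$, $b\in\mathbf{n}_2$ with $a+b=1$.

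Next I would apply $\sigma$ to the identity $a+b=1$. Because $\sigma$ is additive and unital, $\sigma(a)+\sigma(b)=\sigma(1)=1$. But $\sigma(a)\in\sigma(\mathbf{n}_1)\subset\mathbf{m}$ and $\sigma(b)\in\sigma(\mathbf{n}_2)\subset\mathbf{m}$, so $1=\sigma(a)+\sigma(b)\in\mathbf{m}$, contradicting the fact that $\mathbf{m}$ is a proper ideal. Hence no two distinct maximal ideals can both be pulled into $\mathbf{m}$ by $\sigma$, which is exactly the claim.

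I expect no real obstacle here: the only point requiring a moment's care is that one must use comaximality of distinct maximal ideals rather than trying to work with the images $\sigma(\mathbf{n}_i)$ directly (these need not be ideals of $R$, only subsets, since $\sigma$ need not be surjective — this is precisely the "weak" feature of the setting). Pulling the partition-of-unity relation through $\sigma$ sidesteps that issue cleanly. An alternative phrasing, if one prefers to avoid picking explicit elements, is to note that $\sigma^{-1}(\mathbf{m})$ is a prime ideal of $R$ (pullback of a prime along a ring homomorphism), and any maximal ideal $\mathbf{n}$ with $\sigma(\mathbf{n})\subset\mathbf{m}$ satisfies $\mathbf{n}\subset\sigma^{-1}(\mathbf{m})$; since $\mathbf{n}$ is maximal and $\sigma^{-1}(\mathbf{m})$ is proper, this forces $\mathbf{n}=\sigma^{-1}(\mathbf{m})$, so $\mathbf{n}$ is uniquely determined. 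Either route is short; I would present the comaximality argument as it is the most elementary.
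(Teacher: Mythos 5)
Your comaximality argument is correct and is essentially the paper's own proof: the paper writes $\mathbf{k}+\mathbf{n}=R$ and hence $\sigma(R)=\sigma(\mathbf{k}+\mathbf{n})\subset\mathbf{m}$, contradicting $\sigma(1)=1$, which is exactly your partition-of-unity computation in condensed form. Your alternative via the proper prime ideal $\sigma^{-1}(\mathbf{m})$ is also valid, but no further comment is needed.
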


\begin{proof}
Assume $\mathbf{k},\mathbf{n}\in\mathrm{Max}(R)$ are such that $\sigma(\mathbf{k})\subset \mathbf{m}$, 
$\sigma(\mathbf{n})\subset \mathbf{m}$ and $\mathbf{k}\neq\mathbf{n}$. Then $\mathbf{k}+\mathbf{n}=R$
and hence $\sigma(R)=\sigma(\mathbf{k}+\mathbf{n})\subset \mathbf{m}$ which contradicts $\sigma(1)=1$
(recall that $\sigma$ was assumed to be a unital endomorphism of $R$).
\end{proof}

\subsection{Essential endomorphisms}\label{s2.3}

We say that $\sigma$ is {\em essential} provided that for any $\mathbf{m}\in\mathrm{Max}(R)$ the 
canonical inclusion 
\begin{displaymath}
\sigma(R)/(\sigma(R)\cap\mathbf{m})\hookrightarrow R/\mathbf{m} 
\end{displaymath}
is surjective. Obviously, if $\sigma$ is an automorphism, then it is an essential endomorphism.

\begin{example}\label{ex6}
{\rm
Assume that $R$ is a finitely or countably generated algebra over an uncountable algebraically closed field $\Bbbk$
and $\sigma$ is $\Bbbk$-linear.
Then $\Bbbk_{\mathbf{m}}\cong \Bbbk$ for any $\mathbf{m}\in\mathrm{Max}(R)$ (see for example, \cite[Lemma~1]{FGM})
and hence any (unital) endomorphism of $R$ is essential.
} 
\end{example}

\begin{example}\label{ex7}
{\rm
Let $\sigma:\mathbb{C}\to\mathbb{C}$ be a non-surjective ring homomorphism. Then such $\sigma$ is obviously
not essential. Existence of such $\sigma$ follows from the fact that the algebraic closure of the field 
$\mathbb{C}(x)$ of rational functions is isomorphic to 
$\mathbb{C}$ by Steinitz Theorem, see \cite[Page~125]{St}. As $\sigma$ we thus can take the composition
$\mathbb{C}\subsetneq\mathbb{C}(x)\subsetneq\overline{\mathbb{C}(x)}\cong\mathbb{C}$.
} 
\end{example}

\begin{center}
\em From now on we always assume that $\sigma$ is essential.
\end{center}

Our main interest in essential morphisms is the following property which we will use frequently:

\begin{lemma}\label{lem731}
Assume that $\sigma$ is essential. Let $M$ be a weight $A$-module, $\mathbf{m}\in\mathrm{supp}(M)$
and $v\in M_{\mathbf{m}}$. Then $Rv=\sigma(R)v$. 
\end{lemma}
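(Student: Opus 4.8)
The plan is to exploit the defining relation $Xr = \sigma(r)X$ together with the hypothesis that $\sigma$ is essential. Fix $v \in M_{\mathbf{m}}$. The inclusion $\sigma(R)v \subseteq Rv$ is trivial since $\sigma(R) \subseteq R$, so the content is the reverse inclusion $Rv \subseteq \sigma(R)v$. Take an arbitrary $r \in R$; we must write $rv$ in the form $\sigma(s)v$ for some $s \in R$.

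The key observation is that, because $\mathbf{m}v = 0$, the element $rv$ depends only on the image $\bar r$ of $r$ in the residue field $\Bbbk_{\mathbf{m}} = R/\mathbf{m}$; more precisely, $r'v = rv$ whenever $r - r' \in \mathbf{m}$. Now apply the essentiality hypothesis: the canonical map $\sigma(R)/(\sigma(R)\cap\mathbf{m}) \hookrightarrow R/\mathbf{m}$ is surjective, so there is $s \in R$ with $\sigma(s) \equiv r \pmod{\mathbf{m}}$, i.e. $r - \sigma(s) \in \mathbf{m}$. Then
\begin{displaymath}
rv = \sigma(s)v + (r - \sigma(s))v = \sigma(s)v \in \sigma(R)v,
\end{displaymath}
using $(r-\sigma(s))v = 0$ because $r - \sigma(s) \in \mathbf{m}$ and $\mathbf{m}v = 0$. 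Since $r$ was arbitrary, $Rv \subseteq \sigma(R)v$, and together with the trivial inclusion this gives $Rv = \sigma(R)v$.

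There is essentially no obstacle here: the statement is a direct unwinding of the definition of ``essential'' combined with the fact that a weight vector is annihilated by $\mathbf{m}$. The only point requiring a moment's care is making sure one is reducing modulo the correct ideal — one uses essentiality at $\mathbf{m}$ itself (not at $\sigma(\mathbf{m})$ or at some ideal over $\sigma(\mathbf{m})$), because the vector $v$ lives in $M_{\mathbf{m}}$ and it is $r v$ that we are rewriting, with $r$ acting through $R/\mathbf{m}$. Note also that the relation $Xr = \sigma(r)X$ is not actually needed for this particular lemma; it will presumably enter when this lemma is applied to control $X M_{\mathbf{m}}$ and $Y M_{\mathbf{m}}$ in conjunction with Lemmas~\ref{lem2} and~\ref{lem3}.
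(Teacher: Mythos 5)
Your proof is correct and is essentially the paper's own argument, just unwound elementwise: the paper notes that $Rv=\Bbbk_{\mathbf{m}}v$ since $\mathbf{m}v=0$ and that $\Bbbk_{\mathbf{m}}v=\sigma(R)v$ by essentiality, which is exactly your reduction modulo $\mathbf{m}$ followed by lifting through the surjection $\sigma(R)/(\sigma(R)\cap\mathbf{m})\twoheadrightarrow R/\mathbf{m}$. Your closing remarks (that essentiality is used at $\mathbf{m}$ itself, and that the relation $Xr=\sigma(r)X$ is not needed here) are accurate.
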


\begin{proof}
We have $Rv=\Bbbk_{\mathbf{m}}v$ since $\mathbf{m}v=0$. Further,  $\Bbbk_{\mathbf{m}}v=\sigma(R)v$
since $\sigma$ is essential. 
\end{proof}

Further, for essential $\sigma$ we can strengthen Lemma~\ref{lem3} as follows:

\begin{lemma}\label{lem3new}
Assume that $\sigma$ is essential. Let $M$ be a weight $A$-module, $\mathbf{m}\in \mathrm{supp}(M)$
and assume that $Y M_{\mathbf{m}}\neq 0$. Then there is a unique  $\mathbf{n}\in \mathrm{supp}(M)$
such that $\sigma(\mathbf{n})\subset \mathbf{m}$, moreover,  $Y M_{\mathbf{m}} \subset M_{\mathbf{n}}$.
\end{lemma}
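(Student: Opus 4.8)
The plan is to combine Lemma~\ref{lem3} with the extra leverage that essentiality gives us via Lemma~\ref{lem731}. First I would observe that the hypothesis $Y M_{\mathbf{m}}\neq 0$ means there is a nonzero $v\in M_{\mathbf{m}}$ with $Yv\neq 0$. I want to put myself in the situation where case~\eqref{lem3.2} of Lemma~\ref{lem3} applies, so the first order of business is to produce some $\mathbf{n}\in\mathrm{supp}(M)$ with $\sigma(\mathbf{n})\subset\mathbf{m}$. If $\sigma(t)\notin\mathbf{m}$ this is already forced: Lemma~\ref{lem3}\eqref{lem3.1} gives $YM_{\mathbf{m}}\subset\bigoplus_{\mathbf{n}:\sigma(\mathbf{n})\subset\mathbf{m}}M_{\mathbf{n}}$, and since $YM_{\mathbf{m}}\neq 0$ the right-hand side is nonzero, so at least one such $\mathbf{n}$ lies in $\mathrm{supp}(M)$. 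If instead $\sigma(t)\in\mathbf{m}$, I would argue directly: write $Yv=w_1+\dots+w_k$ with $\mathbf{n}_i w_i=0$, $w_i\neq 0$, the $\mathbf{n}_i$ pairwise distinct, as in the proof of Lemma~\ref{lem3}, and run the same computation $0=\mathbf{n}_1\cdots\mathbf{n}_k Yv=Y\sigma(\mathbf{n}_1\cdots\mathbf{n}_k)v$; here I cannot invoke injectivity of $Y$, but I can use Lemma~\ref{lem731}: since $v\in M_{\mathbf{m}}$ and $\sigma$ is essential, $\sigma(R)v=Rv=\Bbbk_{\mathbf{m}}v$, so $\sigma(\mathbf{n}_1\cdots\mathbf{n}_k)v$ is either $0$ or spans $Rv$. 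In the latter case $\sigma(\mathbf{n}_1\cdots\mathbf{n}_k)v=Rv\ni v$ would give $Y v = Y\cdot(\text{something in }\sigma(\mathbf{n}_1\cdots\mathbf{n}_k)v)$ lying in $\bigoplus_i Y M_{\mathbf{n}_i}$... — more cleanly, $\sigma(\mathbf{n}_1\cdots\mathbf{n}_k)v = Rv$ forces, after multiplying the relation by a suitable element, a contradiction with $Yv\neq 0$ only if we also know $Y$ kills $\sigma(\mathbf{n}_1\cdots\mathbf{n}_k)v$; so in fact the clean dichotomy is: either $\sigma(\mathbf{n}_1\cdots\mathbf{n}_k)v=0$, or $YM_{\mathbf m}=Y\sigma(\mathbf{n}_1\cdots\mathbf{n}_k)v=\mathbf{n}_1\cdots\mathbf{n}_k Yv=0$, contradicting the hypothesis. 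Hence $\sigma(\mathbf{n}_1\cdots\mathbf{n}_k)v=0$, i.e. $R\sigma(\mathbf{n}_1)R\cdots R\sigma(\mathbf{n}_k)R\subset\mathbf{m}$, and primality of $\mathbf{m}$ gives $\sigma(\mathbf{n}_i)\subset\mathbf{m}$ for some $i$ with $\mathbf{n}_i\in\mathrm{supp}(M)$.

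Once some $\mathbf{n}\in\mathrm{supp}(M)$ with $\sigma(\mathbf{n})\subset\mathbf{m}$ is in hand, uniqueness of such $\mathbf{n}$ (even without the support restriction) is exactly Lemma~\ref{lem5}. Finally, with case~\eqref{lem3.2} now verified, Lemma~\ref{lem3} gives $YM_{\mathbf{m}}\subset\bigoplus_{\mathbf{k}:\sigma(\mathbf{k})\subset\mathbf{m}}M_{\mathbf{k}}$, and by the uniqueness just established this direct sum has the single summand $M_{\mathbf{n}}$, so $YM_{\mathbf{m}}\subset M_{\mathbf{n}}$, which completes the proof.

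The only genuinely delicate point is the case $\sigma(t)\in\mathbf{m}$, where $Y$ need not act injectively on $M_{\mathbf m}$ and the argument of Lemma~\ref{lem3} cannot be quoted verbatim; the substitute is the observation above that $\mathbf{n}_1\cdots\mathbf{n}_k$ annihilates $Yv$ by construction, so $Y(\sigma(\mathbf{n}_1\cdots\mathbf{n}_k)v)=0$ automatically, and therefore the alternative ``$\sigma(\mathbf n_1\cdots\mathbf n_k)v\neq 0$'' together with $\sigma(R)v=Rv$ (Lemma~\ref{lem731}) would force $YM_{\mathbf m}=YRv=0$, contradicting the hypothesis. Everything else is bookkeeping with the already-proved lemmas.
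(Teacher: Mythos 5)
Your proof is correct and follows essentially the same route as the paper: the heart of both arguments is the essentiality trick of inverting $\sigma(a)$ modulo $\mathbf{m}$ (your equation $\sigma(b)\sigma(a)\equiv 1$) to force $Yv=0$ and obtain a contradiction. The only cosmetic differences are that you split on whether $\sigma(t)\in\mathbf{m}$ and work with the full product ideal $\mathbf{n}_1\cdots\mathbf{n}_k$ plus primality of $\mathbf{m}$, whereas the paper argues uniformly after first reducing to the case where $Yv$ has a single weight component.
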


\begin{proof}
Taking into account Lemmata~\ref{lem3} and  \ref{lem5}, we only need to prove the existence of 
$\mathbf{n}\in \mathrm{supp}(M)$ such that $\sigma(\mathbf{n})\subset \mathbf{m}$.
 
Let $v$ be a nonzero element in $M_{\mathbf{m}}$ such that $Yv\neq 0$. Write $Yv=w_1+w_2+\dots+w_k$ 
where $\mathbf{n}_i w_i=0$ for some $\mathbf{n}_i \in \mathrm{Max}(R)$. We assume 
$w_i\neq 0$ for all $i$ and $\mathbf{n}_i\neq \mathbf{n}_j$ if $i\neq j$. On the one hand,
$\mathbf{n}_k Yv=Y\sigma(\mathbf{n}_k)v$ by \eqref{eq1}. On the other hand,
\begin{displaymath}
\mathbf{n}_k Yv=\mathbf{n}_k(w_1+w_2+\dots+w_k)=\mathbf{n}_kw_1+\mathbf{n}_kw_2+\dots+\mathbf{n}_kw_{k-1}
\end{displaymath}
since $\mathbf{n}_kw_k=0$. Moreover, $\mathbf{n}_kw_i\neq 0$ for all $i<k$ since $\mathbf{n}_k\neq \mathbf{n}_i$.
Therefore we may assume $k=1$ and set $w:=w_1$ and $\mathbf{n}:=\mathbf{n}_1$.

Assume $\sigma(\mathbf{n})\not\subset \mathbf{m}$. We have $0= \mathbf{n}Yv=Y\sigma(\mathbf{n})v$.
Pick $a\in\mathbf{n}$ such that $\sigma(a)\not\in\mathbf{m}$. 
As $\sigma$ is essential, there is $b\in R$ such that 
\begin{equation}\label{eq5}
(\sigma(b)+\mathbf{m})(\sigma(a)+\mathbf{m})=1+\mathbf{m}. 
\end{equation}
On the one hand, we have $baYv=0$ since $aYv=0$. On the other hand,  by \eqref{eq1} we have
\begin{displaymath}
baYv=Y \big(\sigma(b)\sigma(a)v\big)\overset{\eqref{eq5}}{=}Y(1v)=Yv\neq 0
\end{displaymath}
by our assumptions, a contradiction. This completes the proof.
\end{proof}

\subsection{String modules}\label{s2.4}

Denote by ${}_{\infty}Q_{\infty}$ the set of all possible maps $\mathbb{Z}\to\mathrm{max}(R)$,
$i\mapsto \mathbf{m}_i$, such that $\sigma(\mathbf{m}_i)\subset\mathbf{m}_{i+1}$ for all $i\in \mathbb{Z}$.
We will write $\underline{\mathbf{m}}:=\big(\mathbf{m}_i\big)\in {}_{\infty}Q_{\infty}$ if the collection 
$\big(\mathbf{m}_i\big)$ comes from such a map.

For $i\in\mathbb{Z}$ define $X:\Bbbk_{\mathbf{m}_i}\to \Bbbk_{\mathbf{m}_{i+1}}$ as mapping $r+\mathbf{m}_i$ 
to $\sigma(tr)+\mathbf{m}_{i+1}$. Further, define 
$Y:\Bbbk_{\mathbf{m}_i}\to \Bbbk_{\mathbf{m}_{i-1}}$ as mapping $r+\mathbf{m}_i$ to 
$s+\mathbf{m}_{i-1}$ for some $s\in R$ such that $\sigma(s)\in r+\mathbf{m}_i$. Such an $s\in R$ exists 
because $\sigma$ is essential. 
For $\underline{\mathbf{m}}\in {}_{\infty}Q_{\infty}$ set 
\begin{displaymath}
N(\underline{\mathbf{m}}):=\bigoplus_{i\in\mathbb{Z}}\Bbbk_{\mathbf{m}_i}.
\end{displaymath}
Then $N(\underline{\mathbf{m}})$ carries the natural structure of an $R$-module. 
Note that, by construction, the action of $Y$ on $N(\underline{\mathbf{m}})$ defined above
is bijective.

\begin{lemma}\label{lem8}
The above defines on $N(\underline{\mathbf{m}})$ the structure of a weight $A$-module.
\end{lemma}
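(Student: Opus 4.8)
The plan is to verify directly that the operators $X$, $Y$ and the $R$-action defined on $N(\underline{\mathbf{m}})=\bigoplus_{i\in\mathbb{Z}}\Bbbk_{\mathbf{m}_i}$ satisfy the four families of defining relations \eqref{eq1}. Since everything is defined componentwise on the summands $\Bbbk_{\mathbf{m}_i}$, it suffices to check each relation on a single summand and to track which summand the result lands in. First I would record that $\mathbf{m}_i$ annihilates $\Bbbk_{\mathbf{m}_i}$, so the claim that $N(\underline{\mathbf{m}})$ is a weight module is immediate once the $A$-module structure is established, with $\supp(N(\underline{\mathbf{m}}))\subseteq\{\mathbf{m}_i : i\in\mathbb{Z}\}$.

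Next I would check well-definedness of $Y$, since unlike $X$ its formula involves a choice: given $r+\mathbf{m}_i$ we pick $s$ with $\sigma(s)\in r+\mathbf{m}_i$ and output $s+\mathbf{m}_{i-1}$. If $\sigma(s)\equiv\sigma(s')\pmod{\mathbf{m}_i}$ then $\sigma(s-s')\in\mathbf{m}_i$, and since $\sigma(\mathbf{m}_{i-1})\subset\mathbf{m}_i$ we need $s-s'\in\mathbf{m}_{i-1}$; this follows because $\mathbf{m}_i$ is prime and $\sigma(s-s')\in\mathbf{m}_i$ forces... actually one argues as in the proof of Lemma~\ref{lem3new}: if $s-s'\notin\mathbf{m}_{i-1}$ then, $\sigma$ being essential, there is $b$ with $\sigma(b)\sigma(s-s')\equiv 1\pmod{\mathbf{m}_i}$, yet $\sigma(b(s-s'))=\sigma(b)\sigma(s-s')\in\mathbf{m}_i$, contradiction. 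Hence $Y$ is well defined, and bijectivity of $Y$ on each summand is exactly the statement (already noted) that $\sigma$ essential makes $r\mapsto s$ a two-sided inverse to $r\mapsto\sigma(tr)$ up to the $t$-factor; I will make this precise below.

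Now I turn to the relations themselves, working on $\Bbbk_{\mathbf{m}_i}$ with representative $r\in R$. For $rY=Y\sigma(r)$: both sides are maps $\Bbbk_{\mathbf{m}_i}\to\Bbbk_{\mathbf{m}_{i-1}}$; applying $Y$ then multiplying by $r$ gives $rs+\mathbf{m}_{i-1}$ where $\sigma(s)\equiv r'\pmod{\mathbf{m}_i}$ for the input $r'$, while $Y\sigma(r)$ sends $r'$ to $s''$ with $\sigma(s'')\equiv\sigma(r)r'\equiv\sigma(r)\sigma(s)=\sigma(rs)\pmod{\mathbf{m}_i}$, so by well-definedness $s''\equiv rs\pmod{\mathbf{m}_{i-1}}$ — they agree. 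Similarly $Xr=\sigma(r)X$ on $\Bbbk_{\mathbf{m}_i}$ reads $\sigma(tr'r)$ versus $\sigma(r)\sigma(tr')$, equal since $R$ is commutative. For $YX=t$: $X$ sends $r\in\Bbbk_{\mathbf{m}_i}$ to $\sigma(tr)\in\Bbbk_{\mathbf{m}_{i+1}}$, then $Y$ sends this to $s\in\Bbbk_{\mathbf{m}_i}$ with $\sigma(s)\equiv\sigma(tr)\pmod{\mathbf{m}_{i+1}}$; one valid choice is $s=tr$, and by well-definedness of $Y$ any choice satisfies $s\equiv tr\pmod{\mathbf{m}_i}$, so $YX$ acts as multiplication by $t$. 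For $XY=\sigma(t)$: $Y$ sends $r\in\Bbbk_{\mathbf{m}_i}$ to $s\in\Bbbk_{\mathbf{m}_{i-1}}$ with $\sigma(s)\equiv r\pmod{\mathbf{m}_i}$, then $X$ sends $s$ to $\sigma(ts)=\sigma(t)\sigma(s)\equiv\sigma(t)r\pmod{\mathbf{m}_i}$, which is multiplication by $\sigma(t)$.

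The one genuinely delicate point — and the step I expect to be the main obstacle — is the well-definedness argument for $Y$ and the accompanying uniqueness statements (``any choice of $s$ gives the same class mod $\mathbf{m}_{i-1}$''), since these are precisely where essentiality of $\sigma$ is used and where the non-invertibility of $\sigma$ would otherwise break the construction; every relation check above silently invokes this. Once that lemma-style fact is isolated and proved using the trick from Lemma~\ref{lem3new}, the remaining verifications are the short componentwise computations indicated, and since each summand $\Bbbk_{\mathbf{m}_i}$ is killed by $\mathbf{m}_i$ the resulting $A$-module is by definition a weight module.
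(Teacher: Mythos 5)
Your proof is correct and follows essentially the same route as the paper: establish well-definedness of $Y$ (the only place a choice is made) and then verify the four relations of \eqref{eq1} by the same componentwise computations, concluding that the module is weight because each summand $\Bbbk_{\mathbf{m}_i}$ is killed by $\mathbf{m}_i$. One small correction: the existence of $b$ with $\sigma(b)\sigma(s-s')\equiv 1\pmod{\mathbf{m}_i}$ when $s-s'\notin\mathbf{m}_{i-1}$ is not a consequence of essentiality (the class of $\sigma(s-s')$ in $\Bbbk_{\mathbf{m}_i}$ is zero, so the trick of Lemma~\ref{lem3new} does not apply); it follows instead from maximality of $\mathbf{m}_{i-1}$ together with $\sigma(\mathbf{m}_{i-1})\subset\mathbf{m}_i$ (invert $s-s'$ modulo $\mathbf{m}_{i-1}$ and apply $\sigma$), after which your contradiction, and the rest of the argument, goes through as written.
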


\begin{proof}
First let us check that the action of $Y$ is well-defined. If $s,s'\in R$ are such that 
$\sigma(s),\sigma(s')\in r+\mathbf{m}_i$, then $\sigma(s-s')\in \mathbf{m}_i$. Since $\sigma$ is unital,
in particular, nonzero and $\Bbbk_{\mathbf{m}_{i-1}}$ is a field, it follows
that $s-s'\in \mathbf{m}_{i-1}$. Therefore the action of $Y$ is well-defined.

It remains to check relations \eqref{eq1}. For $r\in R$ and $i\in\mathbb{Z}$ we have
(using the notation above)
\begin{gather*}
XY(r+\mathbf{m}_i)=X(s+\mathbf{m}_{i-1})=\sigma(ts)+\mathbf{m}_i=
\sigma(t)r+\mathbf{m}_i,\\
YX(r+\mathbf{m}_i)=Y(\sigma(tr)+\mathbf{m}_{i+1})=tr+\mathbf{m}_i.
\end{gather*}
Moreover, for $a\in R$ we also have 
\begin{multline*}
Xa(r+\mathbf{m}_i)=X(ar+\mathbf{m}_{i})=\sigma(tar)+\mathbf{m}_{i+1}=\\=\sigma(a)(\sigma(tr)+\mathbf{m}_{i+1})=
\sigma(a)X(r+\mathbf{m}_i).
\end{multline*}
Finally, for $a\in R$ we have 
\begin{displaymath}
aY(r+\mathbf{m}_i)=a(s+\mathbf{m}_{i-1})=as+\mathbf{m}_{i-1}
\end{displaymath}
and
\begin{displaymath} 
Y\sigma(a)(r+\mathbf{m}_i)=Y(\sigma(a)r+\mathbf{m}_i)=s'+\mathbf{m}_{i-1},
\end{displaymath}
where $s'$ is such that $\sigma(s')+\mathbf{m}_{i}=\sigma(a)r+\mathbf{m}_{i}$. Note that we have
$r+\mathbf{m}_{i}=\sigma(s)+\mathbf{m}_{i}$ and hence
\begin{displaymath}
\sigma(a)r+\mathbf{m}_{i}=\sigma(a)\sigma(s)+\mathbf{m}_{i}=\sigma(as)+\mathbf{m}_{i} 
\end{displaymath}
since $\sigma$ is a homomorphism.
By the previous paragraph, we thus have $s'+\mathbf{m}_{i-1}=as+\mathbf{m}_{i-1}$ and the proof is complete.
\end{proof}

The module $N(\underline{\mathbf{m}})$ will be referred to as a {\em double infinite string module}.
These modules should be compared with modules in \cite[Section~3]{DGO} and \cite[Lemma~7]{LZ}.

\begin{lemma}\label{lemnew1}
For $\underline{\mathbf{m}},\underline{\mathbf{n}}\in {}_{\infty}Q_{\infty}$ we have
$N(\underline{\mathbf{m}})\cong N(\underline{\mathbf{n}})$ if and only if there is $k\in\mathbb{Z}$ such that
$\mathbf{m}_{i}=\mathbf{n}_{i+k}$ for all $i\in\mathbb{Z}$. 
\end{lemma}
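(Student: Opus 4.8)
The plan is to prove the two directions separately, the ``if'' direction being essentially a bookkeeping exercise and the ``only if'' direction requiring an analysis of where weight spaces can go under $X$ and $Y$.

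For the ``if'' direction, suppose $\mathbf{m}_i=\mathbf{n}_{i+k}$ for all $i$. Then the obvious degree-shift map $\phi:N(\underline{\mathbf{m}})\to N(\underline{\mathbf{n}})$ sending the summand $\Bbbk_{\mathbf{m}_i}$ identically onto the summand $\Bbbk_{\mathbf{n}_{i+k}}=\Bbbk_{\mathbf{m}_i}$ is an isomorphism of vector spaces, and I would check it intertwines the actions of $R$, $X$ and $Y$. For $R$ and $X$ this is immediate from the formulas in Subsection~\ref{s2.4}, since those formulas only refer to the ideals $\mathbf{m}_i$ and the element $t\in R$, not to the index $i$ itself. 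For $Y$ one has a choice of the auxiliary element $s$, but the well-definedness argument in the proof of Lemma~\ref{lem8} shows that the resulting class in $\Bbbk_{\mathbf{m}_{i-1}}=\Bbbk_{\mathbf{n}_{i-1+k}}$ is independent of that choice, so $\phi$ intertwines $Y$ as well. Hence $\phi$ is an $A$-module isomorphism.

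For the ``only if'' direction, suppose $\psi:N(\underline{\mathbf{m}})\xrightarrow{\sim} N(\underline{\mathbf{n}})$ is an isomorphism of $A$-modules. First I would identify the supports: $\mathrm{supp}(N(\underline{\mathbf{m}}))=\{\mathbf{m}_i:i\in\mathbb{Z}\}$ and likewise for $\underline{\mathbf{n}}$, and each weight space $N(\underline{\mathbf{m}})_{\mathbf{m}_i}$ is the one-dimensional summand $\Bbbk_{\mathbf{m}_i}$ (one should note that the $\mathbf{m}_i$ need not be distinct, but I will handle that below). Since $\psi$ commutes with the $R$-action, it permutes weight spaces, so it induces a bijection on supports; more precisely, $\psi$ restricts to an isomorphism $N(\underline{\mathbf{m}})_{\mathbf{p}}\xrightarrow{\sim} N(\underline{\mathbf{n}})_{\mathbf{p}}$ for each maximal ideal $\mathbf{p}$. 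The key structural fact I would use is that in $N(\underline{\mathbf{m}})$ the operator $Y$ is bijective (stated in Subsection~\ref{s2.4}) and maps the summand indexed by $i$ isomorphically onto the summand indexed by $i-1$; dually $X$ maps the summand at $i$ into the summand at $i+1$, and in fact $X$ is injective exactly on those summands where $\sigma(t)\notin\mathbf{m}_i$, while always $XY$ acts on the summand at $i$ as multiplication by the scalar $\sigma(t)+\mathbf{m}_i$ and $YX$ as multiplication by $t+\mathbf{m}_i$. Using bijectivity of $Y$, the whole module is generated over $\Bbbk$ by any single summand $\Bbbk_{\mathbf{m}_{i_0}}$ together with the orbit under the bijective $Y$ and its inverse; concretely, picking any nonzero $v\in N(\underline{\mathbf{m}})_{\mathbf{m}_0}$, the vectors $\{Y^{j}v:j\in\mathbb{Z}\}$ (interpreting $Y^{-1}$ via the inverse of the bijective $Y$) form a basis of $N(\underline{\mathbf{m}})$ with $Y^{j}v$ spanning the summand at index $-j$.

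Now set $w:=\psi(v)\in N(\underline{\mathbf{n}})$, a nonzero weight vector of weight $\mathbf{m}_0$, hence lying in the summand $\Bbbk_{\mathbf{n}_k}$ for some $k$ with $\mathbf{n}_k=\mathbf{m}_0$ (here I use that $\psi$ preserves weights). Since $\psi$ intertwines $Y$ and $Y$ is bijective on both modules, $\psi(Y^{j}v)=Y^{j}w$, and $Y^{j}w$ spans the summand of $N(\underline{\mathbf{n}})$ at index $k-j$. Comparing with the fact that $\{Y^{j}v\}$ is a basis of $N(\underline{\mathbf{m}})$ with $Y^{j}v$ in the summand at index $-j$, and that $\psi$ maps this basis to the basis $\{Y^{j}w\}$ of $N(\underline{\mathbf{n}})$, I conclude that the summand of $N(\underline{\mathbf{m}})$ at index $-j$ is carried to the summand of $N(\underline{\mathbf{n}})$ at index $k-j$; in particular these two one-dimensional spaces are weight spaces for the same maximal ideal, so $\mathbf{m}_{-j}=\mathbf{n}_{k-j}$ for all $j\in\mathbb{Z}$. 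Re-indexing, $\mathbf{m}_i=\mathbf{n}_{i+k}$ for all $i$, as required.

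The step I expect to be the main obstacle is the bookkeeping in the case where the ideals $\mathbf{m}_i$ are not pairwise distinct: then a given weight space $N(\underline{\mathbf{m}})_{\mathbf{p}}$ is a direct sum of several one-dimensional summands, so ``$\psi$ permutes weight spaces'' does not by itself pin down which summand goes where. The remedy is exactly the argument above: one does not argue summand-by-summand using only the $R$-action, but uses the bijective operator $Y$ to transport a single chosen basis across $\psi$, which rigidly determines the index shift $k$ and forces the equality of all the ideals. One should also record why $Y$ being bijective is legitimate on both sides — it is part of the construction in Subsection~\ref{s2.4}, since $\sigma$ is essential — and why $Y$ strictly drops the index by one, which is immediate from the defining formula $Y:\Bbbk_{\mathbf{m}_i}\to\Bbbk_{\mathbf{m}_{i-1}}$.
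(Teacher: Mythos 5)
Your ``if'' direction is fine and is exactly what the paper intends. The problem is in the ``only if'' direction, at the step where you pass from ``$w=\psi(v)$ is a nonzero vector of weight $\mathbf{m}_0$'' to ``$w$ lies in the summand $\Bbbk_{\mathbf{n}_k}$ for some $k$ with $\mathbf{n}_k=\mathbf{m}_0$''. When the ideals $\mathbf{n}_i$ are not pairwise distinct, the weight space $N(\underline{\mathbf{n}})_{\mathbf{m}_0}$ is the direct sum of \emph{all} summands $\Bbbk_{\mathbf{n}_j}$ with $\mathbf{n}_j=\mathbf{m}_0$, so a priori $w$ has nonzero components at several indices $k_1<k_2<\dots$. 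You correctly flag this as the main obstacle, but the remedy you propose does not close the gap: transporting the family $\{Y^jv\}$ through $\psi$ only yields $\psi(Y^jv)=Y^jw$, and if $w$ is spread over the summands at indices $k_1,k_2,\dots$, then $Y^jw$ is spread over the summands at indices $k_1-j,k_2-j,\dots$ (precisely because $Y$ is bijective and lowers the index by exactly one). So $Y^jw$ does not span a single summand, no shift $k$ is singled out, and the sentence ``$Y^jw$ spans the summand of $N(\underline{\mathbf{n}})$ at index $k-j$'' is the unproved single-summand claim in disguise. Nothing in your argument rules out an isomorphism whose value on $v$ genuinely mixes several summands of the same weight; note that when weights repeat the module need not be simple (Proposition~\ref{prop13}), so no rigidity from simplicity is available.

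The paper's proof avoids ever claiming that $\psi(v)$ is concentrated in one summand: write $\psi(v)=\sum_i w_i$ with $w_i\in\Bbbk_{\mathbf{n}_i}$ and set $l:=\max\{i:w_i\neq 0\}$. Since every nonzero $w_i$ is killed by $\mathbf{m}_0$, one gets $\mathbf{n}_l=\mathbf{m}_0$. The equalities $\mathbf{n}_{l-i}=\mathbf{m}_{-i}$ for $i\geq 0$ then follow from the uniqueness of ``predecessors'' (Lemma~\ref{lem5}, via Lemma~\ref{lem3new} or Lemma~\ref{lem12}) and require no isomorphism at all. For the upward direction one uses that $Y$ lowers the \emph{maximal} index of any vector by exactly one: taking $u$ a generator of the index-$1$ summand of $N(\underline{\mathbf{m}})$, one has $Yu=rv$ with $r\notin\mathbf{m}_0$, hence $Y\psi(u)=r\psi(v)$ has maximal index $l$, hence $\psi(u)$ has maximal index $l+1$; since $\psi(u)$ has weight $\mathbf{m}_1$, this forces $\mathbf{n}_{l+1}=\mathbf{m}_1$, and one inducts. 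Replacing your single-summand assertion by this ``track the top component'' argument repairs the proof; the rest of your write-up (supports, bijectivity of $Y$, the well-definedness of the shift map) is correct.
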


\begin{proof}
The ``if'' part is clear. To prove the ``only if'' part we consider an isomorphism 
$\varphi:N(\underline{\mathbf{m}})\to N(\underline{\mathbf{n}})$ and let $v\in \Bbbk_{\mathbf{m}_0}$ be a 
nonzero element. We have $\varphi(v)=\sum_{i\in\mathbb{Z}}w_i$ with $w_i\in \Bbbk_{\mathbf{n}_i}$.
As $\varphi(v)\neq 0$ we may define $l:=\max\{i\in\mathbb{Z}:w_i\neq 0\}$. As $\varphi$ is an isomorphism,
we have $\mathbf{n}_{l}=\mathbf{m}_{0}$. Now the fact that
$\mathbf{n}_{l-i}=\mathbf{m}_{-i}$ for all $i\in\mathbb{N}$ follows from Lemma~\ref{lem3new}.
The fact that $\mathbf{n}_{l+i}=\mathbf{m}_{+i}$ for all $i\in\mathbb{N}$ follows by combining that
$\varphi$ is a isomorphism and that the action of $Y$ is bijective on both $N(\underline{\mathbf{m}})$ and $N(\underline{\mathbf{n}})$.
\end{proof}

Denote by $Q_{\infty}$ the set of all maps $\mathbb{Z}_+:=\{0,1,2,\dots\}\to\mathrm{max}(R)$,
$i\mapsto \mathbf{m}_i$, such that $\sigma(\mathbf{m}_i)\subset\mathbf{m}_{i+1}$ for all $i\in \mathbb{Z}_+$
and, additionally, $\sigma(t)\in \mathbf{m}_0$. We will write $\underline{\mathbf{m}}:=\big(\mathbf{m}_i\big)
\in Q_{\infty}$  if the collection  $\big(\mathbf{m}_i\big)$ comes from such a map.

For $\underline{\mathbf{m}}\in Q_{\infty}$ set 
\begin{displaymath}
N^{\uparrow}(\underline{\mathbf{m}}):=\bigoplus_{i\in\mathbb{Z}_+}\Bbbk_{\mathbf{m}_i}.
\end{displaymath}
Then $N^{\uparrow}(\underline{\mathbf{m}})$ carries the natural structure of an $R$-module. For $i\in\mathbb{Z}_+$
define $X:\Bbbk_{\mathbf{m}_i}\to \Bbbk_{\mathbf{m}_{i+1}}$ as mapping $r+\mathbf{m}_i$ to
$\sigma(tr)+\mathbf{m}_{i+1}$. Finally, set $Y\Bbbk_{\mathbf{m}_0}=0$ and for $i\in\{1,2,3,\dots\}$ define 
$Y:\Bbbk_{\mathbf{m}_i}\to \Bbbk_{\mathbf{m}_{i-1}}$ as mapping $r+\mathbf{m}_i$ to 
$s+\mathbf{m}_{i-1}$ for some $s\in R$ such that $\sigma(s)\in r+\mathbf{m}_i$. Such an $s\in R$ exists 
because $\sigma$ is essential.

\begin{lemma}\label{lem9}
The above defines on $N^{\uparrow}(\underline{\mathbf{m}})$ the structure of a weight $A$-module.
\end{lemma}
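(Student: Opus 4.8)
The plan is to verify Lemma~\ref{lem9} by essentially the same computation as in the proof of Lemma~\ref{lem8}, after checking that the operators are well-defined and that no problem arises at the boundary index $i=0$. First I would observe that the action of $Y$ on $N^{\uparrow}(\underline{\mathbf{m}})$ for $i\geq 1$ is well-defined by exactly the argument used in Lemma~\ref{lem8}: if $\sigma(s),\sigma(s')\in r+\mathbf{m}_i$ then $\sigma(s-s')\in\mathbf{m}_i$, and since $\sigma$ is unital and $\Bbbk_{\mathbf{m}_{i-1}}$ is a field, $s-s'\in\mathbf{m}_{i-1}$. The action of $X$ is visibly well-defined since $\mathbf{m}_i\subset\sigma^{-1}(\mathbf{m}_{i+1})$ gives $\sigma(tr)+\mathbf{m}_{i+1}$ depending only on $r+\mathbf{m}_i$. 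The $R$-module structure is the obvious one on the direct sum of the fields $\Bbbk_{\mathbf{m}_i}$.

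Next I would check the relations \eqref{eq1}. For indices $i\geq 1$ all four relations are verified by literally the same chains of equalities as in Lemma~\ref{lem8}, using that $\sigma$ is a homomorphism and that $\sigma(s)\in r+\mathbf{m}_i$ whenever $Y(r+\mathbf{m}_i)=s+\mathbf{m}_{i-1}$. The only genuinely new point concerns the bottom of the string, namely $i=0$, where $Y$ acts as zero. Here I would separately verify:
\begin{itemize}
\item $YX(r+\mathbf{m}_0)=tr+\mathbf{m}_0$: indeed $YX(r+\mathbf{m}_0)=Y(\sigma(tr)+\mathbf{m}_1)=tr+\mathbf{m}_0$ by definition of $Y$ on $\Bbbk_{\mathbf{m}_1}$, so this relation holds exactly as before, with no role played by the condition $\sigma(t)\in\mathbf{m}_0$.
\item $XY(r+\mathbf{m}_0)=\sigma(t)(r+\mathbf{m}_0)$: the left side is $X(0)=0$, and the right side is $\sigma(t)r+\mathbf{m}_0=0$ precisely because $\sigma(t)\in\mathbf{m}_0$, which is exactly the extra defining condition on $Q_\infty$. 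This is the one place where that condition is needed.
\item $Xa(r+\mathbf{m}_0)=\sigma(a)X(r+\mathbf{m}_0)$ and $aY(r+\mathbf{m}_0)=Y\sigma(a)(r+\mathbf{m}_0)$ for $a\in R$: the first is the same computation as in Lemma~\ref{lem8}; the second reads $0=0$ on both sides since $Y$ kills $\Bbbk_{\mathbf{m}_0}$ and $\sigma(a)r+\mathbf{m}_0\in\Bbbk_{\mathbf{m}_0}$ is again killed by $Y$.
\end{itemize}

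Finally, one should note that $N^{\uparrow}(\underline{\mathbf{m}})=\bigoplus_{i\in\mathbb{Z}_+}\Bbbk_{\mathbf{m}_i}$ with $\mathbf{m}_i\in\mathrm{Max}(R)$ and $\mathbf{m}_i\Bbbk_{\mathbf{m}_i}=0$ is by construction a weight module: each summand $\Bbbk_{\mathbf{m}_i}$ is annihilated by $\mathbf{m}_i$, so $N^{\uparrow}(\underline{\mathbf{m}})_{\mathbf{m}}\supseteq\bigoplus_{i:\mathbf{m}_i=\mathbf{m}}\Bbbk_{\mathbf{m}_i}$ and the whole module is the sum of these, hence $N^{\uparrow}(\underline{\mathbf{m}})=\bigoplus_{\mathbf{m}\in\mathrm{Max}(R)}N^{\uparrow}(\underline{\mathbf{m}})_{\mathbf{m}}$. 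I do not anticipate a real obstacle here: the proof is a routine check, and the only subtlety — and the only step worth spelling out carefully — is the verification of $XY=\sigma(t)$ at the bottom index, which is what forces the hypothesis $\sigma(t)\in\mathbf{m}_0$ into the definition of $Q_\infty$. The cleanest write-up simply says ``the verification repeats that of Lemma~\ref{lem8} for $i>0$, and for $i=0$ one checks directly that $YX(r+\mathbf{m}_0)=tr+\mathbf{m}_0$, while $XY(r+\mathbf{m}_0)=0=\sigma(t)r+\mathbf{m}_0$ since $\sigma(t)\in\mathbf{m}_0$, and the two relations involving $a\in R$ are immediate.''
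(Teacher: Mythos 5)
Your proof is correct and follows exactly the route the paper intends: the published proof is simply ``mutatis mutandis the proof of Lemma~\ref{lem8},'' and your write-up supplies precisely the mutanda, namely the boundary checks at $i=0$ and the observation that $XY(r+\mathbf{m}_0)=0=\sigma(t)r+\mathbf{m}_0$ is where the condition $\sigma(t)\in\mathbf{m}_0$ in the definition of $Q_\infty$ is used.
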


\begin{proof}
Mutatis mutandis the proof of Lemma~\ref{lem8}.
\end{proof}

The module $N^{\uparrow}(\underline{\mathbf{m}})$ will be referred to as a {\em right infinite string module}.
Note that some right infinite string modules are subquotients of double infinite string modules.
The weight $\mathbf{m}_0$ will be called the {\em lowest} weight.

\begin{lemma}\label{lemnew2}
For $\underline{\mathbf{m}},\underline{\mathbf{n}}\in Q_{\infty}$ we have
$N^{\uparrow}(\underline{\mathbf{m}})\cong N^{\uparrow}(\underline{\mathbf{n}})$ 
if and only if $\underline{\mathbf{m}}=\underline{\mathbf{n}}$. 
\end{lemma}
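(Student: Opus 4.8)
The plan is to mimic the proof of Lemma~\ref{lemnew1}, but to exploit the presence of the distinguished lowest weight $\mathbf{m}_0$, which here pins down the isomorphism completely (no shift is possible). The ``if'' direction is trivial, so I would concentrate on the ``only if'' direction. Suppose $\varphi\colon N^{\uparrow}(\underline{\mathbf{m}})\to N^{\uparrow}(\underline{\mathbf{n}})$ is an isomorphism of $A$-modules. First I would locate where $\varphi$ sends the lowest weight space. Since $\mathbf{m}_0\in\supp N^{\uparrow}(\underline{\mathbf{m}})$ and $\varphi$ is an $R$-module map, $\varphi$ maps $\Bbbk_{\mathbf{m}_0}=N^{\uparrow}(\underline{\mathbf{m}})_{\mathbf{m}_0}$ into $N^{\uparrow}(\underline{\mathbf{n}})_{\mathbf{m}_0}$; the latter is nonzero, so $\mathbf{m}_0\in\supp N^{\uparrow}(\underline{\mathbf{n}})=\{\mathbf{n}_i : i\in\mathbb{Z}_+\}$.

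The key step is to show $\mathbf{m}_0=\mathbf{n}_0$, after which everything propagates. I would argue that $\Bbbk_{\mathbf{m}_0}$ is precisely the ``bottom'' of the string, characterised intrinsically as $\{v : Yv=0,\ v \text{ a weight vector}\}$, and that in $N^{\uparrow}(\underline{\mathbf{n}})$ this same kernel is exactly $\Bbbk_{\mathbf{n}_0}$. For this I must check that $Y$ acts injectively on $\Bbbk_{\mathbf{n}_i}$ for every $i\geq 1$. By construction of $N^{\uparrow}(\underline{\mathbf{n}})$, the map $Y\colon\Bbbk_{\mathbf{n}_i}\to\Bbbk_{\mathbf{n}_{i-1}}$ sends $r+\mathbf{n}_i$ to $s+\mathbf{n}_{i-1}$ where $\sigma(s)\in r+\mathbf{n}_i$; if this is zero then $s\in\mathbf{n}_{i-1}$, so $\sigma(s)\in\sigma(\mathbf{n}_{i-1})\subset\mathbf{n}_i$, forcing $r\in\mathbf{n}_i$, i.e.\ $r+\mathbf{n}_i=0$. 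Hence $\ker(Y)=\Bbbk_{\mathbf{n}_0}$ on the nose, and symmetrically $\ker(Y)=\Bbbk_{\mathbf{m}_0}$ on $N^{\uparrow}(\underline{\mathbf{m}})$. Since $\varphi$ is an isomorphism it carries $\ker(Y)$ to $\ker(Y)$, whence $\varphi(\Bbbk_{\mathbf{m}_0})=\Bbbk_{\mathbf{n}_0}$ and thus $\mathbf{m}_0=\mathbf{n}_0$.

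Having identified the base, I would propagate upward using $X$. For each $i\geq 0$, the weight space $\Bbbk_{\mathbf{m}_{i+1}}$ is reached from $\Bbbk_{\mathbf{m}_i}$ via the action of $X$: indeed $X\colon\Bbbk_{\mathbf{m}_i}\to\Bbbk_{\mathbf{m}_{i+1}}$ is surjective (even a $\Bbbk$-linear isomorphism of one-dimensional-type spaces, since $X(r+\mathbf{m}_i)=\sigma(tr)+\mathbf{m}_{i+1}$ and $\sigma(t)\notin\mathbf{m}_{i+1}$ by essentiality together with $\sigma(t)\notin\mathbf{m}_0$ propagated by Lemma~\ref{lem1}-type reasoning — more precisely $\sigma^{i+1}(t)\notin\mathbf{m}_{i+1}$). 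So $X^i\Bbbk_{\mathbf{m}_0}=\Bbbk_{\mathbf{m}_i}$ and likewise $X^i\Bbbk_{\mathbf{n}_0}=\Bbbk_{\mathbf{n}_i}$ inside $N^{\uparrow}(\underline{\mathbf{n}})$. Applying $\varphi$ and using that it commutes with $X$ gives $\varphi(\Bbbk_{\mathbf{m}_i})=\varphi(X^i\Bbbk_{\mathbf{m}_0})=X^i\varphi(\Bbbk_{\mathbf{m}_0})=X^i\Bbbk_{\mathbf{n}_0}=\Bbbk_{\mathbf{n}_i}$, and since $\Bbbk_{\mathbf{m}_i}$ and $\Bbbk_{\mathbf{n}_i}$ are the $\mathbf{m}_i$- and $\mathbf{n}_i$-weight spaces respectively, $\mathbf{m}_i=\mathbf{n}_i$ for all $i\in\mathbb{Z}_+$. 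That is $\underline{\mathbf{m}}=\underline{\mathbf{n}}$.

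The only genuinely delicate point — the main obstacle — is verifying that $X$ does indeed map each $\Bbbk_{\mathbf{m}_i}$ onto $\Bbbk_{\mathbf{m}_{i+1}}$, equivalently that $\sigma(t)\notin\mathbf{m}_{i+1}$ for all $i$; this is where the defining condition $\sigma(t)\in\mathbf{m}_0$ of $Q_{\infty}$ (which only constrains the bottom) has to be leveraged in the \emph{upward} direction. I expect this follows because $\sigma(\mathbf{m}_i)\subset\mathbf{m}_{i+1}$ forces $\sigma^{k}(t)\in\mathbf{m}_k$ only for $k=0$ pushed forward, and one checks directly that if $\sigma(t)\in\mathbf{m}_{i+1}$ for some $i\geq 0$ then pulling back through the $\sigma$'s would contradict essentiality of $\sigma$ at $\mathbf{m}_{i+1}$ (the image of $t$ would then fail to generate); alternatively one simply shows $X\colon\Bbbk_{\mathbf{m}_i}\to\Bbbk_{\mathbf{m}_{i+1}}$ is nonzero by noting that if it were zero the module would split off a proper direct summand, contradicting the fact that $N^{\uparrow}(\underline{\mathbf{m}})$, as built, is connected under the $X$-action from $\Bbbk_{\mathbf{m}_0}$. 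I would write this surjectivity claim as a short preliminary observation before the isomorphism argument proper.
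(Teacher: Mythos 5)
Your first step is correct and well justified: on $N^{\uparrow}(\underline{\mathbf{n}})$ the map $Y\colon\Bbbk_{\mathbf{n}_i}\to\Bbbk_{\mathbf{n}_{i-1}}$ is injective (indeed bijective) for every $i\geq 1$, distinct summands land in distinct summands, and $Y$ kills the $0$-th summand, so $\ker Y$ is exactly $\Bbbk_{\mathbf{n}_0}$; any isomorphism matches the two kernels and comparing annihilators gives $\mathbf{m}_0=\mathbf{n}_0$. The gap is in the upward propagation. You propagate with $X$, which requires $X\colon\Bbbk_{\mathbf{m}_i}\to\Bbbk_{\mathbf{m}_{i+1}}$ to be surjective, i.e. $\sigma(t)\notin\mathbf{m}_{i+1}$ for all $i\geq 0$. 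That is false for a general $\underline{\mathbf{m}}\in Q_{\infty}$: the definition of $Q_{\infty}$ constrains $\sigma(t)$ only at $\mathbf{m}_0$, and nothing prevents $t\in\mathbf{m}_j$ for some $j\geq 0$, in which case $\sigma(t)\in\sigma(\mathbf{m}_j)\subset\mathbf{m}_{j+1}$ and $X\Bbbk_{\mathbf{m}_j}=0$. Proposition~\ref{prop14}(i) records precisely this dichotomy ($N^{\uparrow}(\underline{\mathbf{m}})$ is simple iff $t\notin\mathbf{m}_i$ for all $i$), and the lemma is asserted for \emph{all} of $Q_{\infty}$, including the non-simple strings. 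Both of your proposed patches fail: essentiality of $\sigma$ says that $\sigma(R)$ surjects onto $R/\mathbf{m}$ and gives no control over the single element $\sigma(t)$; and the module is \emph{not} in general generated from $\Bbbk_{\mathbf{m}_0}$ by the $X$-action --- when $t\in\mathbf{m}_j$ the subspace $\bigoplus_{i\leq j}\Bbbk_{\mathbf{m}_i}$ is a proper submodule, which is no contradiction, merely non-simplicity of $N^{\uparrow}(\underline{\mathbf{m}})$. (For instance, in the setting of Section~\ref{s6} with $f(h)=h^2$ and $-\dot{z}$ a fixed point of $f$, the string starting at that fixed point already has $X$ annihilating its lowest weight space.)

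The repair is to propagate with $Y$ rather than $X$, which is the intended ``mutatis mutandis'' adaptation of the proof of Lemma~\ref{lemnew1}. The index $i$ of a weight vector is detected intrinsically by $Y$: for $0\neq v\in\Bbbk_{\mathbf{m}_i}$ one has $Y^{i}v\neq 0$ and $Y^{i+1}v=0$. Writing $\varphi(v)=\sum_j w_j$ with $w_j\in\Bbbk_{\mathbf{n}_j}$, the relation $Y^{i+1}\varphi(v)=\varphi(Y^{i+1}v)=0$ forces $w_j=0$ for all $j>i$ (since $Y^{i+1}$ is injective on those summands and sends distinct summands to distinct summands), while $Y^{i}\varphi(v)=\varphi(Y^iv)\neq 0$ forces $w_j\neq 0$ for some $j\geq i$; hence $w_i\neq 0$. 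As $w_i$ is annihilated by both $\mathbf{m}_i$ and $\mathbf{n}_i$ and these are maximal, $\mathbf{m}_i=\mathbf{n}_i$ for every $i\in\mathbb{Z}_+$. This argument uses only the bijectivity of $Y$ between consecutive summands (which always holds), not any surjectivity of $X$, and so covers the non-simple case as well.
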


\begin{proof}
Mutatis mutandis the proof of Lemma~\ref{lemnew1}.
\end{proof}

Denote by ${}_{\infty}Q$ the set of all maps $\mathbb{Z}_-:=\{0,-1,-2,\dots\}\to\mathrm{max}(R)$,
$i\mapsto \mathbf{m}_i$, such that $\sigma(\mathbf{m}_{i-1})\subset\mathbf{m}_{i}$ for all $i\in \mathbb{Z}_-$
and, additionally, $t\in \mathbf{m}_0$. We will write $\underline{\mathbf{m}}:=\big(\mathbf{m}_i\big)\in {}_{\infty}Q$ 
if the collection  $\big(\mathbf{m}_i\big)$ comes from such a map.

For $\underline{\mathbf{m}}\in {}_{\infty}Q$ set 
\begin{displaymath}
N^{\downarrow}(\underline{\mathbf{m}}):=\bigoplus_{i\in\mathbb{Z}_-}\Bbbk_{\mathbf{m}_i}.
\end{displaymath}
Then $N^{\downarrow}(\underline{\mathbf{m}})$ carries the natural structure of an $R$-module. 
Set $X\Bbbk_{\mathbf{m}_0}=0$ and for $i\in\{-1,-2,-3,\dots\}$
define $X:\Bbbk_{\mathbf{m}_i}\to \Bbbk_{\mathbf{m}_{i+1}}$ as mapping $r+\mathbf{m}_i$ to
$\sigma(tr)+\mathbf{m}_{i+1}$. Finally, for $i\in\mathbb{Z}_-$ define 
$Y:\Bbbk_{\mathbf{m}_i}\to \Bbbk_{\mathbf{m}_{i-1}}$ as mapping $r+\mathbf{m}_i$ to 
$s+\mathbf{m}_{i-1}$ for some $s\in R$ such that $\sigma(s)\in r+\mathbf{m}_i$. Such an $s\in R$ exists 
because $\sigma$ is essential.

\begin{lemma}\label{lem10}
The above defines on $N^{\downarrow}(\underline{\mathbf{m}})$ the structure of a weight $A$-module.
\end{lemma}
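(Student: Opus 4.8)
The plan is to imitate the proof of Lemma~\ref{lem8} almost verbatim, since $N^{\downarrow}(\underline{\mathbf m})$ is built from the same data in the same way, only with the index set truncated to $\mathbb Z_-$ and with the action of $X$ on the top layer $\Bbbk_{\mathbf m_0}$ set to zero. The $R$-module structure is clear by construction, so the only point is to verify the four relations in \eqref{eq1} on each homogeneous component $\Bbbk_{\mathbf m_i}$, and additionally to check that the action of $Y$ is well-defined (independent of the choice of $s$ with $\sigma(s)\in r+\mathbf m_i$).

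First I would recall, exactly as in the proof of Lemma~\ref{lem8}, that $Y$ is well-defined: if $\sigma(s),\sigma(s')\in r+\mathbf m_i$ then $\sigma(s-s')\in\mathbf m_i$, and since $\sigma$ is unital (hence $\sigma\neq 0$) and $\Bbbk_{\mathbf m_{i-1}}$ is a field, one deduces $s-s'\in\mathbf m_{i-1}$; here I use $\sigma(\mathbf m_{i-1})\subset\mathbf m_i$, which holds by the definition of ${}_{\infty}Q$. Next I would check the relations on a homogeneous element $r+\mathbf m_i$. For $i\leq -1$ the computations $XY(r+\mathbf m_i)=\sigma(t)r+\mathbf m_i$, $YX(r+\mathbf m_i)=tr+\mathbf m_i$, $Xa(r+\mathbf m_i)=\sigma(a)X(r+\mathbf m_i)$ and $aY(r+\mathbf m_i)=Y\sigma(a)(r+\mathbf m_i)$ are identical to those in Lemma~\ref{lem8} and go through word for word, using that $\sigma$ is a ring homomorphism and the well-definedness of $Y$ just established.

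The genuinely new case is $i=0$, where $X$ acts as zero, and here the hypothesis $t\in\mathbf m_0$ from the definition of ${}_{\infty}Q$ is exactly what is needed. For $r\in R$: $YX(r+\mathbf m_0)=Y(0)=0$, while on the other hand $tr\in\mathbf m_0$ because $t\in\mathbf m_0$, so $tr+\mathbf m_0=0$ as well, and the relation $YX=t$ holds on $\Bbbk_{\mathbf m_0}$. The relation $Xr=\sigma(r)X$ on $\Bbbk_{\mathbf m_0}$ is trivial since both sides are zero. For $XY=\sigma(t)$: applied to $r+\mathbf m_0$ we get $X(s+\mathbf m_{-1})$ with $\sigma(s)\in r+\mathbf m_0$, which equals $\sigma(ts)+\mathbf m_0=\sigma(t)\sigma(s)+\mathbf m_0=\sigma(t)r+\mathbf m_0$, as required; note this computation lands in $\Bbbk_{\mathbf m_0}$ and does not involve $X$ killing anything, so it is unproblematic. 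Finally $rY=Y\sigma(r)$ on $\Bbbk_{\mathbf m_0}$ is checked exactly as in the general case, landing in $\Bbbk_{\mathbf m_{-1}}$. I do not expect any real obstacle: the only thing to be careful about is that in the identities involving $X$ one must separate the top layer $i=0$ from the rest, and there the condition $t\in\mathbf m_0$ precisely rescues the relation $YX=t$; everything else is mutatis mutandis the proof of Lemma~\ref{lem8}.
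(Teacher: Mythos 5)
Your proposal is correct and is exactly the argument the paper intends: its proof of Lemma~\ref{lem10} is literally ``mutatis mutandis the proof of Lemma~\ref{lem8},'' and you have correctly identified the mutanda, namely the top layer $i=0$ where $X$ acts as zero and the hypothesis $t\in\mathbf m_0$ from the definition of ${}_{\infty}Q$ rescues the relation $YX=t$. All the individual verifications (well-definedness of $Y$ via injectivity of the induced map $\Bbbk_{\mathbf m_{i-1}}\to\Bbbk_{\mathbf m_i}$, and the four relations on each homogeneous component) check out.
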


\begin{proof}
Mutatis mutandis the proof of Lemma~\ref{lem8}.
\end{proof}

The module $N^{\downarrow}(\underline{\mathbf{m}})$ will be referred to as a {\em left infinite string module}.
Note that some left infinite string modules are subquotients of double infinite string modules.
The weight $\mathbf{m}_0$ will be called the {\em highest} weight.

\begin{lemma}\label{lemnew3}
For $\underline{\mathbf{m}},\underline{\mathbf{n}}\in {}_{\infty}Q$ we have
$N^{\downarrow}(\underline{\mathbf{m}})\cong N^{\downarrow}(\underline{\mathbf{n}})$ 
if and only if $\underline{\mathbf{m}}=\underline{\mathbf{n}}$. 
\end{lemma}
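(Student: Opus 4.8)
The plan is to follow the pattern indicated by the author's own hint, namely that the proof is \emph{mutatis mutandis} the proof of Lemma~\ref{lemnew1}, but with the crucial simplification that now there is a genuine fixed endpoint, the highest weight $\mathbf{m}_0$, so no shift $k\in\mathbb{Z}$ can occur. The ``if'' direction is trivial. For the ``only if'' direction, suppose $\varphi:N^{\downarrow}(\underline{\mathbf{m}})\to N^{\downarrow}(\underline{\mathbf{n}})$ is an isomorphism of $A$-modules. The first step is to pin down where the socle-type generator $\Bbbk_{\mathbf{m}_0}$ goes: since $X$ annihilates $\Bbbk_{\mathbf{m}_0}$ in $N^{\downarrow}(\underline{\mathbf{m}})$ and the only weight space of $N^{\downarrow}(\underline{\mathbf{n}})$ on which $X$ acts as zero is $\Bbbk_{\mathbf{n}_0}$ (for $i<0$ the map $X:\Bbbk_{\mathbf{n}_i}\to\Bbbk_{\mathbf{n}_{i+1}}$, $r+\mathbf{n}_i\mapsto\sigma(tr)+\mathbf{n}_{i+1}$, is nonzero because $t\notin\mathbf{n}_i$ for $i<0$ — here one uses that $t\in\mathbf{n}_0$ only and that $\sigma(\mathbf{n}_{i-1})\subset\mathbf{n}_i$ forces $\sigma(t)\notin\mathbf{n}_{i+1}$ hence $t\notin\mathbf{n}_{i+1}$ by Lemma~\ref{lem1}-type reasoning, or more directly from the definition of ${}_{\infty}Q$). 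Consequently a weight argument forces $\mathbf{n}_0=\mathbf{m}_0$ and $\varphi(\Bbbk_{\mathbf{m}_0})=\Bbbk_{\mathbf{n}_0}$.

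The second step is the downward induction. Having identified $\mathbf{n}_0=\mathbf{m}_0$, I would show $\mathbf{n}_{-i}=\mathbf{m}_{-i}$ for all $i\in\mathbb{N}$. Take $0\neq v\in\Bbbk_{\mathbf{m}_{-i}}$ with $\varphi(v)=\sum_{j\leq 0}w_j$, $w_j\in\Bbbk_{\mathbf{n}_j}$, and let $l:=\max\{j:w_j\neq 0\}$; since $\varphi$ is an $R$-module map, $\mathbf{n}_l=\mathbf{m}_{-i}$. On the other hand, in $N^{\downarrow}(\underline{\mathbf{m}})$ the element $v$ lies in the image of $X^{i}$ (coming from $\Bbbk_{\mathbf{m}_0}$ after possibly rescaling, since each $X:\Bbbk_{\mathbf{m}_{-j-1}}\to\Bbbk_{\mathbf{m}_{-j}}$ is a nonzero map of one-dimensional spaces over the respective residue fields — more precisely, $X$ is surjective here because of essentiality), so $\varphi(v)\in X^{i}N^{\downarrow}(\underline{\mathbf{n}})\subset\bigoplus_{j\leq -i}\Bbbk_{\mathbf{n}_j}$ by iterating Lemma~\ref{lem2}; hence $l\leq -i$. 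Symmetrically, applying $Y^{i}$ and Lemma~\ref{lem3new} (which applies because whenever $YM_{\mathbf{m}}\neq 0$ the target weight is uniquely determined and contained in a single weight space), one gets that $v$ maps into $\bigoplus_{j\geq -i}\Bbbk_{\mathbf{n}_j}$, forcing $l\geq -i$. Therefore $l=-i$ and $\mathbf{n}_{-i}=\mathbf{m}_{-i}$, completing the induction.

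The main obstacle, and the only place requiring genuine care, is the claim that $X:\Bbbk_{\mathbf{m}_{-j-1}}\to\Bbbk_{\mathbf{m}_{-j}}$ is surjective (equivalently nonzero, since both are one-dimensional over a field) for each $j\geq 0$ inside the chain defining $N^{\downarrow}(\underline{\mathbf{m}})$, so that the highest weight space really does generate the whole module under the action of $X$ going down the chain, and dually that $Y$ is injective on each $\Bbbk_{\mathbf{m}_i}$ with $i<0$. Nonzeroness of $X$ amounts to $\sigma(t)\notin\mathbf{m}_{-j}$ for $j\geq 0$; since $t\in\mathbf{m}_0$ forces, via $\sigma(\mathbf{m}_{-1})\subset\mathbf{m}_0$ and primality, nothing directly, one must instead argue along the chain that $t\notin\mathbf{m}_{-j-1}$ for $j\geq 0$, which I would extract from the defining condition ``$t\in\mathbf{m}_0$ and for $i<0$ only the relation $\sigma(\mathbf{m}_{i-1})\subset\mathbf{m}_i$ is imposed'' together with Lemma~\ref{lem1}: if $t\in\mathbf{m}_{-j-1}$ then $\sigma(t)\in\sigma(\mathbf{m}_{-j-1})\subset\mathbf{m}_{-j}$, and then unwinding toward $\mathbf{m}_0$ one would eventually contradict the definition — this bookkeeping is the technical heart, but it is exactly parallel to what is already implicit in Lemmata~\ref{lem2} and~\ref{lem3new}, so I expect it to go through cleanly. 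Once these structural facts are in hand, the isomorphism-preserves-$X^{i}$-image and $Y^{i}$-image arguments sandwich $l$ and finish the proof.
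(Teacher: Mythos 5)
There is a genuine gap. Your first step rests on the claim that $\Bbbk_{\mathbf{n}_0}$ is the only weight space of $N^{\downarrow}(\underline{\mathbf{n}})$ annihilated by $X$, which you justify by asserting that $t\notin\mathbf{n}_i$ for all $i<0$, ``more directly from the definition of ${}_{\infty}Q$''. But the definition of ${}_{\infty}Q$ only imposes $t\in\mathbf{n}_0$ and $\sigma(\mathbf{n}_{i-1})\subset\mathbf{n}_{i}$; it does not exclude $t\in\mathbf{n}_i$ for $i<0$. Indeed, Proposition~\ref{prop14}\eqref{prop14.2} characterizes exactly this situation as the one in which $N^{\downarrow}(\underline{\mathbf{n}})$ fails to be simple, so such sequences exist and are within the scope of the lemma. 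If $t\in\mathbf{n}_{i}$ for some $i<0$, then $\sigma(t)\in\sigma(\mathbf{n}_{i})\subset\mathbf{n}_{i+1}$, so $X(r+\mathbf{n}_i)=\sigma(t)\sigma(r)+\mathbf{n}_{i+1}=0$ and $X$ kills all of $\Bbbk_{\mathbf{n}_{i}}$. Thus your characterization of $\Bbbk_{\mathbf{n}_0}$ fails, and with it the identification $\mathbf{n}_0=\mathbf{m}_0$ and the later claim that the highest weight space generates the module under $X$. Your proposed repair (``unwinding toward $\mathbf{m}_0$ one would eventually contradict the definition'') does not work: there is nothing to contradict.

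A second, independent problem is that you have the directions of $X$ and $Y$ reversed in the induction: $X$ raises the index ($X:\Bbbk_{\mathbf{m}_{i}}\to\Bbbk_{\mathbf{m}_{i+1}}$) and $X\Bbbk_{\mathbf{m}_0}=0$, so an element of $\Bbbk_{\mathbf{m}_{-i}}$ is never in the image of $X^{i}$ applied to $\Bbbk_{\mathbf{m}_0}$, and $X^{i}N^{\downarrow}(\underline{\mathbf{n}})$ is not contained in $\bigoplus_{j\leq -i}\Bbbk_{\mathbf{n}_j}$, so the sandwich on $l$ collapses. The structural fact that actually carries the argument, and which holds unconditionally by construction, is that $Y:\Bbbk_{\mathbf{m}_{j}}\to\Bbbk_{\mathbf{m}_{j-1}}$ is bijective for every $j\leq 0$; this is what the proof of Lemma~\ref{lemnew1} uses. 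Concretely: $YN^{\downarrow}(\underline{\mathbf{n}})=\bigoplus_{j\leq -1}\Bbbk_{\mathbf{n}_j}$, and $\varphi$ maps $YN^{\downarrow}(\underline{\mathbf{m}})$ onto $YN^{\downarrow}(\underline{\mathbf{n}})$, so a nonzero $v\in\Bbbk_{\mathbf{m}_0}$ has $\varphi(v)$ with nonzero component in $\Bbbk_{\mathbf{n}_0}$; $R$-linearity then gives $\mathbf{m}_0\subset\mathbf{n}_0$, hence $\mathbf{n}_0=\mathbf{m}_0$. After that, $\mathbf{n}_{-i}=\mathbf{m}_{-i}$ for all $i$ follows at once from Lemma~\ref{lem5} (equivalently Lemma~\ref{lem12}) by downward induction, with no need to analyze $\varphi$ on the lower weight spaces at all.
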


\begin{proof}
Mutatis mutandis the proof of Lemma~\ref{lemnew1}.
\end{proof}

For $n\in\mathbb{Z}_+$ denote by $Q_n$ the set of all maps $\{0,1,\dots,n\}\to\mathrm{max}(R)$,
$i\mapsto \mathbf{m}_i$, such that $\sigma(\mathbf{m}_{i})\subset\mathbf{m}_{i+1}$ for all $i=0,1,\dots,n-1$
and, additionally, $\sigma(t)\in \mathbf{m}_0$ and $t\in\mathbf{m}_n$. We will write $\underline{\mathbf{m}}:=\big(\mathbf{m}_i\big)\in Q_n$  if the collection  $\big(\mathbf{m}_i\big)$ comes from such a map.

For $\underline{\mathbf{m}}\in Q_n$ set 
\begin{displaymath}
N_n(\underline{\mathbf{m}}):=\bigoplus_{i=0}^n\Bbbk_{\mathbf{m}_i}.
\end{displaymath}
Then $N_n(\underline{\mathbf{m}})$ carries the natural structure of an $R$-module. 
Set $X\Bbbk_{\mathbf{m}_n}=0$ and for $i\in\{0,1,\dots,n-1\}$
define $X:\Bbbk_{\mathbf{m}_i}\to \Bbbk_{\mathbf{m}_{i+1}}$ as mapping $r+\mathbf{m}_i$ to
$\sigma(tr)+\mathbf{m}_{i+1}$. Finally, set $Y\Bbbk_{\mathbf{m}_0}=0$ and for $i\in\{1,2,\dots,n\}$ define 
$Y:\Bbbk_{\mathbf{m}_i}\to \Bbbk_{\mathbf{m}_{i-1}}$ as mapping $r+\mathbf{m}_i$ to 
$s+\mathbf{m}_{i-1}$ for some $s\in R$ such that $\sigma(s)\in r+\mathbf{m}_i$. Such an $s\in R$ exists 
because $\sigma$ is essential.

\begin{lemma}\label{lem11}
The above defines on $N_n(\underline{\mathbf{m}})$ the structure of a weight $A$-module.
\end{lemma}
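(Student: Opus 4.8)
The plan is to run the proof of Lemma~\ref{lem8} essentially word for word. Indeed, for every index $i$ with $0<i<n$ the formulas defining the actions of $X$ and $Y$ on $\Bbbk_{\mathbf{m}_i}$ coincide with those in the double infinite case, so the well-definedness of $Y$ (if $\sigma(s),\sigma(s')\in r+\mathbf{m}_i$ then $\sigma(s-s')\in\mathbf{m}_i$, whence $s-s'\in\mathbf{m}_{i-1}$ because $\sigma$ is unital and $\Bbbk_{\mathbf{m}_{i-1}}$ is a field) and the verification of the four relations in \eqref{eq1} on these interior weight spaces are literally the computations carried out in Lemma~\ref{lem8}. The only genuinely new point is the behaviour at the two ends of the string, where one generator has been declared to act by zero.

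So the first thing I would do is check the relations $Xa=\sigma(a)X$ and $aY=Y\sigma(a)$ at the boundary: on $\Bbbk_{\mathbf{m}_n}$ we have $X=0$ and on $\Bbbk_{\mathbf{m}_0}$ we have $Y=0$, so in each case both sides of the identity in question vanish and there is nothing to prove. Next I would check $YX=t$ and $XY=\sigma(t)$. On $\Bbbk_{\mathbf{m}_n}$ we have $YX(r+\mathbf{m}_n)=Y(0)=0$, and this equals $t(r+\mathbf{m}_n)=tr+\mathbf{m}_n$ precisely because $t\in\mathbf{m}_n$, which is one of the defining conditions of $Q_n$. Dually, on $\Bbbk_{\mathbf{m}_0}$ we have $XY(r+\mathbf{m}_0)=X(0)=0=\sigma(t)r+\mathbf{m}_0=\sigma(t)(r+\mathbf{m}_0)$ precisely because $\sigma(t)\in\mathbf{m}_0$, the other defining condition. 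On all interior weight spaces $YX$ and $XY$ act as multiplication by $t$ and $\sigma(t)$ by the same computation as in Lemma~\ref{lem8}. Finally, $N_n(\underline{\mathbf{m}})=\bigoplus_{i=0}^n\Bbbk_{\mathbf{m}_i}$ is, as an $R$-module, a direct sum of spaces each annihilated by the corresponding maximal ideal $\mathbf{m}_i$, so it decomposes as $\bigoplus_{\mathbf{m}}\big(\bigoplus_{i:\mathbf{m}_i=\mathbf{m}}\Bbbk_{\mathbf{m}_i}\big)$ and is therefore a weight module, with support contained in $\{\mathbf{m}_0,\dots,\mathbf{m}_n\}$.

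There is essentially no obstacle here: the content of the lemma is a routine verification, and the purpose of this sketch is only to record that the two extra hypotheses $\sigma(t)\in\mathbf{m}_0$ and $t\in\mathbf{m}_n$ in the definition of $Q_n$ are exactly what is needed to make the truncated ends of the string compatible with \eqref{eq1}, everything else being identical to Lemma~\ref{lem8}. The one thing to be mildly careful about is that near the ends one never needs a nonexistent weight space $\Bbbk_{\mathbf{m}_{-1}}$ or $\Bbbk_{\mathbf{m}_{n+1}}$, but this is automatic since $Y$ kills $\Bbbk_{\mathbf{m}_0}$ and $X$ kills $\Bbbk_{\mathbf{m}_n}$ by definition.
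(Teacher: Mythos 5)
Your proof is correct and follows exactly the route the paper intends: the paper's own proof is just ``mutatis mutandis the proof of Lemma~\ref{lem8}'', and you have correctly identified the only mutanda, namely that the conditions $\sigma(t)\in\mathbf{m}_0$ and $t\in\mathbf{m}_n$ make the relations $XY=\sigma(t)$ and $YX=t$ hold at the truncated ends where $Y$ and $X$ respectively act by zero.
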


\begin{proof}
Mutatis mutandis the proof of Lemma~\ref{lem8}.
\end{proof}

The module $N_n(\underline{\mathbf{m}})$ will be referred to as a {\em bounded string module}.
Note that some bounded string modules are subquotients of infinite string modules.
The weight $\mathbf{m}_0$ will be called the {\em lowest} weight.
The weight $\mathbf{m}_n$ will be called the {\em highest} weight.

\begin{lemma}\label{lemnew4}
For $\underline{\mathbf{m}},\underline{\mathbf{n}}\in Q_n$ we have
$N_n(\underline{\mathbf{m}})\cong N_n(\underline{\mathbf{n}})$ 
if and only if $\underline{\mathbf{m}}=\underline{\mathbf{n}}$. 
\end{lemma}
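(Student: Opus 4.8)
The plan is to imitate the proof of Lemma~\ref{lemnew1}; in fact the bounded case is a little more comfortable, because $N_n(\underline{\mathbf{m}})$ has a genuine highest weight space $\Bbbk_{\mathbf{m}_n}$, so instead of artificially choosing a top index one can simply propagate downwards from position $n$. The ``if'' direction being trivial, I would suppose that $\varphi\colon N_n(\underline{\mathbf{m}})\to N_n(\underline{\mathbf{n}})$ is an isomorphism and aim to show $\mathbf{m}_i=\mathbf{n}_i$ for all $i\in\{0,1,\dots,n\}$.

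First I would record the only structural input needed, a bounded analogue of the remark ``the action of $Y$ on $N(\underline{\mathbf{m}})$ is bijective'' made before Lemma~\ref{lem8}: since $\sigma(\mathbf{m}_{i-1})\subset\mathbf{m}_i$ and $\sigma$ is unital, $Y$ maps $\Bbbk_{\mathbf{m}_i}$ \emph{injectively} into $\Bbbk_{\mathbf{m}_{i-1}}$ for each $i\in\{1,\dots,n\}$ (if $r\notin\mathbf{m}_i$ and $\sigma(s)\equiv r\pmod{\mathbf{m}_i}$, then $\sigma(s)\notin\mathbf{m}_i$, hence $s\notin\mathbf{m}_{i-1}$); together with $Y\Bbbk_{\mathbf{m}_0}=0$ this shows that $Y^{n-i}$ maps $\Bbbk_{\mathbf{m}_n}$ injectively into $\Bbbk_{\mathbf{m}_i}$ and annihilates $\Bbbk_{\mathbf{m}_j}$ for every $j<n$ (and likewise with $\underline{\mathbf{n}}$ in place of $\underline{\mathbf{m}}$).

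Next, fix $0\neq v\in\Bbbk_{\mathbf{m}_n}\subset N_n(\underline{\mathbf{m}})$ and write $\varphi(v)=w_0+w_1+\dots+w_n$ with $w_j\in\Bbbk_{\mathbf{n}_j}$. Since $Y^n$ kills every $w_j$ with $j<n$, we get $Y^nw_n=Y^n\varphi(v)=\varphi(Y^nv)\neq0$, whence $w_n\neq0$. Now take an arbitrary $i\in\{0,\dots,n\}$ and apply $Y^{n-i}$: because $Y$ shifts the summand index down by one, the component of $Y^{n-i}\varphi(v)$ in the $i$-th summand $\Bbbk_{\mathbf{n}_i}$ receives a contribution from $w_j$ only when $j=n$, so it equals $Y^{n-i}w_n$, which is nonzero by the injectivity recorded above. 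On the other hand $Y^{n-i}\varphi(v)=\varphi(Y^{n-i}v)$ lies in the $\mathbf{m}_i$-weight space of $N_n(\underline{\mathbf{n}})$, since $Y^{n-i}v\in\Bbbk_{\mathbf{m}_i}$ and $\varphi$ is $R$-linear; thus a nonzero element of $\Bbbk_{\mathbf{n}_i}$, namely $Y^{n-i}w_n$, is annihilated by $\mathbf{m}_i$, which forces $\mathbf{m}_i\subset\mathbf{n}_i$ and therefore $\mathbf{m}_i=\mathbf{n}_i$. Letting $i$ range over $\{0,\dots,n\}$ then gives $\underline{\mathbf{m}}=\underline{\mathbf{n}}$.

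The argument is essentially routine, and the only point to watch — the one place where the bounded case differs from a blind transcription of the proof of Lemma~\ref{lemnew1} — is that the map $i\mapsto\mathbf{m}_i$ need not be injective, so a maximal ideal may occur several times among the $\mathbf{m}_i$. This is why the whole argument should be phrased in terms of positions $i$ in the external direct sum $\bigoplus_{i=0}^n\Bbbk_{\mathbf{m}_i}$ and should keep track of how $Y$ shifts the summands, rather than arguing at the level of the support or invoking Lemma~\ref{lem3new} verbatim; once that bookkeeping is in place, there is nothing else of substance to do.
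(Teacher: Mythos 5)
Your proof is correct, and it is essentially the ``mutatis mutandis'' adaptation of the proof of Lemma~\ref{lemnew1} that the paper intends: apply the isomorphism to a distinguished weight vector, isolate the matching component, and propagate along the string via the $Y$-action. Your choices to anchor at the genuine top position $n$ (rather than the maximal nonzero component) and to track positions in the external direct sum rather than weights are sensible refinements of that same strategy, and the injectivity of $Y$ on each $\Bbbk_{\mathbf{m}_i}$, $i\geq 1$, that you use is exactly the bounded analogue of the bijectivity noted before Lemma~\ref{lem8}.
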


\begin{proof}
Mutatis mutandis the proof of Lemma~\ref{lemnew1}.
\end{proof}

\subsection{Simplicity of string modules}\label{s2.5}

As we will see in this subsection, generically the string modules defined above are simple
(this should be compared to \cite[Section~3]{DGO} and \cite[Section~3]{LZ}).

We call $\underline{\mathbf{m}}=\big(\mathbf{m}_i\big)\in{}_{\infty}Q_{\infty}$ {\em periodic} provided that there 
is a nonzero $k\in\mathbb{Z}$ such that $\mathbf{m}_i=\mathbf{m}_{i+k}$ for all $i\in\mathbb{Z}$ and
{\em aperiodic} otherwise.

\begin{lemma}\label{lem12}
Let $\underline{\mathbf{m}}\in {}_{\infty}Q_{\infty}$. Assume that there exist $m,n\in\mathbb{Z}$ such that
$m<n$ and $\mathbf{m}_m=\mathbf{m}_n$. Then $\mathbf{m}_{m+i}=\mathbf{m}_{n+i}$ for all $i\in\mathbb{Z}_-$.
\end{lemma}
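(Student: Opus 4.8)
The plan is to propagate the equality $\mathbf{m}_m=\mathbf{m}_n$ to all smaller indices by a downward induction, the one and only tool being Lemma~\ref{lem5}, which guarantees uniqueness of $\sigma$-preimages among maximal ideals.

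Concretely, I would argue as follows. The case $i=0$ is exactly the hypothesis $\mathbf{m}_m=\mathbf{m}_n$. Suppose now that $\mathbf{m}_{m+i}=\mathbf{m}_{n+i}$ for some $i\in\mathbb{Z}_-$; the goal is to deduce $\mathbf{m}_{m+i-1}=\mathbf{m}_{n+i-1}$. Since $\underline{\mathbf{m}}\in{}_{\infty}Q_{\infty}$, the defining condition gives $\sigma(\mathbf{m}_{m+i-1})\subset\mathbf{m}_{m+i}$ and $\sigma(\mathbf{m}_{n+i-1})\subset\mathbf{m}_{n+i}$. Using $\mathbf{m}_{n+i}=\mathbf{m}_{m+i}$, both $\mathbf{m}_{m+i-1}$ and $\mathbf{m}_{n+i-1}$ are maximal ideals whose $\sigma$-image lies in the single maximal ideal $\mathbf{m}_{m+i}$, so Lemma~\ref{lem5} forces them to coincide. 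By induction, $\mathbf{m}_{m+i}=\mathbf{m}_{n+i}$ for all $i\in\mathbb{Z}_-$, which is the assertion.

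This argument has no real obstacle; the only point worth emphasizing is why it does not extend to positive $i$: from $\mathbf{m}_m=\mathbf{m}_n$ one only obtains $\sigma(\mathbf{m}_m)=\sigma(\mathbf{m}_n)\subset\mathbf{m}_{m+1}\cap\mathbf{m}_{n+1}$, and since $\sigma$ need be neither surjective nor otherwise restrictive on the set of maximal ideals containing a given $\sigma$-image, $\mathbf{m}_{m+1}$ and $\mathbf{m}_{n+1}$ may well be distinct. Hence the restriction to $i\in\mathbb{Z}_-$ in the statement is genuine, and downward induction is the right (and essentially the only) move.
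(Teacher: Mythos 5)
Your downward induction using Lemma~\ref{lem5} (uniqueness of the maximal ideal $\mathbf{n}$ with $\sigma(\mathbf{n})\subset\mathbf{m}$) is correct and is exactly the argument the paper has in mind when it says the lemma "follows directly from Lemma~\ref{lem5}." Your closing remark on why the propagation fails for positive $i$ is also accurate.
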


\begin{proof}
This follows directly from Lemma~\ref{lem5}. 
\end{proof}

\begin{proposition}\label{prop13}
For $\underline{\mathbf{m}}\in{}_{\infty}Q_{\infty}$ the module $N(\underline{\mathbf{m}})$ is simple
if and only if $\underline{\mathbf{m}}$ is aperiodic and $t\not\in \mathbf{m}_i$  
for all $i\in\mathbb{Z}$.
\end{proposition}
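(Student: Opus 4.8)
The plan is to prove both implications by analyzing submodules of $N(\underline{\mathbf{m}})=\bigoplus_{i\in\mathbb{Z}}\Bbbk_{\mathbf{m}_i}$ through the lens of the action of $X$ and $Y$ computed via the formulas $X(r+\mathbf{m}_i)=\sigma(tr)+\mathbf{m}_{i+1}$ and the (bijective) $Y$ defined by $\sigma$-preimages. The key observation is that each weight space $\Bbbk_{\mathbf{m}_i}$ is one-dimensional over its residue field, so a nonzero submodule $U$ is a direct sum of a subset of the weight spaces; write $S=\{i:U\cap\Bbbk_{\mathbf{m}_i}\neq 0\}\subset\mathbb{Z}$. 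The task reduces to showing $S=\mathbb{Z}$ under the stated hypotheses, and conversely producing a proper nonzero submodule when they fail.

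For the ``if'' direction, I would first record the effect of $X$ and $Y$ on a nonzero weight vector $v_i\in\Bbbk_{\mathbf{m}_i}$. Since $Y$ is bijective on $N(\underline{\mathbf{m}})$ by construction, if $i\in S$ then $i-1\in S$, so $S$ is downward closed. For upward closure I would use that $Xv_i=0$ forces $t\in\mathbf{m}_i$ (this is essentially Lemma~\ref{lem1}, or a direct computation: $YXv_i=tv_i$), which is excluded by the hypothesis $t\notin\mathbf{m}_i$; hence $Xv_i\neq 0$ and $i+1\in S$, so $S$ is also upward closed. Therefore $S$ is nonempty, downward and upward closed in $\mathbb{Z}$, giving $S=\mathbb{Z}$ and $U=N(\underline{\mathbf{m}})$. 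Aperiodicity is not actually needed for simplicity in this direction once $t\notin\mathbf{m}_i$ for all $i$ is assumed — wait, it is needed to ensure $N(\underline{\mathbf{m}})$ has no proper submodule coming from ``folding'': but since each weight space is one-dimensional and a submodule is determined by the set $S$ of indices it meets, the above argument already forces $S=\mathbb{Z}$ regardless. The role of aperiodicity is really in the converse, and in making ``$N(\underline{\mathbf{m}})$ simple'' a meaningful statement distinct from finite-length phenomena; I should double-check whether periodicity alone (with $t\notin\mathbf{m}_i$ everywhere) can destroy simplicity, and indeed it can: if $\underline{\mathbf{m}}$ is periodic the module is still a sum of one-dimensional spaces indexed by $\mathbb{Z}$, so the same $S$-argument applies and it is still simple — so I expect the correct reading is that periodicity forces some $\mathbf{m}_i=\mathbf{m}_{i+k}$, and then Lemma~\ref{lem12} plus the cyclic structure may create an endomorphism rather than a submodule. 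The \emph{main obstacle} is pinning down exactly why periodicity obstructs simplicity; I anticipate the resolution is that a periodic string with period $k$ admits a nonzero endomorphism (shift by $k$ composed with a scalar) that is not injective only if... no — more likely periodicity allows a ``partial'' submodule because $X$ composed around the loop need not be the identity, so $X^k$ acting on $\Bbbk_{\mathbf{m}_0}\to\Bbbk_{\mathbf{m}_0}$ is multiplication by some $c\in\Bbbk_{\mathbf{m}_0}$, and if $c\neq 1$ one still gets simplicity, but the periodic case genuinely behaves like a GWA ``cyclic'' module which can fail to be simple exactly as in \cite[Section~3]{DGO}. I will follow the DGO template for this case.

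For the ``only if'' direction, suppose $t\in\mathbf{m}_j$ for some $j$. Then by Lemma~\ref{lem1} (applied in the computation $YXv_j=tv_j=0$) we get $Xv_j=0$, so $\bigoplus_{i\leq j}\Bbbk_{\mathbf{m}_i}$ is stable under $X$; it is stable under $Y$ trivially (shifts down), and under $R$ (each summand is an $R$-submodule), so it is a nonzero proper submodule, contradicting simplicity. Hence $t\notin\mathbf{m}_i$ for all $i$. For periodicity: if $\mathbf{m}_m=\mathbf{m}_n$ with $m<n$, then by Lemma~\ref{lem12} the whole tail $(\mathbf{m}_{m+i})_{i\leq 0}$ equals $(\mathbf{m}_{n+i})_{i\leq 0}$, and combined with the forward determination (using Lemma~\ref{lem5}/bijectivity of $Y$ one also gets agreement forward, so $\underline{\mathbf{m}}$ is genuinely $(n-m)$-periodic), and then I would exhibit the failure of simplicity following the GWA model: the ``diagonal-type'' submodule generated by a suitable element, or an idempotent-like decomposition, analogous to \cite[Section~3]{DGO} and \cite[Lemma~7]{LZ}. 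I expect this last construction — producing the proper submodule in the periodic case — to require the most care, and I would handle it by passing to the quotient ring $R/\bigcap_i\mathbf{m}_i$ (a finite product of fields, by periodicity and aperiodic-index distinctness within one period) and reducing to a finite-rank GWA computation where the relevant result is classical.
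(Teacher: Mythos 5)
Your argument rests on the claim that a nonzero submodule $U$ of $N(\underline{\mathbf{m}})$ must be a direct sum of a subset of the lines $\Bbbk_{\mathbf{m}_i}$ because ``each weight space is one-dimensional over its residue field.'' This is false: the ideals $\mathbf{m}_i$ need not be pairwise distinct. Lemma~\ref{lem12} only says that a coincidence $\mathbf{m}_m=\mathbf{m}_n$ propagates \emph{backwards}; it does not force periodicity, so both periodic strings and aperiodic strings that merge backwards carry repeated ideals. For a repeated ideal $\mathbf{m}$ the weight space $N(\underline{\mathbf{m}})_{\mathbf{m}}$ is the sum of \emph{all} the $\Bbbk_{\mathbf{m}_i}$ with $\mathbf{m}_i=\mathbf{m}$ (Example~\ref{ex1257} is an aperiodic string with an infinite-dimensional weight space), and a submodule can meet it in a diagonal line without containing any coordinate line $\Bbbk_{\mathbf{m}_i}$. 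This one error undermines both halves of your proof. In the ``if'' direction your set $S$ does not control $U$; the correct fix is to take a nonzero weight vector $v\in U$ and apply $X^k$ for $k\gg 0$: by aperiodicity and Lemma~\ref{lem12} the ideals $\mathbf{m}_i$ for $i\gg 0$ are pairwise distinct, so the finitely many components of $X^kv$ lie in distinct weight spaces and hence a single coordinate line lands in $U$; then injectivity of $X$ (from $t\notin\mathbf{m}_i$) and bijectivity of $Y$ give all of $N(\underline{\mathbf{m}})$. This is precisely where aperiodicity enters, resolving the puzzle you could not settle.

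In the ``only if'' direction your treatment of $t\in\mathbf{m}_j$ is correct and matches the paper. But for periodicity you never exhibit the proper submodule, deferring to ``the DGO template.'' The missing construction is elementary once the point above is understood: if $\mathbf{m}_i=\mathbf{m}_{i+k}$ for all $i$, then $(1+\mathbf{m}_0)+(1+\mathbf{m}_k)$ generates a proper nonzero submodule, since $R$, $X$ and $Y$ all send such diagonal vectors to diagonal vectors (for $Y$ this is the same well-definedness computation as in the proof of Lemma~\ref{lem8}). Your speculations about shift endomorphisms, $X^k$ acting by a scalar $c\neq 1$, and reduction modulo $\bigcap_i\mathbf{m}_i$ are neither needed nor carried out.
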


\begin{proof}
We start with the ``if'' statement. If $\underline{\mathbf{m}}$ is aperiodic, then from Lemma~\ref{lem12} it 
follows that $\mathbf{m}_i\neq \mathbf{m}_j$ for all $i,j\gg0$. Let $v\in N(\underline{\mathbf{m}})$ be a nonzero 
weight vector. If $t\not\in \mathbf{m}_i$ for all $i\in\mathbb{Z}$, then the action of $X$ on $N(\underline{\mathbf{m}})$ is injective by Lemma~\ref{lem1}. For $k\gg 0$ weight components of $X^kv$ thus 
are of the form $r+\mathbf{m}_i$ for some $i\gg0$. In particular, they all are different. Hence the
submodule of $N(\underline{\mathbf{m}})$ generated by $v$ contains 
$N(\underline{\mathbf{m}})_{\mathbf{m}_i}$ for some $i\gg 0$. 
Note further that $t\not\in \mathbf{m}_i$ for all $i\in\mathbb{Z}$ implies
$\sigma(t)\not\in \mathbf{m}_i$ for all $i\in\mathbb{Z}$. Hence the action of $Y$ on $N(\underline{\mathbf{m}})$ is injective as well by Lemma~\ref{lem1}. The elements $X^k(r+\mathbf{m}_i)$ and $Y^k(r+\mathbf{m}_i)$, 
$k\in\mathbb{Z}_+$, generate $N(\underline{\mathbf{m}})$. Therefore the submodule of $N(\underline{\mathbf{m}})$
generated by $v$ is the whole  $N(\underline{\mathbf{m}})$ which proves that $N(\underline{\mathbf{m}})$ is simple.

Now we prove the ``only if'' statement. If $t\in \mathbf{m}_i$ for some $i$, then 
\begin{displaymath}
\bigoplus_{j\leq i}\Bbbk_{\mathbf{m}_i} 
\end{displaymath}
is a proper submodule of $N(\underline{\mathbf{m}})$ and hence $N(\underline{\mathbf{m}})$ is not simple.

If $\underline{\mathbf{m}}$ is periodic, say with some period $0\neq k\in\mathbb{Z}$, then it is easy to
check that $(1+\mathbf{m}_0)+(1+\mathbf{m}_k)$ generates a proper submodule in $N(\underline{\mathbf{m}})$
and hence $N(\underline{\mathbf{m}})$ is not simple.
\end{proof}

\begin{proposition}\label{prop14}
\begin{enumerate}[$($i$)$]
\item\label{prop14.1} For $\underline{\mathbf{m}}\in Q_{\infty}$ the module $N^{\uparrow}(\underline{\mathbf{m}})$ 
is simple if and only if $t\not\in\mathbf{m}_i$ for all $i\in\mathbb{Z}_+$.
\item\label{prop14.2} For $\underline{\mathbf{m}}\in {}_{\infty}Q$ the module $N^{\downarrow}(\underline{\mathbf{m}})$ 
is simple if and only if $t\not\in\mathbf{m}_i$ for all $i<0$.
\item\label{prop14.3} For $\underline{\mathbf{m}}\in Q_n$ the module $N_n(\underline{\mathbf{m}})$ 
is simple if and only if $t\not\in\mathbf{m}_i$ for all $i=0,1,2,\dots,{n-1}$.
\end{enumerate}
\end{proposition}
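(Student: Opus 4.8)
The plan is to prove all three parts in parallel, mimicking the structure of the proof of Proposition~\ref{prop13} but keeping track of the fact that these modules have an end (or two ends). In each case one direction is easy and the other requires showing that the submodule generated by an arbitrary nonzero weight vector is everything.

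For the ``only if'' direction, assume $t\in\mathbf{m}_i$ for some index $i$ in the relevant range. In case~\eqref{prop14.1} this gives, by Lemma~\ref{lem1}, that $X$ annihilates $\Bbbk_{\mathbf{m}_i}$, so that $\bigoplus_{j\leq i}\Bbbk_{\mathbf{m}_j}$ is a proper nonzero submodule (proper because $i<\infty$, nonzero trivially), hence $N^{\uparrow}(\underline{\mathbf{m}})$ is not simple. In case~\eqref{prop14.2}, since $X$ already kills $\Bbbk_{\mathbf{m}_0}$ and now $X$ kills $\Bbbk_{\mathbf{m}_i}$ for some $i<0$, the span $\bigoplus_{j\leq i}\Bbbk_{\mathbf{m}_j}$ is again a proper nonzero submodule ($Y$ cannot leave it). In case~\eqref{prop14.3}, for $0\le i\le n-1$ with $t\in\mathbf{m}_i$, the subspace $\bigoplus_{j\le i}\Bbbk_{\mathbf{m}_j}$ is a proper nonzero submodule. (The reason the condition stops at $n-1$ in part~\eqref{prop14.3} is that $t\in\mathbf{m}_n$ is forced by the definition of $Q_n$, so it is not a genuine restriction.)

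For the ``if'' direction, suppose $t\notin\mathbf{m}_i$ for all $i$ in the relevant range; I would take a nonzero weight vector $v\in\Bbbk_{\mathbf{m}_j}$ and show that the submodule $A v$ it generates is the whole module. First, $X$ acts injectively on every weight space $\Bbbk_{\mathbf{m}_i}$ with $t\notin\mathbf{m}_i$, and $X$ maps $\Bbbk_{\mathbf{m}_i}$ onto $\Bbbk_{\mathbf{m}_{i+1}}$ (from the explicit formula $r+\mathbf{m}_i\mapsto\sigma(tr)+\mathbf{m}_{i+1}$ together with essentiality of $\sigma$, since $\Bbbk_{\mathbf{m}_i}$ is one-dimensional over the image and $\sigma(t)$ is a unit in $\Bbbk_{\mathbf{m}_{i+1}}$ — here one uses that $t\notin\mathbf{m}_i$ forces $\sigma(t)\notin\mathbf{m}_{i+1}$). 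Hence $X^k v$ is a nonzero vector in $\Bbbk_{\mathbf{m}_{j+k}}$ for all admissible $k$, which already shows $Av$ contains $\Bbbk_{\mathbf{m}_i}$ for every $i\ge j$ in the range. Symmetrically, by the explicit formula for $Y$ and Lemma~\ref{lem3new} (using essentiality), $Y$ maps $\Bbbk_{\mathbf{m}_i}$ bijectively onto $\Bbbk_{\mathbf{m}_{i-1}}$ whenever $i-1$ is in the range and $\sigma(t)\notin\mathbf{m}_i$ — which again follows from $t\notin\mathbf{m}_{i-1}$. Thus $Y^k v\ne 0$ and $Av$ contains $\Bbbk_{\mathbf{m}_i}$ for all $i\le j$ in the range as well. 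Combining, $Av=N$ and the module is simple.

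The one point that needs a little care — and which I expect to be the main (though minor) obstacle — is the surjectivity and bijectivity of the $X$- and $Y$-actions between consecutive one-dimensional weight spaces. For $X$, surjectivity is automatic once we know the map is nonzero, because source and target are one-dimensional vector spaces over the residue field (here essentiality is used implicitly, since the weight spaces are $\Bbbk_{\mathbf{m}_i}$-lines and the formula involves $\sigma$); nonzeroness is exactly $\sigma(t)\notin\mathbf{m}_{i+1}$, which holds since $\sigma(\mathbf{m}_i)\subset\mathbf{m}_{i+1}$ and $t\notin\mathbf{m}_i$ would otherwise force $t\in\mathbf{m}_i$. For $Y$, bijectivity was already observed in the double-infinite case and the verification for the truncated modules is identical; the only subtlety is that at the endpoints ($\Bbbk_{\mathbf{m}_0}$ for $\uparrow$ and $N_n$, $\Bbbk_{\mathbf{m}_n}$ for $\downarrow$ and $N_n$) one of the two operators is zero by fiat, but this does not obstruct the argument since we reach those weight spaces from the interior using the other operator. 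Everything else is bookkeeping parallel to Proposition~\ref{prop13}.
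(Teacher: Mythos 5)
Your overall strategy is the right one and is essentially what the paper's ``mutatis mutandis'' refers to: the ``only if'' direction via the proper submodule $\bigoplus_{j\leq i}\Bbbk_{\mathbf{m}_j}$, and the ``if'' direction via surjectivity of $X$ and bijectivity of $Y$ between consecutive one-dimensional weight spaces, are all fine. There is, however, one genuine gap, and it is not the point you single out as delicate. You show that every nonzero vector lying in a \emph{single summand} $\Bbbk_{\mathbf{m}_j}$ generates the whole module; but simplicity requires that every nonzero submodule \emph{contain} such a vector, and this is exactly the step that can fail: in the periodic doubly infinite case of Proposition~\ref{prop13} every single-summand weight vector likewise generates the whole module, yet the module is not simple because of the diagonal submodule generated by $(1+\mathbf{m}_0)+(1+\mathbf{m}_k)$. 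Since Proposition~\ref{prop14} carries no aperiodicity hypothesis, you must explain why this phenomenon cannot occur for the truncated strings.

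For parts \eqref{prop14.1} and \eqref{prop14.3} the issue evaporates because under the hypothesis the ideals $\mathbf{m}_i$ are automatically pairwise distinct: $\sigma(t)\in\mathbf{m}_0$ while $\sigma(t)\not\in\mathbf{m}_j$ for $j\geq 1$ (as $t\not\in\mathbf{m}_{j-1}$ and $\sigma$ induces an injective field embedding $\Bbbk_{\mathbf{m}_{j-1}}\to\Bbbk_{\mathbf{m}_j}$), so $\mathbf{m}_0\neq\mathbf{m}_j$, and any coincidence $\mathbf{m}_m=\mathbf{m}_n$ with $m<n$ would propagate down to $\mathbf{m}_0=\mathbf{m}_{n-m}$ by Lemma~\ref{lem12}; distinctness then lets you extract the weight components of any element of a submodule by acting with $R$. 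For part \eqref{prop14.2} repetitions genuinely occur, since Lemma~\ref{lem12} only propagates coincidences backwards: for $R=\mathbb{C}[h]$, $\sigma(h)=h^2$, $t=h+1$, the sequence $\underline{\mathbf{m}}=(\dots,(h-1),(h-1),(h-1),(h+1))$ lies in ${}_{\infty}Q$, satisfies the hypothesis of \eqref{prop14.2}, and has almost all entries equal (this is precisely what produces the infinite dimensional weight spaces in Section~\ref{s6.4}). Here an extra argument is needed; the cheapest one: if $w=\sum_i w_i$ is a nonzero element of a submodule and $i_1$ is the smallest index with $w_{i_1}\neq 0$, then $X^{-i_1}w=X^{-i_1}w_{i_1}$ is a nonzero element of $\Bbbk_{\mathbf{m}_0}$, because $X$ is injective on every $\Bbbk_{\mathbf{m}_i}$ with $i<0$ while each component above $w_{i_1}$ is killed once it has passed through $\Bbbk_{\mathbf{m}_0}$, where $X$ acts by zero. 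Since $Y$ is bijective between consecutive summands, this single weight vector then generates all of $N^{\downarrow}(\underline{\mathbf{m}})$. With this supplement your argument is complete.
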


\begin{proof}
Mutatis mutandis the proof of Proposition~\ref{prop13}.
\end{proof}

\section{Category of weight modules}\label{s3}

\subsection{``Orbits'' of $\sigma$}\label{s3.1}

For $\mathbf{n},\mathbf{m}\in\mathrm{Max}(R)$ write $\mathbf{n}\leftarrow\mathbf{m}$ provided that
$\sigma(\mathbf{n})\subset\mathbf{m}$. Let $\sim$ denote the minimal equivalence relation on $\mathrm{Max}(R)$ 
which contains the relation $\leftarrow$. 

Because of Lemma~\ref{lem5} the relation $\leftarrow$ can be interpreted as a partially defined endomorphism 
(operating from the right to the left) of the set $\mathrm{Max}(R)$. With this interpretation,
equivalence classes of $\sim$ can be though of as some kind of generalized ``orbits'' of the map $\leftarrow$.
If $\sigma$ is an automorphism, then equivalence classes of $\sim$ are genuine orbits of the cyclic group
generated by $\sigma$ acting on $\mathrm{Max}(R)$.

For $\mathbf{n},\mathbf{m}\in\mathrm{Max}(R)$ write $\mathbf{n}\overset{0}{\leftarrow}\mathbf{m}$
provided that $\mathbf{n}=\mathbf{m}$. For $k\in\mathbb{N}$ write
$\mathbf{n}\overset{k}{\leftarrow}\mathbf{m}$ provided that there exist 
$\mathbf{m}_i\in\mathrm{Max}(R)$, $i=1,2,\dots,k-1$, such that
\begin{displaymath}
\mathbf{n}\leftarrow\mathbf{m}_1\leftarrow\mathbf{m}_2\leftarrow\dots
\leftarrow\mathbf{m}_{k-1}\leftarrow\mathbf{m}.
\end{displaymath}
In particular, $\mathbf{n}\overset{1}{\leftarrow}\mathbf{m}$ if and only if $\mathbf{n}\leftarrow\mathbf{m}$.
Further, we have $\mathbf{n}\sim\mathbf{m}$ if and only if
there is $\mathbf{k}\in\mathrm{Max}(R)$ and $m,n\in\mathbb{Z}_+$
such that $\mathbf{k}\overset{m}{\leftarrow}\mathbf{m}$ and 
$\mathbf{k}\overset{n}{\leftarrow}\mathbf{n}$.
The equivalence classes $S\in \mathrm{Max}(R)/\sim$ fall into two different cases.

{\bf Rooted classes}, that is classes $S$ for which there exists $\mathbf{n}\in S$ such that 
$\sigma(\mathbf{k})\not\subset\mathbf{n}$ for all $\mathbf{k}\in\mathrm{Max}(R)$. In this case 
$S$ consists of all $\mathbf{m}\in\mathrm{Max}(R)$ such that $\mathbf{n}\overset{k}{\leftarrow}\mathbf{m}$
for some $k\in\mathbb{Z}_+$.

{\bf Unrooted classes}, that is classes $S$ for which $\mathbf{n}$ as in the previous paragraph does not exist.
In this case for any $\mathbf{n}\in S$ the class $S$ consists of all $\mathbf{m}\in\mathrm{Max}(R)$ 
for which there exist $\mathbf{k}\in S$ and $k,l\in\mathbb{Z}_+$ such that
$\mathbf{k}\overset{k}{\leftarrow}\mathbf{n}$ and $\mathbf{k}\overset{l}{\leftarrow}\mathbf{m}$.

From this point of view each $S\in \mathrm{Max}(R)/\sim$ is a lower semilattice.

\subsection{Standard decomposition of the category of weight modules}\label{s3.2}

Let $\mathfrak{W}$ denote the full subcategory of $A\text{-}\mathrm{mod}$ consisting of all weight modules.
For $S\in \mathrm{Max}(R)/\sim$ denote by $\mathfrak{W}_S$ the full subcategory of $\mathfrak{W}$ consisting
of all modules $M$ such that $\mathrm{supp}(M)\subset S$. Then we have the following standard fact
(cf. \cite[Proposition~1.5]{DGO}):

\begin{lemma}\label{lem15}
Assume $\sigma$ is essential. Then we have the decomposition
\begin{displaymath}
\mathfrak{W}\cong\bigoplus_{S\in \mathrm{Max}(R)/\sim} \mathfrak{W}_S.
\end{displaymath}
\end{lemma}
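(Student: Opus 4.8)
The plan is to prove the claimed direct sum decomposition of categories by exhibiting, for every weight module $M$, a canonical decomposition $M=\bigoplus_{S} M^{(S)}$ where $M^{(S)}:=\bigoplus_{\mathbf{m}\in S} M_{\mathbf{m}}$, and then showing that each $M^{(S)}$ is an $A$-submodule, that every morphism respects this decomposition, and that modules supported in distinct classes admit no nonzero morphisms between them. Since $M$ is a weight module, $M=\bigoplus_{\mathbf{m}\in\mathrm{Max}(R)}M_{\mathbf{m}}=\bigoplus_{S\in\mathrm{Max}(R)/\sim}M^{(S)}$ as $R$-modules is immediate from the partition of $\mathrm{Max}(R)$ into $\sim$-classes. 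So the whole content is that this $R$-module decomposition is actually a decomposition into $A$-submodules.

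The key step is stability of each $M^{(S)}$ under the generators $X$ and $Y$. For $X$: given $\mathbf{m}\in S$ and $v\in M_{\mathbf{m}}$, Lemma~\ref{lem2} gives $Xv\in\bigoplus_{\mathbf{n}:\sigma(\mathbf{m})\subset\mathbf{n}}M_{\mathbf{n}}$, and every such $\mathbf{n}$ satisfies $\mathbf{n}\leftarrow\mathbf{m}$, hence $\mathbf{n}\sim\mathbf{m}$ and $\mathbf{n}\in S$; thus $XM^{(S)}\subset M^{(S)}$. For $Y$: given $\mathbf{m}\in S$ and $v\in M_{\mathbf{m}}$, if $Yv=0$ there is nothing to prove; otherwise $YM_{\mathbf{m}}\neq0$, so by Lemma~\ref{lem3new} (here essentiality of $\sigma$ is used) there is a unique $\mathbf{n}\in\mathrm{supp}(M)$ with $\sigma(\mathbf{n})\subset\mathbf{m}$ and $YM_{\mathbf{m}}\subset M_{\mathbf{n}}$. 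That $\mathbf{n}$ satisfies $\mathbf{n}\leftarrow\mathbf{m}$, so again $\mathbf{n}\in S$, whence $YM^{(S)}\subset M^{(S)}$. Finally, the action of any $r\in R$ preserves each $M_{\mathbf{m}}$, hence each $M^{(S)}$. So $M^{(S)}$ is an $A$-submodule, and $M=\bigoplus_S M^{(S)}$ as $A$-modules.

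It remains to see that this assignment is functorial and that $\mathrm{Hom}_A(M^{(S)},M^{(S')})=0$ for $S\neq S'$. Functoriality is automatic: for $f:M\to M'$ an $A$-module map, $f$ is in particular $R$-linear, so $f(M_{\mathbf{m}})\subset M'_{\mathbf{m}}$ for every $\mathbf{m}$, hence $f(M^{(S)})\subset (M')^{(S)}$; thus $f=\bigoplus_S f|_{M^{(S)}}$. For the vanishing of cross-morphisms, a nonzero $A$-map $M\to M'$ with $\mathrm{supp}(M)\subset S$ and $\mathrm{supp}(M')\subset S'$ would force some $M'_{\mathbf{m}}\neq0$ with $\mathbf{m}\in S$, i.e.\ $S\cap S'\neq\varnothing$, so $S=S'$. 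Assembling these facts, $\mathfrak{W}$ is the direct sum (as an additive category) of the subcategories $\mathfrak{W}_S$.

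I do not expect any serious obstacle here; this is the ``standard decomposition'' argument (cf.\ \cite[Proposition~1.5]{DGO}), and all the module-theoretic input has been prepared in Lemmata~\ref{lem2}, \ref{lem5} and especially \ref{lem3new}. The only point requiring a little care is that one genuinely needs essentiality of $\sigma$: it is what upgrades Lemma~\ref{lem3} to Lemma~\ref{lem3new}, guaranteeing that $Y$ sends $M_{\mathbf{m}}$ into a single weight space $M_{\mathbf{n}}$ (rather than possibly spreading it out in a way not controlled by $\leftarrow$), and hence that $YM^{(S)}\subset M^{(S)}$. With essentiality in force, the verification is routine, and it explains why the hypothesis appears in the statement of the lemma.
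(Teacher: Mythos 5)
Your argument is correct and is precisely the paper's proof (which is stated in one line as ``follows directly from Lemmata~\ref{lem2} and \ref{lem3new}''), written out in full: $X$-stability of each $M^{(S)}$ from Lemma~\ref{lem2}, $Y$-stability from Lemma~\ref{lem3new} (where essentiality enters), plus the routine functoriality and vanishing of cross-morphisms. The only cosmetic slip is in the $X$-step, where $\sigma(\mathbf{m})\subset\mathbf{n}$ means $\mathbf{m}\leftarrow\mathbf{n}$ rather than $\mathbf{n}\leftarrow\mathbf{m}$, but either way $\mathbf{n}\sim\mathbf{m}$, so nothing is affected.
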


\begin{proof}
This follows directly from Lemmata~\ref{lem2} and  \ref{lem3new}.
\end{proof}

As we will see, one major difference between GWAs and wGWAs is that for the former algebras the categories
$\mathfrak{W}_S$ are usually indecomposable, while for the latter algebras the categories
$\mathfrak{W}_S$ are usually decomposable.

\section{Simple weight modules}\label{s4}

\subsection{Support of a simple weight module}\label{s4.1}

\begin{proposition}\label{prop16}
Let $M$ be a simple weight module. Then there is
\begin{displaymath}
\underline{\mathbf{m}}\in  {}_{\infty}Q_{\infty}\bigcup Q_{\infty}\bigcup {}_{\infty}Q\bigcup
\bigcup_{n\in\mathbb{Z}^+}Q_n
\end{displaymath}
such that $\mathrm{supp}(M)\subset \underline{\mathbf{m}}$.
\end{proposition}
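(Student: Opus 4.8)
The plan is to pick a nonzero weight vector $v\in M_{\mathbf{m}_0}$ for some $\mathbf{m}_0\in\mathrm{supp}(M)$ and to trace out the ``string'' of weights obtained by repeatedly applying $X$ and $Y$, using the structural lemmata from Section~\ref{s2} to control where these vectors land. Since $M$ is simple, it is generated by $v$, so $\mathrm{supp}(M)$ is determined by which weight spaces are reached from $v$ by words in $X$ and $Y$.

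First I would build the ``upward'' part of the string. Starting from $\mathbf{m}_0$, apply $X$. By Lemma~\ref{lem2}, $XM_{\mathbf{m}_0}\subset\bigoplus_{\mathbf{n}:\sigma(\mathbf{m}_0)\subset\mathbf{n}}M_{\mathbf{n}}$; the key point is that by Lemma~\ref{lem5} (applied with the roles reversed: $\sigma(\mathbf{m}_0)\subset\mathbf{n}$ forces $\mathbf{n}$ to be the \emph{unique} maximal ideal containing $\sigma(\mathbf{m}_0)$, since $\sigma(\mathbf{m}_0)$ has codimension one in $\sigma(R)$ and $\sigma$ is essential) there is at most one such $\mathbf{n}$, call it $\mathbf{m}_1$, so $XM_{\mathbf{m}_0}\subset M_{\mathbf{m}_1}$. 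More precisely: if $XM_{\mathbf{m}_0}=0$ then by Lemma~\ref{lem1} we have $t\in\mathbf{m}_0$, and the upward part terminates at index $0$; otherwise $\mathbf{m}_1$ is the unique maximal ideal with $\sigma(\mathbf{m}_0)\subset\mathbf{m}_1$, it lies in $\mathrm{supp}(M)$, and we iterate. This produces either an infinite sequence $\mathbf{m}_0,\mathbf{m}_1,\mathbf{m}_2,\dots$ with $\sigma(\mathbf{m}_i)\subset\mathbf{m}_{i+1}$ and $t\notin\mathbf{m}_i$ for all $i\ge 0$, or a finite one $\mathbf{m}_0,\dots,\mathbf{m}_n$ of the same type with the extra condition $t\in\mathbf{m}_n$ at the top.

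Next I would build the ``downward'' part using $Y$ and Lemma~\ref{lem3new}. Starting from $\mathbf{m}_0$: if $YM_{\mathbf{m}_0}=0$, then by Lemma~\ref{lem1} we have $\sigma(t)\in\mathbf{m}_0$ and the downward part terminates at index $0$; otherwise Lemma~\ref{lem3new} gives a unique $\mathbf{m}_{-1}\in\mathrm{supp}(M)$ with $\sigma(\mathbf{m}_{-1})\subset\mathbf{m}_0$ and $YM_{\mathbf{m}_0}\subset M_{\mathbf{m}_{-1}}$, and we iterate. This yields either an infinite sequence $\mathbf{m}_0,\mathbf{m}_{-1},\mathbf{m}_{-2},\dots$ with $\sigma(\mathbf{m}_{i-1})\subset\mathbf{m}_i$ for all $i\le 0$, or a finite one $\mathbf{m}_0,\dots,\mathbf{m}_{-n}$ ending with $\sigma(t)\in\mathbf{m}_{-n}$ at the bottom. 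Concatenating the upward and downward parts along their common index $0$ produces a sequence indexed by $\mathbb{Z}$, $\mathbb{Z}_+$, $\mathbb{Z}_-$, or $\{-n',\dots,0,\dots,n\}$, satisfying $\sigma(\mathbf{m}_i)\subset\mathbf{m}_{i+1}$ throughout, together with the boundary conditions ($t$ in the top weight, $\sigma(t)$ in the bottom weight) in the truncated directions. After reindexing (shifting so that a bounded string starts at $0$), this is exactly an element $\underline{\mathbf{m}}$ of ${}_{\infty}Q_{\infty}$, $Q_{\infty}$, ${}_{\infty}Q$, or some $Q_n$, and by construction every weight reachable from $v$ by a word in $X,Y$ lies in $\{\mathbf{m}_i\}$; since $M$ is simple it is generated by $v$, whence $\mathrm{supp}(M)\subset\underline{\mathbf{m}}$.

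The main obstacle is the bookkeeping in the downward direction: Lemma~\ref{lem3new} only applies when $YM_{\mathbf{m}}\neq 0$, so one must carefully distinguish the case where the action of $Y$ kills the whole weight space (giving the boundary condition $\sigma(t)\in\mathbf{m}$ and terminating the string) from the case where it does not, and similarly for $X$; one must also check that the boundary conditions in the definitions of $Q_{\infty}$, ${}_{\infty}Q$, and $Q_n$ are precisely the ones forced by Lemma~\ref{lem1}. A secondary subtlety is ruling out that the upward and downward processes produce two \emph{different} maximal ideals at some index when the string is bi-infinite but happens to loop back on itself — but this is not actually a problem, since a periodic bi-infinite sequence is still a legitimate element of ${}_{\infty}Q_{\infty}$ and the statement only asserts containment of the support, not that the $\mathbf{m}_i$ are distinct.
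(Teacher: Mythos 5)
There is a genuine gap, and it sits exactly at the point where weak GWAs differ from GWAs. In your ``upward'' step you claim that Lemma~\ref{lem5}, ``applied with the roles reversed,'' shows that there is at most one $\mathbf{n}$ with $\sigma(\mathbf{m}_0)\subset\mathbf{n}$. Lemma~\ref{lem5} only gives uniqueness of \emph{predecessors}: for fixed $\mathbf{m}$ there is at most one $\mathbf{n}$ with $\sigma(\mathbf{n})\subset\mathbf{m}$ (this is why the downward/$Y$-direction in your argument is fine). Successors are genuinely non-unique when $\sigma$ is merely an endomorphism: $\sigma(\mathbf{m}_0)$ is an ideal of the subring $\sigma(R)$, and knowing $\mathbf{n}\cap\sigma(R)$ does not determine $\mathbf{n}$. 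Concretely, for $R=\mathbb{C}[h]$, $\sigma(h)=h^2$ and $\mathbf{m}_0=(h-1)$, one has $\sigma(\mathbf{m}_0)\subset(h-1)$ and $\sigma(\mathbf{m}_0)\subset(h+1)$, and both successors can occur in the support of a single simple module (this is precisely what produces the infinite-dimensional weight space in Example~\ref{ex1257}). Consequently $XM_{\mathbf{m}_0}$ need not land in a single weight space, your ``unique $\mathbf{m}_1$'' is ill-defined, and if you simply pick one branch your final assertion --- that every weight reachable from $v$ by a word in $X,Y$ lies on the chain you built --- fails: the other branches of $X$ may reach support elements your chain misses. The closing ``secondary subtlety'' you dismiss is a red herring; the real issue is the branching of $X$, not periodicity.

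The paper's proof is organized to avoid exactly this. It first proves a comparability statement (Lemma~\ref{lem753}): using $A=\sum_{i,j}Y^iRX^j$ together with Lemmata~\ref{lem2} and \ref{lem3new}, any two support elements are joined by a chain of arrows $\leftarrow$ going in one direction or the other (every weight of $M$ is reached from $M_{\mathbf{m}}$ by going up some number of $X$-steps and then down some number of $Y$-steps, and the downward steps are deterministic). It then builds the string \emph{globally}: take the maximal downward chain from a first support element (unique by Lemma~\ref{lem5}), and repeatedly append, to the right end, a connecting chain to any support element not yet covered; uniqueness of predecessors forces each such connecting chain to point upward from the current right end, so the limit is a single sequence in one of $ {}_{\infty}Q_{\infty}$, $Q_{\infty}$, ${}_{\infty}Q$, $Q_n$ containing all of $\mathrm{supp}(M)$, possibly revisiting the same maximal ideal at many indices. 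To repair your argument you would need to replace the local ``choose the unique successor'' step by something like this global bookkeeping; as written, the upward half of your construction does not go through.
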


\begin{proof}
We start with the easy observation that
\begin{equation}\label{eq7}
A=\sum_{i,j\in\mathbb{Z}_+}Y^iRX^j 
\end{equation}
which follows directly from \eqref{eq1} (note that the sum in \eqref{eq7} is not claimed to be direct).
Now observe the following:

\begin{lemma}\label{lem753}
Let $M$ be a simple weight module. Then for any different $\mathbf{m},\mathbf{n}\in \mathrm{supp}(M)$ there exists
$k\in\mathbb{Z}_+$ and $\mathbf{k}_1,\mathbf{k}_2,\dots,\mathbf{k}_k\in \mathrm{supp}(M)$ such that 
either we have $\mathbf{m}\leftarrow \mathbf{k}_1\leftarrow\dots \leftarrow\mathbf{k}_k\leftarrow\mathbf{n}$
or we have $\mathbf{n}\leftarrow \mathbf{k}_1\leftarrow\dots \leftarrow\mathbf{k}_k\leftarrow\mathbf{m}$.
\end{lemma}

\begin{proof}
This follows immediately combining \eqref{eq7}, Lemma~\ref{lem2}, Lemma~\ref{lem3new} and the property 
$AM_{\mathbf{m}}=M$.
\end{proof}

From \eqref{eq7} it follows that $\mathrm{supp}(M)$ is at most countable, say
$\mathrm{supp}(M)=\{\mathbf{n}_1,\mathbf{n}_2,\dots\}$. Set $\mathbf{m}_1:=\mathbf{n}_1$ and define
$K_1$ as the maximal sequence $(\dots, \mathbf{m}_{-1},\mathbf{m}_0,\mathbf{m}_1)$,
finite or infinite, such that all $\mathbf{m}_{i}\in \mathrm{supp}(M)$ and
$\mathbf{m}_{i}\leftarrow \mathbf{m}_{i+1}$ for all $i\leq 0$ appearing in the sequence. 
Define now $K_s$ for $s>1$ inductively
as follows:  If all elements of $\mathrm{supp}(M)$ appear already in $K_{s-1}$, set $K_s:=K_{s-1}$.
Otherwise, choose $i$ minimal such that $\mathbf{n}_{i}\in \mathrm{supp}(M)$ does not appear in
$K_{s-1}$. Let $\mathbf{m}$ be the rightmost element of $K_{s-1}$. Then from Lemma~\ref{lem753} it follows that 
\begin{displaymath}
\mathbf{m}\leftarrow \mathbf{k}_1\leftarrow\dots \leftarrow\mathbf{k}_k\leftarrow\mathbf{n}_i 
\end{displaymath}
for some $k\in\mathbb{Z}_+$ and $\mathbf{k}_1,\mathbf{k}_2,\dots,\mathbf{k}_k\in \mathrm{supp}(M)$
and we can set
\begin{displaymath}
K_s:=(K_{s-1},\mathbf{k}_1,\dots,\mathbf{k}_k,\mathbf{n}_i).
\end{displaymath}
Then $\mathrm{supp}(M)$ is, by construction, the limit of $K_i$ when $i\to\infty$, and it obviously has the
required form.
\end{proof}

\subsection{Band modules}\label{s4.2}

Let $\underline{\mathbf{m}}=\{\mathbf{m}_i\}$ be doubly infinite and periodic with the minimal period
$k\in\{1,2,3,\dots\}$. Set $\mathbf{m}:=\mathbf{m}_0$ and $\Bbbk:=\Bbbk_{\mathbf{m}}$.
Note that $\sigma$, being essential, induces for each $i>0$ an isomorphism $\Bbbk\cong \Bbbk_{\mathbf{m}_i}$
by sending $r+\mathbf{m}$ to $\sigma^i(r)+\mathbf{m}_i$. Denote the latter isomorphism by $\tau_i$
and also denote by $\overline{\sigma}$ the automorphism of $\Bbbk$ induced by $\tau_k$. 
Set $\tau_0:=\mathrm{id}_{\Bbbk}$.

Consider the skew Laurent polynomial ring $\mathbf{P}:=\Bbbk[\alpha,\alpha^{-1},\overline{\sigma}]$
given by $\alpha (r+\mathbf{m})=\overline{\sigma}(r+\mathbf{m})\alpha$ for $r\in R$. This ring is a
principal ideal domain (both left and right). For an irreducible element  $f\in \mathbf{P}$ denote by 
$L_f:=\mathbf{P}/\mathbf{P}f$ the corresponding simple module. Note that $L_f\cong L_{\tilde{f}}$ if and only if
$f$ and $\tilde{f}$ are similar in $\mathbf{P}$. Note also that $L_f$ is finite dimensional over $\Bbbk$.

Define the weight $A$-module $M^f$ with 
\begin{displaymath}
\mathrm{supp}(M^f):=\{\mathbf{m}_{-(k-1)},\dots\mathbf{m}_{-2},\mathbf{m}_{-1},\mathbf{m}_0\} 
\end{displaymath}
as follows: for $i\in\{0,1,\dots,k-1\}$ define the $R$-module $M^f_{\mathbf{m}_{-i}}$ as
${}^{\tau_i}\hspace{-1mm}L_f$, that is $M^f_{\mathbf{m}_{-i}}=L_f$ as the vector space but the action of $R$ is 
twisted by $\tau_i$, that is $(r+\mathbf{m}_{-i})\cdot v=\tau_i(r+\mathbf{m})v$ for $r\in R$ and $v\in L_f$.
For $i\in\{0,1,\dots,k-2\}$ define the action of $Y$ from $L_f=M^f_{\mathbf{m}_{-i}}$
to $L_f=M^f_{\mathbf{m}_{-i-1}}$ as the identity. 
Define the action of $Y$ from  $M^f_{\mathbf{m}_{-(k-1)}}$ to $M^f_{\mathbf{m}_{0}}$ as the action of 
$\alpha^{-1}$. In particular, the action of $Y$ on $M^f_{\mathbf{m}_{-i}}$ is bijective. Define the action 
of $X$ on $Yv$ as $\sigma(t)v$.

\begin{lemma}\label{lem21}
The above defines on $M^f$ the structure of a simple weight $A$-module,
moreover, $M^f\cong M^{\tilde{f}}$ if and only if $f$ and $\tilde{f}$ are similar.
\end{lemma}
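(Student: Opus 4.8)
The plan is to verify, in order, three things: that the prescription above is a well-defined action of $A$ on $M^f$, that this action gives a \emph{simple} module, and finally the isomorphism criterion. For the first part I would mimic the proof of Lemma~\ref{lem8}: the only genuinely new ingredient compared to the infinite string modules is that one full ``period'' of the $Y$-action is the action of $\alpha^{-1}$ on $L_f$ rather than an arbitrary choice of preimage under $\sigma$. So I would first observe that, by construction, $Y$ acts bijectively on each $M^f_{\mathbf{m}_{-i}}$, then \emph{define} $X$ by $X(Yv):=\sigma(t)v$ — which makes sense precisely because $Y$ is bijective — and then check the four relations in \eqref{eq1}. The relations $Xr=\sigma(r)X$ and $rY=Y\sigma(r)$ for $i\in\{0,\dots,k-2\}$ are immediate since there $Y$ is the identity map between spaces twisted by $\tau_i$ and $\tau_{i+1}=\tau_{i}$ composed with $\sigma$; across the period they reduce to the defining relation $\alpha(r+\mathbf{m})=\overline{\sigma}(r+\mathbf{m})\alpha$ of $\mathbf{P}$ together with $\tau_k$ inducing $\overline{\sigma}$. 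The relations $YX=t$ and $XY=\sigma(t)$ then follow from the definition $X(Yv)=\sigma(t)v$ exactly as in Lemma~\ref{lem8}.

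For simplicity, let $v\in M^f_{\mathbf{m}_{-i}}$ be a nonzero weight vector and let $N\subseteq M^f$ be the submodule it generates. Since $Y$ acts bijectively and cyclically through the $k$ weight spaces, applying suitable powers of $Y$ we may assume $i=0$, i.e.\ $v\in M^f_{\mathbf{m}_0}=L_f$. Now $Y^k$ restricted to $M^f_{\mathbf{m}_0}$ is (up to the identity maps) the action of $\alpha^{-1}$, and $X^k$ restricted to $M^f_{\mathbf{m}_0}$ is the action of $\sigma(t)\sigma^{k-1}(t)\cdots$ followed by $\alpha$; the key point is that $\sigma(t)\notin\mathbf{m}_i$ for any $i$ (this is forced: if $\sigma(t)\in\mathbf{m}_i$ then by periodicity and Lemma~\ref{lem1} one could not have $Y$ bijective — I would either impose this as obviously necessary or derive the needed invertibility directly from bijectivity of $Y$ and the relation $XY=\sigma(t)$). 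Hence on $M^f_{\mathbf{m}_0}$ both $\alpha$ and $\alpha^{-1}$ act, and $R$ acts through $\Bbbk$, so $M^f_{\mathbf{m}_0}$ is a $\mathbf{P}$-submodule of $N\cap M^f_{\mathbf{m}_0}$; but $M^f_{\mathbf{m}_0}=L_f=\mathbf{P}/\mathbf{P}f$ is a simple $\mathbf{P}$-module, so $N\cap M^f_{\mathbf{m}_0}=M^f_{\mathbf{m}_0}$, and then applying powers of $Y$ again gives $N=M^f$. Conversely any nonzero submodule meets some weight space and hence, by the same argument, is everything, so $M^f$ is simple.

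For the isomorphism statement: if $f$ and $\tilde f$ are similar then $L_f\cong L_{\tilde f}$ as $\mathbf{P}$-modules, and one checks directly that the corresponding identification of the weight spaces $M^f_{\mathbf{m}_{-i}}\cong M^{\tilde f}_{\mathbf{m}_{-i}}$ (it is an isomorphism of $R$-modules since both are ${}^{\tau_i}L_f$) intertwines the actions of $X$ and $Y$, because those actions were \emph{defined} in terms of the $\mathbf{P}$-module structure. Conversely, given an $A$-isomorphism $\varphi\colon M^f\to M^{\tilde f}$, first note both modules have the same support, so $\varphi$ restricts to an $R$-module isomorphism $M^f_{\mathbf{m}_0}\to M^{\tilde f}_{\mathbf{m}_0}$, i.e.\ a $\Bbbk$-linear isomorphism $L_f\to L_{\tilde f}$; since $\varphi$ also commutes with $Y^k$ (acting as $\alpha^{-1}$ on both sides) and with $X^k$, it is in fact an isomorphism of $\mathbf{P}$-modules $L_f\to L_{\tilde f}$, whence $f$ and $\tilde f$ are similar. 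The main obstacle I anticipate is bookkeeping: carefully tracking the twists $\tau_i$ through the identity maps and the single $\alpha^{\pm1}$ across the period, and making sure the invertibility of $\sigma(t)$ on each $\Bbbk_{\mathbf{m}_i}$ is genuinely available rather than silently assumed; once those are pinned down, every verification reduces either to the computations in Lemma~\ref{lem8} or to standard facts about the simple module $\mathbf{P}/\mathbf{P}f$ over the principal ideal domain $\mathbf{P}$.
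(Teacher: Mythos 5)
Your overall strategy --- check the relations as in Lemma~\ref{lem8}, reduce a nonzero weight vector to $M^f_{\mathbf{m}_0}$ using bijectivity of $Y$, invoke simplicity of $L_f=\mathbf{P}/\mathbf{P}f$ to fill up one weight space and then spread out with $Y$ again, and read off similarity from the induced $\mathbf{P}$-module isomorphism on a single weight space --- is exactly the paper's argument (the paper's own proof is a terser version of the same steps, and you supply more detail on the isomorphism criterion than the paper does). However, one assertion you flag as ``the key point'' is false: bijectivity of $Y$ does \emph{not} force $\sigma(t)\not\in\mathbf{m}_i$. Lemma~\ref{lem1} only says $Yv=0$ implies $\sigma(t)\in\mathbf{m}$, not the converse; if $\sigma(t)\in\mathbf{m}_{-i}$ the construction simply makes $X$ act as zero on $M^f_{\mathbf{m}_{-i-1}}=YM^f_{\mathbf{m}_{-i}}$ while $Y$ remains bijective. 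Such band modules genuinely occur and matter: they are precisely the ones produced in Subcases 2b and 3b of the proof of Theorem~\ref{thm23}, and Lemma~\ref{lem25} explicitly treats $t\in\mathbf{m}_i$ as a possibility. So if your simplicity argument really leaned on invertibility of $X^k$, it would fail exactly in the cases the lemma is most needed for.

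Fortunately the detour through $X^k$ is unnecessary, and the repair is the second alternative you already gesture at: the submodule $N$ generated by $v$ meets $M^f_{\mathbf{m}_0}$ in a $\Bbbk$-subspace stable under $Y^k=\alpha^{-1}$; since $L_f$ is finite dimensional over $\Bbbk$ and $\alpha^{-1}$ is injective, that subspace is automatically $\alpha$-stable, hence a $\mathbf{P}$-submodule of the simple module $L_f$, hence all of it. The same finite-dimensionality remark is what makes the converse direction of the isomorphism claim work (commuting with $\alpha^{-1}$ and $\Bbbk$ already forces commuting with $\alpha$). With the spurious claim about $\sigma(t)$ deleted and replaced by this observation, your proof is complete and coincides with the paper's.
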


\begin{proof}
To prove that  $M^f$ is an $A$-module, we have to check the defining relations. They all are immediate from the
definitions. To prove simplicity let $v\in M^f$ be a nonzero weight element. As the action of $Y$ is bijective,
we may assume $v\in M^f_{\mathbf{m}}$. The linear operator $Y^k$ on $M^f_{\mathbf{m}}$ is bijective by definition
and hence defines on $M^f_{\mathbf{m}}$ the structure of a $\mathbf{P}$-module which is isomorphic to 
$L_f$ by construction. Therefore the $A$-submodule  generated by $v$ contains $M^f_{\mathbf{m}}$ and thus the 
whole of $M^f$ as the action of $Y$ is bijective.  The claim about isomorphism is clear.
\end{proof}

Define the weight $A$-module $N^f$ with 
\begin{displaymath}
\mathrm{supp}(N^f):=\{\mathbf{m}_0,\mathbf{m}_1,\mathbf{m}_2,\dots,\mathbf{m}_{k-1}\} 
\end{displaymath}
as follows: for $i\in\{0,1,\dots,k-1\}$ define the $R$-module $N^f_{\mathbf{m}_i}$ as
${}^{\tau_i^{-1}}\hspace{-1mm}L_f$, that is $M^f_{\mathbf{m}_i}=L_f$ 
as the vector space but the action of $R$ is 
twisted by $\tau_i^{-1}$, that is $\tau_i(r+\mathbf{m})\cdot v=(r+\mathbf{m})v$ for $r\in R$ and $v\in L_f$.
For $i=0,1,2,\dots,k-2$ define the action of $X$ from $M^f_{\mathbf{m}_i}=L_f$ to 
$M^f_{\mathbf{m}_{i+1}}=L_f$ as the identity. Define the action of $X$ from 
$M^f_{\mathbf{m}_{k-1}}$ to $M^f_{\mathbf{m}_{0}}$ as the action of 
$\alpha^{-1}$. Then the action of $X$ is bijective. Define the action of $Y$ on $Xv$ as $tv$.

\begin{lemma}\label{lem22}
The above defines on $N^f$ the structure of a simple weight $A$-module,
moreover, $N^f\cong N^{\tilde{f}}$ if and only if $f$ and $\tilde{f}$ are similar.
\end{lemma}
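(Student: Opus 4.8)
The plan is to mimic the proof of Lemma~\ref{lem21} with the roles of $X$ and $Y$ interchanged. First I would check that the defining relations \eqref{eq1} hold on $N^f$. The relations $Xr=\sigma(r)X$ and $rY=Y\sigma(r)$ reduce to compatibility between the twists $\tau_i^{-1}$ and the maps defining the $X$-action: for $i<k-1$ the action of $X$ is the identity on the underlying space $L_f$, and since $N^f_{\mathbf{m}_i}={}^{\tau_i^{-1}}L_f$ while $N^f_{\mathbf{m}_{i+1}}={}^{\tau_{i+1}^{-1}}L_f$ with $\tau_{i+1}=\overline{\sigma}\circ\tau_i$ composed appropriately, the identity map indeed intertwines $r$ with $\sigma(r)$; at the wrap-around step $i=k-1\to 0$ the action of $X$ is $\alpha^{-1}$, and the relation $\alpha(r+\mathbf{m})=\overline{\sigma}(r+\mathbf{m})\alpha$ in $\mathbf{P}$ gives exactly what is needed. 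The relations $YX=t$ and $XY=\sigma(t)$ hold because $Y$ on $Xv$ is defined to be $tv$, and then $XY$ applied to a generic element, being $X$ applied to $t$ times that element, unwinds to multiplication by $\sigma(t)$ using $Xr=\sigma(r)X$. Since $X$ is built out of the identity and $\alpha^{\pm1}$, it is bijective on each weight space, so these computations make sense on all of $N^f$.

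Next I would prove simplicity. Let $0\neq v\in N^f$ be a weight element. Since $X$ acts bijectively we may assume $v\in N^f_{\mathbf{m}_0}$. The operator $X^k$ on $N^f_{\mathbf{m}_0}$ equals (up to the bijective intermediate identity maps) the action of $\alpha^{-1}$, hence is bijective and endows $N^f_{\mathbf{m}_0}$ with a $\mathbf{P}$-module structure; by construction this $\mathbf{P}$-module is isomorphic to $L_f=\mathbf{P}/\mathbf{P}f$, which is simple over $\mathbf{P}$. Therefore the $A$-submodule generated by $v$ contains all of $N^f_{\mathbf{m}_0}$, and applying powers of $X$ (which are bijective) sweeps out every weight space $N^f_{\mathbf{m}_i}$, so it is all of $N^f$. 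This shows $N^f$ is simple.

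Finally, for the isomorphism statement, the ``if'' direction is immediate: similar irreducible elements $f,\tilde f$ give isomorphic $\mathbf{P}$-modules $L_f\cong L_{\tilde f}$, and this isomorphism is readily upgraded to an $A$-module isomorphism $N^f\cong N^{\tilde f}$ compatible with the twists. For the ``only if'' direction, an $A$-module isomorphism $\varphi:N^f\to N^{\tilde f}$ must preserve supports, hence must restrict on $N^f_{\mathbf{m}_0}$ to a linear bijection onto $N^{\tilde f}_{\mathbf{m}_0}$ that intertwines the $\mathbf{P}$-module structures coming from $X^k=\alpha^{-1}$ on both sides; this forces $L_f\cong L_{\tilde f}$ as $\mathbf{P}$-modules, and hence $f$ and $\tilde f$ are similar in $\mathbf{P}$. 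I expect the only mildly delicate point to be bookkeeping the twists $\tau_i^{-1}$ correctly so that all four relations in \eqref{eq1} check out at the wrap-around index; everything else is a transcription, with $X$ replacing $Y$, of the argument already given for $M^f$, so I would simply write ``Mutatis mutandis the proof of Lemma~\ref{lem21}'' for the routine parts.
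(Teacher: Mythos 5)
Your proposal is correct and follows essentially the same route as the paper, whose proof of this lemma is literally ``mutatis mutandis the proof of Lemma~\ref{lem21}'': check the relations, use bijectivity of $X$ and the $\mathbf{P}$-module structure on $N^f_{\mathbf{m}_0}$ induced by $X^k$ acting as $\alpha^{-1}$ to get simplicity, and read off the isomorphism criterion from $L_f\cong L_{\tilde f}$. The only soft spot is the phrase ``$\tau_{i+1}=\overline{\sigma}\circ\tau_i$ composed appropriately,'' which you rightly flag as the bookkeeping to verify, but this does not affect the validity of the argument.
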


\begin{proof}
Mutatis mutandis the proof of Lemma~\ref{lem21}.
\end{proof}

\begin{lemma}\label{lem25}
We have $M^f\cong N^{\tilde{f}}$ if and only if $f$ is similar to $\tilde{f}$ and 
for every $i=0,1,2,\dots,k-1$ we have $t\not\in\mathbf{m}_i$.
\end{lemma}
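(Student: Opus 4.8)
The module $M^f$ has support $\{\mathbf{m}_{-(k-1)},\dots,\mathbf{m}_{-1},\mathbf{m}_0\}$, while $N^{\tilde f}$ has support $\{\mathbf{m}_0,\mathbf{m}_1,\dots,\mathbf{m}_{k-1}\}$; since $\underline{\mathbf{m}}$ is periodic with minimal period $k$, both of these sets equal $\{\mathbf{m}_0,\mathbf{m}_1,\dots,\mathbf{m}_{k-1}\}$ (reindexing $\mathbf{m}_{-i}=\mathbf{m}_{k-i}$), so the supports automatically agree and the obstruction to an isomorphism is not the support but the structure of the action of $X$ and $Y$. The plan is: first argue that an isomorphism $M^f\cong N^{\tilde f}$ forces each weight space of $M^f$ to be finite-dimensional with $Y$ acting bijectively \emph{and} $X$ acting bijectively on each weight space, which happens precisely when $\sigma(t)\notin\mathbf{m}_i$ and $t\notin\mathbf{m}_i$ for all the relevant $i$; then, granting these invertibility conditions, exhibit the isomorphism explicitly and read off that it exists iff $f$ is similar to $\tilde f$.

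\textbf{Key steps.} \emph{(Step 1: necessity of $t\notin\mathbf{m}_i$.)} Suppose $\varphi:M^f\xrightarrow{\sim}N^{\tilde f}$. On $M^f$ the action of $Y$ is bijective by construction (it is built from the identity maps and the invertible $\alpha^{-1}$), hence $Y$ is bijective on $N^{\tilde f}$ as well. By Lemma~\ref{lem1}, if $t\in\mathbf{m}_i$ for some $i$ then $Y$ kills $N^{\tilde f}_{\mathbf{m}_i}$ (since $XYv=\sigma(t)v$ and... more directly: in $N^{\tilde f}$ the operator $Y$ on $X v$ is defined as $tv$, so if $t\in\mathbf{m}_i$ the composite $YX$ vanishes on $M^{\tilde f}_{\mathbf{m}_i}$, contradicting bijectivity of $Y$ combined with bijectivity of $X$ on $N^{\tilde f}$). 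Thus $t\notin\mathbf{m}_i$ for $i=0,1,\dots,k-1$ is forced. \emph{(Step 2: under $t\notin\mathbf{m}_i$ for all $i$, both $X$ and $Y$ are bijective on all weight spaces of both modules.)} On $N^{\tilde f}$, $X$ is bijective by construction; since $YX$ acts as multiplication by $t$ and $t\notin\mathbf{m}_i$, $Y$ is bijective too. Symmetrically on $M^f$: $Y$ is bijective by construction, $XY$ acts as multiplication by $\sigma(t)$, and $t\notin\mathbf{m}_i$ implies $\sigma(t)\notin\mathbf{m}_{i+1}$ (as in the proof of Proposition~\ref{prop13}), so $X$ is bijective on every weight space of $M^f$. \emph{(Step 3: constructing the isomorphism.)} Once all $X$'s and $Y$'s are bijective, $M^f_{\mathbf{m}_0}$ acquires a $\mathbf{P}$-module structure via $Y^k$ isomorphic to $L_f$, and $N^{\tilde f}_{\mathbf{m}_0}$ acquires one via $X^{-k}$ (equivalently $Y^k$, since $X$ and $Y$ are now mutually inverse up to the scalars $t,\sigma(t)$) isomorphic to $L_{\tilde f}$. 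Identify the two $\mathbf{P}$-module structures: on $M^f_{\mathbf{m}_0}$ the action of $\alpha$ is $Y^{-k}$ whereas on $N^{\tilde f}_{\mathbf{m}_0}$ it is built from $X^{k}$; using $Y X=t$ and $XY=\sigma(t)$ one checks $Y^{-k}$ and the $N$-side $\alpha$ differ by multiplication by a nonzero element of $\Bbbk$ (a product of images of $t$), which does not change the isomorphism class of the simple $\mathbf{P}$-module. Hence $M^f_{\mathbf{m}_0}\cong L_f$ and $N^{\tilde f}_{\mathbf{m}_0}\cong L_{\tilde f}$ as $\mathbf{P}$-modules, and these are isomorphic iff $f$ is similar to $\tilde f$; an isomorphism of the degree-zero weight spaces as $\mathbf{P}$-modules extends uniquely to an $A$-module isomorphism by transporting along the (now invertible) $X$'s, using simplicity (Lemma~\ref{lem21}, Lemma~\ref{lem22}) to see the extension is well-defined and bijective.

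\textbf{Sufficiency.} Conversely, if $f$ is similar to $\tilde f$ and $t\notin\mathbf{m}_i$ for all $i=0,\dots,k-1$, run Step 3 in reverse: pick a $\mathbf{P}$-isomorphism $L_f\cong L_{\tilde f}$, use it on the $\mathbf{m}_0$-weight space, and propagate via the invertible operators to get an $A$-isomorphism $M^f\cong N^{\tilde f}$.

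\textbf{Main obstacle.} The delicate point is Step 3: matching the two a priori different $\mathbf{P}$-module structures on the $\mathbf{m}_0$-weight space. On $M^f$ the generator $\alpha$ of $\mathbf{P}$ corresponds to $Y^{-k}$ (with $Y$ going \emph{down} the string), while on $N^{\tilde f}$ it corresponds to (a conjugate of) $X^k$ (with $X$ going \emph{up}); the bookkeeping of the twists $\tau_i$ and the scalars coming from $t$ and $\sigma(t)$ must be tracked carefully to confirm that the composite $Y^k\circ(\text{the }N\text{-side }\alpha)$ is scalar multiplication, so that the similarity class of the defining irreducible element is genuinely preserved and no spurious twist by $\overline\sigma$ is introduced. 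Everything else is a routine check of relations of the type already performed in Lemmata~\ref{lem8} and~\ref{lem21}.
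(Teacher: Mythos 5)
Your proof is correct and follows essentially the same route as the paper's: necessity of $t\not\in\mathbf{m}_i$ is deduced from the bijectivity of $X$ and $Y$ transported through the isomorphism, and the similarity of $f$ and $\tilde{f}$ is read off by comparing the $\mathbf{P}$-module structures on the $\mathbf{m}_0$-weight space using that $X^k$ and $Y^k$ are mutually inverse up to a nonzero scalar. The bookkeeping you flag as the main obstacle in Step 3 is precisely the point the paper compresses into ``it is easy to check.''
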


\begin{proof}
If  $M^f\cong N^{\tilde{f}}$ the both $X$ and $Y$ act bijectively on $M^f$
and hence for every $i=0,1,2,\dots,k-1$ we have $t\not\in\mathbf{m}_i$.
Further, we have $M^f_{\mathbf{m}}\cong N^{\tilde{f}}_{\mathbf{m}}$ as modules over the
subalgebra generated by $R$ and $X^k$. It is easy to check that the latter action defines on
$M^f_{\mathbf{m}}$ and $N^{\tilde{f}}_{\mathbf{m}}$ the structures of $\mathbf{P}$-modules (given by $f$
and $\tilde{f}$, respectively) which thus must be isomorphic. This yields that $f$ and $\tilde{f}$
are similar.

Assume now that $f$ is similar to $\tilde{f}$ and for every $i=0,1,2,\dots,k-1$ we have $t\not\in\mathbf{m}_i$. 
Then the actions of both $X$ and $Y$ on both $M^f$ and $N^{\tilde{f}}$ are bijective. As the action of 
$X^kY^k$ on both $M^f_{\mathbf{m}}$ and $N^{\tilde{f}}_{\mathbf{m}}$ is given by some element of $r$
and is non-zero, $X^k$ is an inverse of $Y^k$ up to a scalar. The claim $M^f\cong N^{\tilde{f}}$ now follows
by comparing the definitions of $M^f$ and $N^{\tilde{f}}$.
\end{proof}

\subsection{Classification of simple weight modules}\label{s4.3}

Our main result is the following theorem

\begin{theorem}\label{thm23}
Each simple weight $A$-module is either a string module or a band module.
\end{theorem}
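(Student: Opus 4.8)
The plan is to fix a simple weight $A$-module $M$ and analyze the shape of its support, which by Proposition~\ref{prop16} already embeds into one of the four kinds of chains in ${}_{\infty}Q_{\infty}$, $Q_\infty$, ${}_\infty Q$, or $Q_n$. So I would write $\supp(M) = \{\mathbf m_i\}$ where the index $i$ runs over $\mathbb Z$, $\mathbb Z_+$, $\mathbb Z_-$, or $\{0,1,\dots,n\}$, with $\sigma(\mathbf m_i)\subset\mathbf m_{i+1}$. The first case split is whether $\underline{\mathbf m}$ is periodic (only possible in the doubly-infinite case) or aperiodic.

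In the aperiodic/finite cases, the key point is that each weight space $M_{\mathbf m_i}$ is one-dimensional over $\Bbbk_{\mathbf m_i}$. To see this, note that by Lemma~\ref{lem731} we have $R v = \sigma(R)v$ for $v\in M_{\mathbf m_i}$, so $X$ acts on $M_{\mathbf m_i}$ essentially by $r+\mathbf m_i\mapsto \sigma(tr)$, i.e. $X$ restricted to a weight space has image spanned over $\Bbbk_{\mathbf m_{i+1}}$ by a single vector; dually for $Y$ using Lemma~\ref{lem3new}. Since $M$ is generated by any nonzero weight vector $v$ and $A=\sum_{i,j\ge 0} Y^i R X^j$ by \eqref{eq7}, every weight space of $M$ is of the form $R Y^a X^b v$ for appropriate $a,b$, hence spanned by one vector over its residue field; in particular $\dim_{\Bbbk_{\mathbf m_i}} M_{\mathbf m_i}=1$ (it is nonzero since $\mathbf m_i\in\supp M$, and aperiodicity guarantees no coincidences among the $\mathbf m_i$ so the generation argument does not collapse weight spaces together). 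Once the weight spaces are one-dimensional, I would check directly that the conditions $t\notin\mathbf m_i$ hold exactly where the string-module definitions require them: if $t\in\mathbf m_i$ for some non-extreme $i$, then $X$ kills $M_{\mathbf m_i}$ or $Y$ kills $M_{\mathbf m_{i+1}}$, and iterating Lemmata~\ref{lem1}, \ref{lem2}, \ref{lem3new} one extracts a proper nonzero submodule, contradicting simplicity (this is exactly the failure mode ruled out in Propositions~\ref{prop13}, \ref{prop14}). This forces $\underline{\mathbf m}\in Q_\infty$, ${}_\infty Q$, $Q_n$, or (in the aperiodic doubly-infinite case) $\underline{\mathbf m}$ has $t\notin\mathbf m_i$ for all $i$; and then the explicit one-dimensionality, together with the now-rigid action of $X$ and $Y$, lets me write down an isomorphism of $M$ with the corresponding $N^\uparrow$, $N^\downarrow$, $N_n$, or $N$ — one sends a chosen nonzero $v\in M_{\mathbf m_0}$ to $1+\mathbf m_0$ and extends by the actions of $X,Y$, using Lemmata~\ref{lem8}--\ref{lem11} to see the target really is these modules and Proposition~\ref{prop13}/\ref{prop14} that they are simple.

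The periodic doubly-infinite case is where the band modules appear and is the one requiring real work. Here $\underline{\mathbf m}$ has minimal period $k$, so $\mathbf m_i=\mathbf m_{i+k}$ and the weight spaces $M_{\mathbf m_0},\dots,M_{\mathbf m_{k-1}}$ need not be one-dimensional. The idea is: since $\supp(M)$ is finite (exactly $\{\mathbf m_0,\dots,\mathbf m_{k-1}\}$) by Proposition~\ref{prop16} in this case, and simplicity together with \eqref{eq7} shows $M$ is generated by $M_{\mathbf m_0}$, one looks at the operator $Z:=X^k$ (or $Y^k$) which maps $M_{\mathbf m_0}$ to itself. One checks $Z$ together with the $R=\Bbbk$-action makes $M_{\mathbf m_0}$ into a module over the skew Laurent ring $\mathbf P=\Bbbk[\alpha,\alpha^{-1},\overline\sigma]$ from Subsection~\ref{s4.2} — provided $X$ (equivalently $Y$, by Lemma~\ref{lem1} and the support being a cycle so $t\notin\mathbf m_i$ for all $i$) acts bijectively on all weight spaces, which one argues must hold or else a proper submodule appears. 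Simplicity of $M$ then forces $M_{\mathbf m_0}$ to be a simple $\mathbf P$-module, hence $\cong L_f=\mathbf P/\mathbf P f$ for an irreducible $f$; reconstructing the full $M$ from $M_{\mathbf m_0}$ via the bijective $Y$ gives $M\cong M^f$ as in Lemma~\ref{lem21}.

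The main obstacle is precisely this last case: one must carefully verify that in the periodic situation the actions of $X$ and $Y$ are bijective (so that the $\mathbf P$-module structure is genuinely defined, not merely that of a module over the ordinary skew polynomial ring), and that simplicity of the $A$-module $M$ is equivalent to simplicity of the $\mathbf P$-module $M_{\mathbf m_0}$ — the nontrivial direction being that an arbitrary nonzero $A$-submodule, intersected with $M_{\mathbf m_0}$, is a nonzero $\mathbf P$-submodule, which uses that $Y$ is bijective so every weight vector can be pushed into $M_{\mathbf m_0}$ without being killed, together with the fact that $\supp(M)$ cannot be strictly smaller than the full period. Once that correspondence is nailed down, invoking that $\mathbf P$ is a principal ideal domain (stated in Subsection~\ref{s4.2}) to get $M_{\mathbf m_0}\cong L_f$ finishes the proof.
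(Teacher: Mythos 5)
There is a genuine gap, and it sits exactly at the point the paper singles out as the novel feature of wGWAs. In your aperiodic case you claim that ``aperiodicity guarantees no coincidences among the $\mathbf{m}_i$'' and conclude that each weight space $M_{\mathbf{m}_i}$ is one-dimensional over $\Bbbk_{\mathbf{m}_i}$. This is false: by Lemma~\ref{lem5} (equivalently Lemma~\ref{lem12}) repetitions in a chain propagate only \emph{backwards}, so a doubly infinite aperiodic $\underline{\mathbf{m}}$ may perfectly well satisfy $\mathbf{m}_i=\mathbf{m}_j$ for infinitely many pairs $i\neq j$. Example~\ref{ex1257} is precisely such a situation ($f(h)=h^2$, $\theta_i=1$ for $i\le 0$): the module is a simple doubly infinite string module whose weight space at $\mathbf{m}_1$ is \emph{infinite dimensional}. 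So the central mechanism of your aperiodic argument --- identify $M$ with the string module by matching one-dimensional weight spaces --- breaks down; one must instead argue that $M$ is spanned by the single chain $\{Y^iv,\,v,\,X^iv\}$ indexed by \emph{position}, not by weight, which requires choosing the starting weight $\mathbf{m}$ so that no $Y^iv$ returns to it (possible by aperiodicity and backward determinism), a point you do not address. A second, smaller error occurs in your periodic case: ``the support being a cycle so $t\notin\mathbf{m}_i$ for all $i$'' is not true, and consequently $X$ need not act bijectively on a band module --- $M^f$ has $Y$ bijective but $X$ may kill vectors when $\sigma(t)$ lies in a weight (this is the paper's Subcase~2b, where $X$ is locally nilpotent and the module is still a band module). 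Your parenthetical ``(or $Y^k$)'' gestures at the fix, but the justification offered for bijectivity is wrong as stated.

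For comparison, the paper organizes the proof differently: it first establishes the dichotomy that $X$ and $Y$ each act either injectively or locally nilpotently on a simple weight module (Lemmata~\ref{lem24} and~\ref{lem25-01}), and then runs the case analysis over the four combinations, with band modules appearing whenever the support closes into a finite cycle, in any of the cases. Your support-first strategy via Proposition~\ref{prop16} could be made to work, but only after repairing the two points above; as written, the aperiodic half of the argument proves a false intermediate statement.
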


To prove this theorem we will need several lemmata.

\begin{lemma}\label{lem24}
The action of $X$ on a simple weight $A$-module is either injective or locally nilpotent. 
\end{lemma}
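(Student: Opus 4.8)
\textbf{Proof plan for Lemma~\ref{lem24}.}

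The plan is to analyze how the kernel of the $X$-action spreads across weight spaces, using Lemmata~\ref{lem1} and \ref{lem2}. Suppose $X$ is not injective on a simple weight module $M$; I want to show $X$ acts locally nilpotently, i.e. for every weight vector $v$ there is $k$ with $X^k v = 0$. First I would record the elementary fact that $\ker X = \bigoplus_{\mathbf{m}\in\mathrm{supp}(M)} \ker(X|_{M_{\mathbf{m}}})$, since $X$ respects the weight-space decomposition (it maps $M_{\mathbf{m}}$ into a sum of other weight spaces by Lemma~\ref{lem2}, so a sum of weight vectors is killed iff each summand is). Hence non-injectivity of $X$ means $X v = 0$ for some nonzero weight vector $v \in M_{\mathbf{m}}$ with $\mathbf{m}\in\mathrm{supp}(M)$; by Lemma~\ref{lem1} this forces $t\in\mathbf{m}$.

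The key step is to understand the submodule $N := \{w \in M : X^k w = 0 \text{ for } k \gg 0\}$, the locally $X$-nilpotent part of $M$. I would check $N$ is an $A$-submodule: it is clearly stable under $R$ and under $X$; for stability under $Y$, given $w\in N$ with $X^k w = 0$, I use that $X^{k+1} Y w = X^k (XY) w = X^k \sigma(t) w = \sigma(t) X^k w = 0$ (using \eqref{eq1} and that $X$ commutes with the $R$-action up to $\sigma$), so $Y w \in N$. By \eqref{eq7} this suffices, since $A = \sum Y^i R X^j$. Since $M$ is simple and $N \ni v \neq 0$, we get $N = M$, which is exactly the statement that $X$ acts locally nilpotently on $M$.

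\textbf{Main obstacle.} The one point requiring care is the computation $X^{k+1} Y w = \sigma(t) X^k w$: one must commute $X$ past $Y$ exactly once, producing $\sigma(t)$ (a central-acting element of $R$ after we have applied $X^k$, in the sense that scalars from $R$ slide past further applications of $X$ only after twisting by $\sigma$, but $\sigma(t)$ times $X^k w$ is legitimate since $\sigma(t)$ is just an element of $R$ acting on the weight vector $X^k w$). Concretely, $XY = \sigma(t)$ in $A$, so $X^{k+1}Yw = X^{k}(XYw) = X^{k}(\sigma(t)w)$, and then one wants $X^{k}(\sigma(t)w)=\sigma(t)X^{k}w$ up to relabelling — but in fact we only need $X^{k}(\sigma(t)w)=0$, which follows from $X^{k}(\sigma(t)w) = \sigma^{k}(\sigma(t)) X^k w$ (iterating $Xr = \sigma(r)X$) and $X^k w = 0$. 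So the argument goes through cleanly, and there is no real obstacle beyond keeping track of the $\sigma$-twists.
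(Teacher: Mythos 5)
Your argument is correct, and it is organized differently from the paper's, so a short comparison is worthwhile. The paper takes a nonzero weight vector $v$ with $Xv=0$ (the reduction to weight vectors uses Lemma~\ref{lem2} \emph{together with} Lemma~\ref{lem5} -- you need the latter to know that $XM_{\mathbf{m}}$ and $XM_{\mathbf{n}}$ for $\mathbf{m}\neq\mathbf{n}$ land in disjoint collections of weight spaces, so that no cancellation between the images can occur; citing Lemma~\ref{lem2} alone does not quite give your claim that $\ker X$ splits over weight spaces), then uses \eqref{eq7} to write $V=\sum_{i}Y^i\Bbbk_{\mathbf{m}}v$ and checks $X^{i+1}Y^i\Bbbk_{\mathbf{m}}v=0$ on this spanning set. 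You instead show that the locally $X$-nilpotent part $N$ is an $A$-submodule and invoke simplicity; the heart of both arguments is the same relation $X^{k+1}Yw=X^{k}\bigl(\sigma(t)w\bigr)=\sigma^{k+1}(t)X^{k}w$, so they are close cousins. What your packaging buys is that the entire first paragraph (the reduction to a weight kernel vector, and hence the appeal to Lemmata~\ref{lem1}, \ref{lem2}, \ref{lem5}) becomes unnecessary: non-injectivity gives some nonzero $v\in N$, and $N=M$ follows at once; in fact your proof does not use that $M$ is a weight module at all and proves the dichotomy for arbitrary simple $A$-modules. The paper's version, on the other hand, produces the explicit spanning set $\sum_i Y^i\Bbbk_{\mathbf{m}}v$, which is reused later in the proofs of Lemma~\ref{lem25-01} and Theorem~\ref{thm23}, so it carries extra information that your more economical argument does not record.
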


\begin{proof}
Assume that $V$ is a simple weight $A$-module and
$v\in V$ is nonzero and such that $Xv=0$. Write $v=\sum_{\mathbf{m}}v_{\mathbf{m}}$,
where $v_{\mathbf{m}}\in V_{\mathbf{m}}$. From Lemmata~\ref{lem2} and \ref{lem5} it follows that 
for $\mathbf{m}\neq\mathbf{n}$ the weights of the vectors $Xv_{\mathbf{m}}$ and $Xv_{\mathbf{n}}$ do 
not intersect. Hence $Xv=0$ implies $Xv_{\mathbf{m}}=0$ for all ${\mathbf{m}}$. Therefore we may assume
that $v$ is a weight vector, say of weight $\mathbf{m}$.

From \eqref{eq7} it then follows that $V=Av=\sum_{i\in\mathbb{Z}_+}Y^i\Bbbk_{\mathbf{m}}v$. From
\eqref{eq1} and $Xv=0$ we obtain that $X^{i+1}Y^i\Bbbk_{\mathbf{m}}v=0$. The claim follows.
\end{proof}

\begin{lemma}\label{lem25-01}
The action of $Y$ on a simple weight $A$-module is either injective or locally nilpotent. 
\end{lemma}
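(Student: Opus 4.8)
The plan is to mimic the proof of Lemma~\ref{lem24}, replacing $X$ with $Y$ throughout, but being careful about the subtle asymmetry between $X$ and $Y$ in the relations \eqref{eq1}. Let $V$ be a simple weight $A$-module and suppose there is a nonzero $v\in V$ with $Yv=0$; the goal is to show that $Y$ then acts locally nilpotently on all of $V$.

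First I would reduce to the case where $v$ is a weight vector. Writing $v=\sum_{\mathbf{m}}v_{\mathbf{m}}$ with $v_{\mathbf{m}}\in V_{\mathbf{m}}$, I want to argue that $Yv=0$ forces $Yv_{\mathbf{m}}=0$ for each $\mathbf{m}$. This is where some care is needed: the clean analogue of Lemmata~\ref{lem2} and~\ref{lem5} used in Lemma~\ref{lem24} is Lemma~\ref{lem3new}, which says that if $YM_{\mathbf{m}}\neq 0$ then $YM_{\mathbf{m}}\subset M_{\mathbf{n}}$ for the unique $\mathbf{n}$ with $\sigma(\mathbf{n})\subset\mathbf{m}$. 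For distinct $\mathbf{m}\neq\mathbf{m}'$ in $\supp(V)$ with $Yv_{\mathbf{m}}\neq 0$ and $Yv_{\mathbf{m}'}\neq 0$, the corresponding targets $\mathbf{n},\mathbf{n}'$ satisfy $\sigma(\mathbf{n})\subset\mathbf{m}$ and $\sigma(\mathbf{n}')\subset\mathbf{m}'$; since $\mathbf{m}\neq\mathbf{m}'$ and each of $\mathbf{n},\mathbf{n}'$ has a unique "$\sigma$-image target" again by Lemma~\ref{lem5}, we get $\mathbf{n}\neq\mathbf{n}'$, so the weight spaces of $Yv_{\mathbf{m}}$ and $Yv_{\mathbf{m}'}$ are disjoint. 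Hence $Yv=0$ implies $Yv_{\mathbf{m}}=0$ for all $\mathbf{m}$, and we may assume $v\in V_{\mathbf{m}}$ is a weight vector.

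Next, using \eqref{eq7} and simplicity, $V=Av=\sum_{i,j\in\mathbb{Z}_+}Y^iRX^j v$. Since $v$ is a weight vector, Lemma~\ref{lem731} gives $RX^jv=\sigma(R)X^jv$, and in any case $RX^jv$ lies in finitely many weight spaces, so it suffices to show $Y$ acts nilpotently on each $Y^iRX^jv$, and for that it suffices to show $Y$ acts nilpotently on each $X^jv$ (since $Y^i$ of a $Y$-nilpotent vector is again $Y$-nilpotent). So fix $j$ and consider $w:=X^jv$. I compute $Y^{j+1}w=Y^{j+1}X^jv$. Using the relations \eqref{eq1}, $YX=t$ and $Xr=\sigma(r)X$, one rewrites $Y^{j+1}X^j$ by repeatedly pulling $Y$'s past $X$'s: each $YX$ contracts to multiplication by an element of $R$ (the appropriate $\sigma$-shift of $t$), leaving one extra $Y$ acting on $v$, which kills it since $Yv=0$. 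Concretely, $Y^{j+1}X^j v = Y\cdot(Y^jX^j)v = Y\cdot(\text{something in } Rv) = (\text{something in }R)\cdot Yv = 0$, where $Y^jX^jv\in Rv$ follows by an easy induction from $YX^{k}v = \sigma^{?}(t)X^{k-1}v$ type identities (the exact $R$-coefficients are irrelevant). Thus $w=X^jv$ is $Y$-nilpotent, and the claim follows.

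The main obstacle I anticipate is the first step — the reduction to a weight vector — because unlike the $X$-case, where Lemma~\ref{lem2} handles the target weights uniformly, here one must invoke Lemma~\ref{lem3new} and the uniqueness in Lemma~\ref{lem5} in the correct direction; one should double-check that the hypothesis "$YM_{\mathbf{m}}\neq 0$" of Lemma~\ref{lem3new} is exactly what is available for each component $v_{\mathbf{m}}$ with $Yv_{\mathbf{m}}\neq 0$, and that the map $\mathbf{m}\mapsto\mathbf{n}$ (defined on those $\mathbf{m}$ admitting a $\sigma$-preimage in $\supp V$) is injective. Everything after that is a routine computation with \eqref{eq1} and \eqref{eq7}, entirely parallel to Lemma~\ref{lem24}, so the honest write-up can reasonably be abbreviated to "\emph{mutatis mutandis} the proof of Lemma~\ref{lem24}, using Lemma~\ref{lem3new} in place of Lemmata~\ref{lem2} and~\ref{lem5}."
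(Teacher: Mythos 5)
You have correctly isolated the delicate step (the reduction to a weight vector), but your resolution of it is wrong, and this is a genuine gap rather than a checkable detail. Your argument rests on the claim that if $\sigma(\mathbf{n})\subset\mathbf{m}$ and $\sigma(\mathbf{n}')\subset\mathbf{m}'$ with $\mathbf{m}\neq\mathbf{m}'$, then $\mathbf{n}\neq\mathbf{n}'$, which you attribute to Lemma~\ref{lem5}. But Lemma~\ref{lem5} gives uniqueness in the opposite direction: for a fixed $\mathbf{m}$ there is at most one $\mathbf{n}$ with $\sigma(\mathbf{n})\subset\mathbf{m}$. Since $\sigma$ is only an endomorphism, $\sigma(\mathbf{n})$ need not generate a maximal ideal and can perfectly well be contained in several distinct maximal ideals: take $R=\mathbb{C}[h]$, $\sigma(h)=h^2$ and $\mathbf{n}=(h-1)$; then the ideal generated by $\sigma(\mathbf{n})$ is $(h^2-1)$, which lies in both $(h-1)$ and $(h+1)$. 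So the map $\mathbf{m}\mapsto\mathbf{n}$ is \emph{not} injective, the vectors $Yv_{\mathbf{m}}$ and $Yv_{\mathbf{m}'}$ can land in the \emph{same} weight space, and a priori they can cancel. This is exactly the configuration to which the paper devotes its entire second case: there $v=v_1+v_2$ with $v_1,v_2$ in the two weight spaces $V_{\mathbf{n}_1},V_{\mathbf{n}_2}$ both containing $\sigma(\mathbf{m})$, and ruling out $Yv_1=-Yv_2\neq 0$ requires essentiality (to rewrite $Yv_1=Yrv_2$), an application of $X$, Lemma~\ref{lem24}, and Proposition~\ref{prop16} to reach a contradiction. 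None of that is present in your argument.

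The second half is also not the routine mirror of Lemma~\ref{lem24} that you claim. There the identity $X^{i+1}Y^iRv=0$ is purely algebraic because both relations $XY=\sigma(t)$ and $Xr=\sigma(r)X$ push ring elements to the \emph{left} of $X$. For $Y$ the analogous manipulation requires rewriting $Yr$ as $sY$ with $\sigma(s)=r$, which is impossible inside $A$ when $r\notin\sigma(R)$; on module elements essentiality rescues you only one weight space at a time, and $X^{j}v$ is in general \emph{not} a weight vector (its weight components can spread over several maximal ideals containing $\sigma(\mathbf{m})$), so the correcting elements $s$ differ from component to component and your induction ``$Y^jX^jv\in Rv$'' does not close up. The paper sidesteps this by using Proposition~\ref{prop16} to split into the unbranched case, where every $X^iv$ stays a weight vector and $\sum_iX^i\Bbbk_{\mathbf{m}}v$ is visibly a submodule on which $Y$ is locally nilpotent, and the branched case treated separately. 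The case analysis driven by Proposition~\ref{prop16} is the substance of the proof, so it cannot honestly be abbreviated to ``mutatis mutandis Lemma~\ref{lem24}''.
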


\begin{proof}
Let $V$ be a simple weight $A$-module. We have a rough description of $\mathrm{supp}(M)$ given in 
Proposition~\ref{prop16}. According to this description, we have one of the following two cases.

{\bf Case 1.} Assume that for any $\mathbf{m}\in \mathrm{supp}(M)$ there exists at most one
$\mathbf{n}\in \mathrm{supp}(M)$ such that $\sigma(\mathbf{m})\subset\mathbf{n}$. 
Assume that $v\in V$ is nonzero and such that $Yv=0$. Similarly to the proof of Lemma~\ref{lem24},
we may assume that $v$ is weight. We claim that $V=\sum_{i\in\mathbb{Z}_+}X^i\Bbbk_{\mathbf{m}}v$.
Denote the right hand side by $N$, it is clearly nonzero and stable under the action of $X$.
From our assumption for Case~1 we also have that all $X^iv$ are weight vectors. As $\sigma$ is
essential, using \eqref{eq1} it follows that $RX^iv=X^i\Bbbk_{\mathbf{m}}v$ and hence
$N$ is stable under the action of $R$. The latter and $Yv=0$ implies
that $N$ is a non-trivial $A$-submodule of $V$ and hence coincides with $V$ as $V$ is simple.
It is easy to check that $Y$ acts locally nilpotently on $N$.

{\bf Case 2.} There is a unique $\mathbf{m}\in \mathrm{supp}(M)$ for which there is more than one
$\mathbf{n}\in \mathrm{supp}(M)$ such that $\sigma(\mathbf{m})\subset\mathbf{n}$, more precisely,
there are exactly two different $\mathbf{n}_1,\mathbf{n}_2\in \mathrm{supp}(M)$ such that 
$\sigma(\mathbf{m})\subset\mathbf{n}_1$ and $\sigma(\mathbf{m})\subset\mathbf{n}_2$. 
Assume that $v\in V$ is nonzero and such that $Yv=0$. We want to show that we may assume that 
$v$ is weight. The only case when for this statement the argument used before does not work is
when $v=v_{1}+v_2$ with $v_i\in V_{\mathbf{n}_i}$, $i=1,2$. We nevertheless claim that 
$Yv_i=0$ for $i=1,2$. Indeed, if not, then we have $Yv_1=-Yv_2$. As $\sigma$ is essential, 
we can rewrite this as $Yv_1=Yrv_2$ for some $r\in R$. Applying $X$ we get $XYv_1=XYrv_2$.
We have $XYv_1\in V_{\mathbf{n}_1}$ and $XYrv_2\in V_{\mathbf{n}_2}$ which implies 
$XYv_1=XYrv_2=0$ since $\mathbf{n}_1\neq \mathbf{n}_2$. If $Yv_1\neq 0$, then from Lemma~\ref{lem24}
we have that $X$ acts locally nilpotently on $V$ and $V=\sum_{i\in\mathbb{Z}_+}Y^i\Bbbk_{\mathbf{m}}Yv_1$.
Combining Proposition~\ref{prop16} with Lemma~\ref{lem3new} it thus follows that $V$ falls into
Case~1, a contradiction. Therefore we may assume that $v$ is a weight vector.

From Lemma~\ref{lem1} it thus follows that there exists $\mathbf{k}\in\mathrm{supp}(M)$ such that 
$\sigma(t)\in \mathbf{k}$. If $Y M_{\mathbf{k}}=0$, the action of $Y$ on $M$ is clearly locally
nilpotent. If $Y M_{\mathbf{k}}\neq 0$, then $Y M_{\mathbf{k}}\subset M_{\mathbf{l}}$
for some $\mathbf{l}\in\mathrm{supp}(M)$ and there is a nonzero $w\in M_{\mathbf{l}}$ such that
$Xv=0$. From the proof of Lemma~\ref{lem24} it follows that in this case $M=\sum_{i\geq 0}Y^iRM_{\mathbf{l}}$.
As one of the weights $\mathbf{n}_1$ or $\mathbf{n}_2$ cannot belong to the support of 
$\sum_{i\geq 0}Y^iRM_{\mathbf{l}}$, this case does not occur. The claim of the lemma follows.
\end{proof}

\begin{proof}[Proof of Theorem~\ref{thm23}.]
Let $V$ be a simple weight $A$-module. We prove the result using a case-by-case analysis.

{\bf Case~1.} Assume that the action of both $X$ and $Y$ on $V$ is locally nilpotent. Let further
$v\in V$ be a weight element such that $Xv=0$. Then from the proof of Lemma~\ref{lem24} we have that 
$V=\sum_{i=0}^k Y^iRv$ for some minimal $k$ and it is straightforward to check that $V$ is a bounded
string module.

{\bf Case~2.} Assume that the action of $X$ on $V$ is locally nilpotent while the action of $Y$ on $V$
is injective. Let $v\in V$ be a weight element, say of weight $\mathbf{m}$, such that $Xv=0$.

{\bf Subcase~2a.} Assume further that for each $i>0$ the weight of the weight element $Y^iv$ is different
from $\mathbf{m}$. Let $I$ be the left ideal of $A$ generated by $\mathbf{m}$ and $X$. Then $A/I$
surjects onto $V$ via the unique map sending $1+I$ to $v$. At the same time, using the action of $A$
on $A/I$ in the basis $\{Y^iv:i\in\mathbb{Z}_+\}$ it is clear that $A/I$ is a right bounded string
module. Moreover, $A/I$ is simple for otherwise it would have a string submodule and the quotient
(which would also surject onto $V$) would be a bounded string module.

{\bf Subcase~2b.} Assume that there is a minimal $k>0$ such that the weight of the weight element 
$Y^kv$ equals $\mathbf{m}$. Let $I$ be the left ideal of $A$ generated by $\mathbf{m}$ and $X$. Then $A/I$
surjects onto $V$ via the unique map sending $1+I$ to $v$. Using the action of $A$
on $A/I$ in the basis $\{Y^iv:i\in\mathbb{Z}_+\}$ it is clear that $A/I$ is a right bounded string
module. However, this string module is not simple as $t\in \mathbf{m}$. Therefore $V$ is a quotient of 
the weight module $A/I$ modulo some proper weight submodule. This implies that for every 
$\mathbf{n}\in \mathrm{supp}(V)$ the space $V_\mathbf{n}$ is finite dimensional over $\Bbbk_{\mathbf{n}}$.
As the action of $Y$ is injective, it follows that all weight spaces have the same dimension and thus
the action of $Y$ is bijective. Now it clear that $V$ is a band module.

{\bf Case~3.} Assume that the action of $Y$ on $V$ is locally nilpotent while the action of $X$ on $V$
is injective. Let $v\in V$ be a weight element, say of weight $\mathbf{m}$, such that $Yv=0$.

{\bf Subcase~3a.} Assume that $\mathrm{supp}(V)$ is infinite. Then from Proposition~\ref{prop16} it follows
that for each $i\in\mathbb{Z}_+$ the element $X^{i}v$ is a weight element, say of weight $\mathbf{m}_i$, and,
moreover, $\mathbf{m}_i\neq \mathbf{m}_j$ if $i\neq j$. The linear span of $RX^{i}v$, $i\in\mathbb{Z}_+$, is 
invariant with respect to the action of $Y$ and hence coincides with $V$. Therefore $V$ is a left bounded 
string module.

{\bf Subcase~3b.} Assume that $\mathrm{supp}(V)$ is finite. Then, using the same arguments to the ones used
for Subcase~2b, one shows that $V$ is a band module.

{\bf Case~4.} Assume that the action of both $Y$ and $X$ on $V$ are injective. 

{\bf Subcase~4a.} Assume that $\mathrm{supp}(V)$ is infinite. Using Proposition~\ref{prop16} we may choose
$\mathbf{m}\in \mathrm{supp}(V)$ such that for any $i\in\mathbb{N}$ the weight of the weight element $Y^iv$,
where $v$ is a nonzero element in $V_{\mathbf{m}}$ is different from $\mathbf{m}$. Then, again from
Proposition~\ref{prop16}, it follows that $X^iv$ is a weight vector for each $i\in\mathbb{N}$. Using the
fact that $\sigma$ is essential one now checks that the linear span of $v$ and $X^iv,Y^iv$, $i\in\mathbb{N}$,
is a submodule and hence coincides with $V$. It follows that $V$ is a doubly infinite string module.

{\bf Subcase~4b.} Assume that $\mathrm{supp}(V)$ is finite. Then, using the same arguments to the ones used
for Subcase~2b, one shows that $V$ is a band module. The claim of the theorem follows.
\end{proof}

\subsection{Weight modules that are of finite length over $R$}\label{s4.4}

Let $V$ be a simple weight $A$-module. A natural analogue of finite dimensionality for $V$
is the requirement that $V$ is of finite length when viewed as an $R$-module. Directly from Theorem~\ref{thm23}
we get:

\begin{corollary}\label{cor24}
The only simple weight $A$-modules which are of finite length over $R$
are bounded string modules and band modules. 
\end{corollary}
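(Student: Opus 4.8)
The plan is to read off the conclusion from the classification in Theorem~\ref{thm23}, which already reduces us to checking, among the five types of string modules and the two types of band modules, precisely which ones can have finite length over $R$. First I would recall that a weight $A$-module $M$ has length over $R$ equal to $\sum_{\mathbf{m}\in\mathrm{supp}(M)}\ell_R(M_{\mathbf{m}})$, since $M=\bigoplus_{\mathbf{m}}M_{\mathbf{m}}$ as an $R$-module and each $M_{\mathbf{m}}$, being annihilated by the maximal ideal $\mathbf{m}$, is a $\Bbbk_{\mathbf{m}}$-vector space whose $R$-length is just its $\Bbbk_{\mathbf{m}}$-dimension. In particular a simple weight module is of finite length over $R$ if and only if its support is finite \emph{and} every weight space is finite-dimensional over the corresponding residue field.

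Next I would go through the types. For the double infinite string modules $N(\underline{\mathbf{m}})$, the right infinite $N^{\uparrow}(\underline{\mathbf{m}})$, and the left infinite $N^{\downarrow}(\underline{\mathbf{m}})$, the support is by construction infinite (indexed by $\mathbb{Z}$, $\mathbb{Z}_+$, or $\mathbb{Z}_-$ respectively, and the weights are pairwise distinct when the module is simple, e.g.\ by Lemma~\ref{lem12} in the doubly-infinite case), so these are never of finite length over $R$. For the bounded string modules $N_n(\underline{\mathbf{m}})$ the support is the finite set $\{\mathbf{m}_0,\dots,\mathbf{m}_n\}$ and each weight space is one-dimensional over the residue field, so $\ell_R(N_n(\underline{\mathbf{m}}))=n+1<\infty$. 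For the band modules $M^f$ and $N^f$, the support has exactly $k$ elements (the minimal period), and each weight space is ${}^{\tau_i^{\pm 1}}L_f$, which is finite-dimensional over $\Bbbk$ since $L_f=\mathbf{P}/\mathbf{P}f$ is finite-dimensional over $\Bbbk$ (as noted in Subsection~\ref{s4.2}); hence band modules are of finite length over $R$. Combining these observations with Theorem~\ref{thm23} gives exactly the claimed list.

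I do not anticipate a serious obstacle here: the corollary is essentially bookkeeping once Theorem~\ref{thm23} is in hand. The one point that deserves a sentence of care is the reduction ``finite length over $R$ $\Longleftrightarrow$ finite support with finite-dimensional weight spaces'': this uses that $\mathrm{supp}(M)$ consists of maximal ideals, so the isotypic decomposition as an $R$-module is exactly the weight-space decomposition, and an infinite support forces infinite length because it yields infinitely many non-isomorphic simple $R$-subquotients $\Bbbk_{\mathbf{m}}$. With that remark recorded, the proof is just the case analysis above, and I would write it as a short paragraph rather than a formal enumeration.
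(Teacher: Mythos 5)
Your proposal is correct and follows the same route as the paper, which simply records the corollary as an immediate consequence of Theorem~\ref{thm23}: the infinite string modules are infinite direct sums of nonzero $R$-modules and hence of infinite length, while bounded string and band modules have finitely many weight spaces each of finite length over $R$. One side remark should be dropped or weakened: the weights of a simple infinite string module need \emph{not} be pairwise distinct (the paper's Example~\ref{ex1257} exhibits a simple $N(\underline{\mathbf{m}})$ with an infinite-dimensional weight space, i.e.\ infinitely many repeated weights); this does not affect your conclusion, since $\bigoplus_{i}\Bbbk_{\mathbf{m}_i}$ over an infinite index set has infinite length over $R$ regardless of repetitions, but the justification should be phrased in terms of the infinite index set rather than the distinctness of the $\mathbf{m}_i$.
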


\section{Simple finite dimensional modules}\label{s5}

\subsection{Setup and definition}\label{s5.1}

Let $A$ be a wGWA. In this section we assume that $R$ is an algebra over some field $\Bbbk$ and
that $\sigma$ is an algebra endomorphism, in particular, that it is $\Bbbk$-linear.
Then every $A$-module $M$ carries the natural structure of a vector space over $\Bbbk$ and 
we say that $M$ is  {\em finite dimensional} if it  is finite dimensional over $\Bbbk$. 

From the definition of a finite dimensional $A$-module it is clear that each 
finite dimensional $A$-module is a generalized weight module and that the category of
finite dimensional $A$-modules is Krull-Schmidt. In Theorem~\ref{thm41} below we will show that
simple finite dimensional $A$-modules are, in fact, weight modules.

\subsection{Simple finite dimensional modules}\label{s5.2}

Our main result in this section is the following statement which reduces classification of simple
finite dimensional $A$-module to Theorem~\ref{thm23}.

\begin{theorem}\label{thm41}
Assume that $R$ is an algebra over some field $\Bbbk$ and that $\sigma$ is essential and
$\Bbbk$-linear. Then every simple  finite dimensional $A$-module is a weight module.
\end{theorem}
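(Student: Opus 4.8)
The plan is to show that a simple finite dimensional $A$-module $M$ is a weight module, i.e.\ that $M=\bigoplus_{\mathbf{m}}M_{\mathbf{m}}$. Since $M$ is finite dimensional, it is automatically a generalized weight module, so $M=\bigoplus_{\mathbf{m}\in\mathrm{Max}(R)}M^{\mathbf{m}}$, the sum being over a finite set $\{\mathbf{m}^{(1)},\dots,\mathbf{m}^{(r)}\}$. First I would reduce to the case where this sum has a single summand. Lemma~\ref{lem2} shows $X M^{\mathbf{m}}\subset\bigoplus_{\mathbf{n}:\sigma(\mathbf{m})\subset\mathbf{n}}M^{\mathbf{n}}$ and (the generalized-weight analogue of) Lemma~\ref{lem3new}, together with Lemma~\ref{lem5}, shows $Y M^{\mathbf{m}}$ lands in a single generalized weight space; since $R$ preserves each $M^{\mathbf{m}}$, the same orbit-decomposition argument as in Lemma~\ref{lem15} partitions $\{\mathbf{m}^{(i)}\}$ into $\sim$-classes and splits $M$ as an $A$-module accordingly. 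By simplicity, only one class occurs, so I may assume $\mathrm{supp}$ of $M$ (as a generalized weight module) is contained in a single class $S$.

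Next I would exploit essentiality to kill the "generalized" part, i.e.\ to prove $M^{\mathbf{m}}=M_{\mathbf{m}}$ for each $\mathbf{m}$. Consider the socle $\mathrm{soc}(M^{\mathbf{m}})=M_{\mathbf{m}}$ of $M^{\mathbf{m}}$ as an $R$-module; set $N:=\bigoplus_{\mathbf{m}}M_{\mathbf{m}}$, the "weight part" of $M$. The key point is that $N$ is $A$-stable. For $X$: if $v\in M_{\mathbf{m}}$ then $\mathbf{m}v=0$, so by \eqref{eq1} $\sigma(\mathbf{m})Xv=X\mathbf{m}v=0$, and by essentiality $\sigma(\mathbf{m})$ generates (the image in) $R/\mathbf{n}$ a maximal ideal modulo each $\mathbf{n}\supset\sigma(\mathbf{m})$, forcing $\mathbf{n}Xv=0$; hence $Xv\in N$. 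For $Y$: if $v\in M_{\mathbf{m}}$, then for $\mathbf{n}$ with $\sigma(\mathbf{n})\subset\mathbf{m}$ (the only possibility by Lemma~\ref{lem5}), $\mathbf{n}Yv=Y\sigma(\mathbf{n})v=0$, so $Yv\in M_{\mathbf{n}}\subset N$; and if no such $\mathbf{n}$ exists then $Yv=0$ by the argument in Lemma~\ref{lem3new} (which uses essentiality). And $R$ visibly preserves $N$. Since $N\neq 0$ (a nonzero finite-dimensional $R$-module over a commutative ring has nonzero socle-like piece: pick any $\mathbf{m}$ with $M^{\mathbf{m}}\neq 0$, then $M_{\mathbf{m}}\neq 0$), simplicity of $M$ gives $N=M$, which is exactly the assertion that $M$ is a weight module.

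The main obstacle I anticipate is the $Y$-step of the $A$-stability argument: showing that when $v$ is a genuine weight vector of weight $\mathbf{m}$, the vector $Yv$ is again a genuine weight vector (annihilated by some maximal ideal, not merely by a power of one). This is precisely the content of the essentiality arguments in Lemmata~\ref{lem3}, \ref{lem3new}, so I would isolate a generalized-weight-module version of Lemma~\ref{lem3new}: \emph{for essential $\sigma$, if $M$ is a generalized weight $A$-module and $v\in M_{\mathbf{m}}$, then $Yv\in M_{\mathbf{n}}$ for the unique $\mathbf{n}$ with $\sigma(\mathbf{n})\subset\mathbf{m}$, or $Yv=0$}. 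The proof is verbatim that of Lemma~\ref{lem3new}: write $Yv=\sum w_i$ with $\mathbf{n}_iw_i=0$, peel off one summand using $\mathbf{n}_kYv=Y\sigma(\mathbf{n}_k)v$, reduce to $k=1$, and if $\sigma(\mathbf{n}_1)\not\subset\mathbf{m}$ derive a contradiction via \eqref{eq5} exactly as there. Once this lemma is in hand the rest is the bookkeeping sketched above. A secondary (routine) point to verify is that $M_{\mathbf{m}}\neq 0$ whenever $M^{\mathbf{m}}\neq 0$, which holds because $M^{\mathbf{m}}$ is a nonzero module over the Noetherian local-modulo-$\mathbf{m}$ situation and $\mathbf{m}$ acts nilpotently on it, hence has a nonzero fixed subspace.
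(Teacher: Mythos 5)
Your overall strategy---showing that the weight part $\mathrm{wt}(M)=\bigoplus_{\mathbf{m}}M_{\mathbf{m}}$ is a nonzero $A$-submodule of the simple module $M$ and concluding by simplicity---is close in spirit to the paper's, and several of your steps are sound: $M_{\mathbf{m}}\neq 0$ whenever $M^{\mathbf{m}}\neq 0$ (because $\mathbf{m}$ acts nilpotently on the finite dimensional space $M^{\mathbf{m}}$), and the claim that $Y$ preserves $\mathrm{wt}(M)$ does go through via the essentiality argument of Lemma~\ref{lem3new}; this is exactly what the paper records in Subsection~\ref{s5.3}.

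The $X$-step, however, is a genuine gap, and it is precisely the point the paper's proof is built to circumvent. From $v\in M_{\mathbf{m}}$ you only get $\sigma(\mathbf{m})Xv=0$, so each generalized weight component $w\in M^{\mathbf{n}}$ of $Xv$ satisfies $R\sigma(\mathbf{m})R\cdot w=0$ and $\mathbf{n}^Nw=0$ for some $N$; this does not force $\mathbf{n}w=0$. The ideal $R\sigma(\mathbf{m})R$ is in general strictly smaller than $\mathbf{n}$, and even $R\sigma(\mathbf{m})R+\mathbf{n}^N$ need not contain $\mathbf{n}$: take $R=\Bbbk[x,y]$ with $\sigma(x)=x$, $\sigma(y)=0$ (an essential endomorphism) and $\mathbf{m}=\mathbf{n}=(x-1,y)$; then $R\sigma(\mathbf{m})R=(x-1)$ and $(x-1)+\mathbf{n}^2$ does not contain $y$. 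Essentiality controls $\sigma(R)$ modulo $\mathbf{n}$, not modulo $\mathbf{n}^2$, so the phrase ``forcing $\mathbf{n}Xv=0$'' is unjustified, and without it $\mathrm{wt}(M)$ is only known to be stable under $R$ and $Y$, which is not enough to invoke simplicity. The paper never asserts a priori that $X$ preserves $\mathrm{wt}(M)$. Instead it argues by cases: if some nonzero weight vector $v$ satisfies $Xv=0$ (which happens whenever the support graph has a vertex that is not a target, or whenever $X$ has nontrivial kernel on some $M^{\mathbf{n}_i}$, since $XRv=\sigma(R)Xv$ lets one move the kernel vector into $M_{\mathbf{n}_i}$), then \eqref{eq7} gives $M=\sum_{i}Y^iRv$, and only $Y$ and $R$---which do preserve $\mathrm{wt}(M)$---are needed; if instead $X$ is injective, hence bijective by finite dimensionality, the support graph is an oriented cycle and a dimension count ($X^{-1}M_{\mathbf{n}_i}\subset M_{\mathbf{n}_{i-1}}$ gives $l_{i-1}\geq l_i$, and cyclicity forces all $l_i$ equal) shows that $X$ maps $\mathrm{wt}(M)$ onto itself after all. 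You need one of these two mechanisms (or a substitute) to close the $X$-step; the annihilator argument alone does not.
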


\subsection{Preliminary observations}\label{s5.3}

As usual, we assume that $\sigma$ is essential.
Let $M$ be a finite dimensional $A$-module. Then we have
\begin{displaymath}
M=\bigoplus_{\mathbf{m}\in\mathrm{supp}(M)}M^{\mathbf{m}}, 
\end{displaymath}
$\mathrm{supp}(M)$ is finite and, moreover, each $M^{\mathbf{m}}$ is finite dimensional over $\Bbbk$, in particular,
there is $k\in\mathbb{N}$ such that $M^{\mathbf{m}}$ is annihilated by $\mathbf{m}^k$. Further, 
using the fact that maximal ideals are prime, we obtain
\begin{equation}\label{eq42}
X M^{\mathbf{m}} \subset \bigoplus_{\mathbf{n}:\sigma(\mathbf{m})\subset \mathbf{n}} M^{\mathbf{n}}
\quad\text{ and }\quad  
Y M^{\mathbf{m}} \subset \bigoplus_{\mathbf{n}:\sigma(\mathbf{n})\subset \mathbf{m}} M^{\mathbf{n}}
\end{equation}
by exactly the same arguments as used in the proofs of  Lemma~\ref{lem2} and Lemma~\ref{lem5}, respectively.
From Lemma~\ref{lem5} we know that the direct sum on the right in \eqref{eq42} has at most one summand.
Similarly to Lemma~\ref{lem1} we have that $Xv=0$ for some $v\in M^{\mathbf{m}}$, $v\neq 0$, implies
$t\in\mathbf{m}$ and, analogously,  $Yv=0$ implies $\sigma(t)\in\mathbf{m}$.

Note that, if $\sigma(\mathbf{n})\subset \mathbf{m}$, then $Y$ maps $M^{\mathbf{m}}$ to $M^{\mathbf{n}}$
by the above and, moreover, it maps $M_{\mathbf{m}}$ to $M_{\mathbf{n}}$ by the obvious computation
using \eqref{eq1}. In other words, the action of $Y$ preserves the weight part 
\begin{displaymath}
\mathrm{wt}(M):= \bigoplus_{\mathbf{m}\in\mathrm{supp}(M)}M_{\mathbf{m}}
\end{displaymath}
of $M$.

Let $\Gamma$ be the finite oriented graph whose vertices are elements of the finite set $\mathrm{supp}(M)$
and arrows $\mathbf{n}\leftarrow \mathbf{m}$ are defined as in Subsection~\ref{s3.1}, i.e. represent
the inclusion $\sigma(\mathbf{n})\subset \mathbf{m}$. By  Lemma~\ref{lem8}, each vertex of our graph is
the source of at most one arrow, in particular, the number of arrows does not exceed the number of vertices.

\subsection{Proof of Theorem~\ref{thm41}}\label{s5.4}

Consider first the case when $\Gamma$ contains some vertex $\mathbf{n}$ which is not the target of any 
arrow in $\Gamma$. Let $v\in M_{\mathbf{m}}$ be non-zero. Then $Xv=0$ by \eqref{eq42} and hence \eqref{eq7}
implies 
\begin{displaymath}
M=\sum_{i\in\mathbb{Z}_+}Y^iRv.
\end{displaymath}
Now $Rv\subset M_{\mathbf{m}}$ and since the action of $Y$ preserves $\mathrm{wt}(M)$, we obtain
that $M=\mathrm{wt}(M)$ is a weight module.

Assume now that each vertex in $\Gamma$ is the target of some arrow. Then the number of arrows and
vertices in $\Gamma$ coincide and hence each vertex is the source of exactly one arrow and the target
of exactly one arrow. From \eqref{eq42} and simplicity of $M$ it follows that  $\Gamma$ is an oriented cycle,
say
\begin{displaymath}
\xymatrix{
\mathbf{n}_1\ar@/^15pt/[rrrr]& \mathbf{n}_2\ar[l]& \mathbf{n}_3\ar[l]& \dots\ar[l]
& \mathbf{n}_k.\ar[l]
}
\end{displaymath}
For $i=1,2,\dots,k$ set $l_i:=\dim_{\Bbbk}M_{\mathbf{n}_i}$.

If some $M^{\mathbf{n}_i}$ contains a non-zero $v$ such that $Xv=0$, then we have $XRv=\sigma(R)Xv=0$ by \eqref{eq1}.
Therefore $M_{\mathbf{n}_i}$ contains a non-zero $w$ such that $Xw=0$ and the same arguments as above show that
$M$ is a weight module.

It remains to consider the case when $X$ acts injectively on $M$. Since $M$ is finite dimensional over
$\Bbbk$ and $X$ is $\Bbbk$-linear (as $\sigma$ is $\Bbbk$-linear), we have that $X$ acts bijectively on $M$. 
Then any $v\in M_{\mathbf{n}_i}$ is of the form $v=Xw$ for some $w\in M^{\mathbf{n}_{i-1}}$
(by convention, we have  $\mathbf{n}_{0}=\mathbf{n}_{k}$). Using \eqref{eq1}, we compute:
\begin{displaymath}
X\mathbf{n}_{i-1}w=\sigma(\mathbf{n}_{i-1})Xw\subset \mathbf{n}_iv=0.
\end{displaymath}
As $X$ acts injectively, we obtain $\mathbf{n}_{i-1}w=0$, which implies $w\in M_{\mathbf{n}_{i-1}}$.
Therefore $l_{i-1}\geq l_i$ and the fact that the graph $\Gamma$ is a cycle implies $l_1=l_2=\dots=l_k$. 
This means that  the action of $X$ preserves $\mathrm{wt}(M)$. Since the action of $R$ obviously preserves 
$\mathrm{wt}(M)$, it follows that $\mathrm{wt}(M)$ is an $A$-submodule and thus coincides with
$M$ by simplicity of the latter.

\section{Application to generalized Heisenberg algebras}\label{s6}

\subsection{General reduction}\label{s6.1}

In this section we consider a number of concrete applications of the above results to classifications
of simple weight modules over generalized Heisenberg algebra $\mathcal{H}(f)$ defined in the
introduction, where $f(h)\in\mathbb{C}[h]$. Note that the element $z$ is central in $\mathcal{H}(f)$
and hence, by Schur's lemma, it acts as a scalar on each simple $\mathcal{H}(f)$-module.

Fix $\dot{z}\in \mathbb{C}$ and consider the quotient $\mathcal{H}(f)_{\dot{z}}$ of $\mathcal{H}(f)$ by the
principal central ideal generated by $(z-\dot{z})$. Note that the algebra $\mathcal{H}(f)_{\dot{z}}$ is a wGWA for 
$R=\mathbb{C}[h]$, $t=h+\dot{z}$ and $\sigma:R\to R$ defined by $\sigma(h)=f(h)$. Clearly, $\sigma$ is essential.

For $\chi\in \mathbb{C}$ we denote by $\mathbf{m}_{\chi}$ the maximal ideal $(h-\chi)$ in $R$ and in this
way identify $\mathrm{Max}(R)$ with $\mathbb{C}$. For $\chi\in \mathbb{C}$ the ideal 
$\sigma(\mathbf{m}_{\chi})$ is generated by $f(h)-\chi$. Therefore, for $\eta\in \mathbb{C}$ we have
$\sigma(\mathbf{m}_{\chi})\subset \mathbf{m}_{\eta}$ if and only if $f(\eta)=\chi$. Conversely, 
$\mathbf{m}_{f(\chi)}$ is the unique maximal ideal such that $\sigma(\mathbf{m}_{f(\chi)})\subset \mathbf{m}_{\chi}$.
Note that all this works even if $f$ is constant.

We have $t=h+\dot{z}\in \mathbf{m}_{-\dot{z}}$ and $t\not\in \mathbf{m}_{\chi}$ for $\chi\neq -\dot{z}$.
We also have $\sigma(t)=f(h)+\dot{z}\in \mathbf{m}_{\chi}$ if and only if $f(\chi)=-\dot{z}$.

Denote by $\Phi$ the oriented graph with vertices $\mathbb{C}$ and arrows $f(\chi)\leftarrow \chi$
for all $\chi\in\mathbb{C}$. Theorem~\ref{thm23} reduces classification of simple weight 
$\mathcal{H}(f)_{\dot{z}}$-modules to description of the dynamics of the action of the transformation  
$\chi\mapsto f(\chi)$ of $\mathbb{C}$, that is oriented paths in the graph $\Phi$.

\subsection{Degree zero case}\label{s6.2}

If $f$ has degree zero, then $f$ is a  constant polynomial, say with value $\theta$. 
Assume first that $\theta\neq-\dot{z}$. Then from  Theorem~\ref{thm23} we have that
$\mathcal{H}(f)_{\dot{z}}$ has a unique infinite dimensional simple weight module, namely 
$N^{\downarrow}(\underline{\mathbf{m}})$, where 
\begin{displaymath} 
\underline{\mathbf{m}}=(\dots,\mathbf{m}_{\theta},\mathbf{m}_{\theta},\mathbf{m}_{\theta},\mathbf{m}_{-\dot{z}});
\end{displaymath}
and a family of one-dimensional simple weight modules with support $\mathbf{m}_{\theta}$
indexed by $c\in \mathbb{C}^*$. Each module from the latter family coincides with 
$R/(h-\theta)$ as $R$-modules, on the module indexed by $c$ the element $X$ acts as $c$ and
the element $Y$ acts as $\frac{\theta+\dot{z}}{c}$.

If $\theta=-\dot{z}$, then from  Theorem~\ref{thm23} we have that 
$\mathcal{H}(f)_{\dot{z}}$ has the one dimensional simple weight module which coincides with 
$R/(h-\theta)$ as an $R$-module and on which both $X$ and $Y$
act as zero and, additionally, exactly two families of non-isomorphic simple weight modules
indexed by $\mathbb{C}^*$, all modules having dimension one. All modules in both families coincide with 
$R/(h-\theta)$ as $R$-modules. On modules of the first family $X$ acts as $c\in \mathbb{C}^*$ 
and $Y$ as zero. On modules of the second family $Y$ acts as $c\in \mathbb{C}^*$ 
and $X$ as zero.

\subsection{Degree one case}\label{s6.3}

If $f$ has degree one and $f(h)\neq h$ (the latter means that $A$ is not commutative), 
then for every $\chi\in\mathbb{C}$ there is a unique $\eta\in\mathbb{C}$ such that
$f(\eta)=\chi$. Let $\alpha\in\mathbb{C}$ be the unique number such that $f(\alpha)=-\dot{z}$.
From  Theorem~\ref{thm23} it thus follows that simple weight modules over
$\mathcal{H}(f)_{\dot{z}}$ are: $N^{\downarrow}(\underline{\mathbf{m}})$ where
\begin{displaymath} 
\underline{\mathbf{m}}=(\dots,\mathbf{m}_{f(f(f(-\dot{z})))},\mathbf{m}_{f(f(-\dot{z}))},\mathbf{m}_{f(-\dot{z})},\mathbf{m}_{-\dot{z}});
\end{displaymath}
$N^{\uparrow}(\underline{\mathbf{m}})$ where
\begin{displaymath} 
\underline{\mathbf{m}}=(\mathbf{m}_{\alpha},\mathbf{m}_{\alpha_1},\mathbf{m}_{\alpha_2},\mathbf{m}_{\alpha_3},\dots)
\end{displaymath}
with $f(\alpha_i)=\alpha_{i-1}$ for $i=1,2,\dots$; and $N(\underline{\mathbf{m}})$ where
\begin{displaymath} 
\underline{\mathbf{m}}=(\dots,\mathbf{m}_{\beta_{-2}},\mathbf{m}_{\beta_{-1}},\mathbf{m}_{\beta_{0}},
\mathbf{m}_{\beta_{1}},\mathbf{m}_{\beta_{2}},\dots)
\end{displaymath}
with $f(\beta_i)=\beta_{i-1}$ and $\beta_i\neq -\dot{z}$ for $i\in\mathbb{Z}$.

Note that in this case $\mathcal{H}(f)_{\dot{z}}$ is a genuine GWA.

\subsection{The case of $h^n$ for $n>1$}\label{s6.4}

Assume now that $f(h)=h^n$ for some $n>1$. In this case we have one-dimensional $\mathcal{H}(f)_{\dot{z}}$-modules
with support $\mathbf{m}_{0}$. If $\dot{z}\neq 0$, we have one $\mathbb{C}^*$-indexed family of 
one-dimensional simple weight $\mathcal{H}(f)_{\dot{z}}$-modules with support $\{\mathbf{m}_{0}\}$
(defined similarly as in Subsection~\ref{s6.2}). If $\dot{z}= 0$, we have two $\mathbb{C}^*$-indexed families 
of  one-dimensional simple weight $\mathcal{H}(f)_{\dot{z}}$-modules with support $\{\mathbf{m}_{0}\}$, again
see Subsection~\ref{s6.2} for details. 

As $\mathbb{C}$ is algebraically closed, each $\mathbf{m}_{\chi}$ appears in some
$\underline{\mathbf{m}}\in {}_{\infty}Q_{\infty}$. The graph $\Phi$ has the connected component $\{0\}$
which is considered above. Another isolated component is the set of all roots of unity.
This is exactly the isolated component in which each path eventually converges to an oriented cycle,
in particular, it contains a lot of periodic elements in ${}_{\infty}Q_{\infty}$. Non-periodic
paths in this connected component give either double infinite string modules (if this path does not cross
$\dot{z}$) or left- or right-infinite string modules if the path crosses $\dot{z}$. Periodic paths give band modules
(again, one family of band modules with one dimensional weight spaces of the path does not
cross $\dot{z}$, and two families if the path crosses $\dot{z}$). band modules are finite dimensional.

Finally, we have the isolated component consisting of non-zero non roots of unity. Similarly to the above
this component gives double infinite string modules or  left- or right-infinite string modules.

As $t$ has degree one, bounded string modules do not exist.

\begin{example}\label{ex1257}
{\rm
Here is an explicit example of a simple weight module which has  both non-trivial finite dimensional
and infinite dimensional weight spaces. Take $f(h)=h^2$. Define $\theta_i=1$ for all $i\leq 0$ and
also define $\theta_j=\exp\big(\frac{2\pi\mathtt{i}}{2^j}\big)$ for $j\geq 1$ 
(here $\mathtt{i}$ denotes the imaginary unit). Note that $\theta_j^2=\theta_{j-1}$ for all $j\in\mathbb{Z}$.
Let $\dot z\in \mathbb{C}$ be such that $\dot z+\theta _i\ne0$ for all $i$. Finally, set 
\begin{displaymath}
\underline{\mathbf{m}}=(..., \mathbf{m}_{\theta_{-1}}, 
\mathbf{m}_{\theta_{0}}, \mathbf{m}_{\theta_{1}}, \mathbf{m}_{\theta_{2}}, ...). 
\end{displaymath}
Then the $\mathcal{H}(f)_{\dot{z}}$-module $N(\underbar m)$ is simple, its weight space 
$N(\underbar m)_{\mathbf{m}_1}$ is infinite dimensional while all other nonzero weight spaces have dimension one.
}
\end{example}

 

\vspace{2mm}

\noindent
R.L.: Department of Mathematics, Soochow university, Suzhou 215006,
Jiangsu, P. R. China; e-mail: {\tt rencail\symbol{64}amss.ac.cn}
\vspace{3mm}

\noindent
V.M.: Department of Mathematics, Uppsala University,
Box 480, SE-751 06, Uppsala, Sweden; e-mail: {\tt mazor\symbol{64}math.uu.se}
\vspace{3mm}

\noindent K.Z.: Department of Mathematics, Wilfrid Laurier
University, Waterloo, Ontario, N2L 3C5, Canada; and College of Mathematics and
Information Science, Hebei Normal (Teachers) University, Shijiazhuang 050016,
Hebei, P. R. China. e-mail:  {\tt kzhao\symbol{64}wlu.ca}


\begin{thebibliography}{99999}
\bibitem[Ba1]{Ba1} V.~Bavula. Finite-dimensionality of $\mathrm{Ext}^n$ and $\mathrm{Tor}_n$ of simple 
modules over a class of algebras. (Russian) Funktsional. Anal. i Prilozhen. {\bf 25} (1991), no. 3, 80--82.
\bibitem[Ba2]{Ba2} V.~Bavula. Generalized Weyl algebras and their representations. (Russian) 
Algebra i Analiz {\bf 4} (1992), no. 1, 75--97.
\bibitem[BB]{BB} V.~Bavula, V.~Bekkert. Indecomposable representations of generalized Weyl algebras. 
Comm. Algebra {\bf 28} (2000), no. 11, 5067--5100.
\bibitem[BJ]{BJ}  V.~Bavula, D.~Jordan. Isomorphism problems and groups of automorphisms for generalized 
Weyl algebras. Trans. Amer. Math. Soc. {\bf 353} (2001), no. 2, 769--794.
\bibitem[BO]{BO} G.~Benkart, M.~Ondrus. Whittaker modules for generalized Weyl algebras. 
Represent. Theory {\bf 13} (2009), 141--164. 
\bibitem[BR]{BR} G.~Benkart, T.~Roby. Down-up algebras. J. Algebra {\bf 209} (1998), no. 1, 305--344.
\bibitem[CRM]{CRM} E.~Curado, M.~Rego-Monteiro. Multi-parametric deformed Heisenberg algebras: a 
route to complexity. J. Phys. A {\bf 34} (2001), no. 15, 3253--3264.
\bibitem[DGO]{DGO} Yu.~Drozd, B.~Guzner, S.~Ovsienko. Weight modules over generalized Weyl algebras.
J. Algebra {\bf 184} (1996), no. 2, 491--504.
\bibitem[FGM]{FGM} V.~Futorny, D.~Grantcharov, V.~Mazorchuk. Weight modules over infinite dimensional Weyl algebras.
Preprint arXiv:1207.5780. 
\bibitem[FH1]{FH1} V.~Futorny, J.~Hartwig. Multiparameter twisted Weyl algebras. J. Algebra {\bf 357} (2012), 69--93.
\bibitem[FH2]{FH2} V.~Futorny, J.~Hartwig. On the consistency of twisted generalized Weyl algebras. 
Proc. Amer. Math. Soc. {\bf 140} (2012), no. 10, 3349--3363.
\bibitem[Ha1]{Ha1} J.~Hartwig. Locally finite simple weight modules over twisted generalized Weyl 
algebras. J. Algebra {\bf 303} (2006), no. 1, 42--76.
\bibitem[Ha2]{Ha2} J.~Hartwig. Pseudo-unitarizable weight modules over generalized Weyl algebras. 
J. Pure Appl. Algebra {\bf 215} (2011), no. 10, 2352--2377.
\bibitem[LZ]{LZ} R.~L{\"u}, K.~Zhao. Finite-dimensional simple modules over generalized Heisenberg algebras.
Preprint arXiv:1401.0700. 
\bibitem[Maz]{Maz} V.~Mazorchuk. A note on centralizers in $q$-deformed Heisenberg algebras. AMA 
Algebra Montp. Announc. {\bf 2001}, Paper 2, 6 pp. (electronic).
\bibitem[MT1]{MT1} V.~Mazorchuk, L.~Turowska. Simple weight modules over twisted generalized Weyl algebras. 
Comm. Algebra {\bf 27} (1999), no. 6, 2613--2625.
\bibitem[MT2]{MT2} V.~Mazorchuk, L.~Turowska. Involutions on generalized Weyl algebras preserving the 
principal grading. Rep. Math. Phys. {\bf 48} (2001), no. 3, 343--351.
\bibitem[MT3]{MT3} V.~Mazorchuk, L.~Turowska. $*$-representations of twisted generalized Weyl constructions. 
Algebr. Represent. Theory {\bf 5} (2002), no. 2, 163--186.
\bibitem[Sh]{Sh} I.~Shipman. Generalized Weyl algebras: category $\mathcal{O}$ and graded Morita equivalence. 
J. Algebra {\bf 323} (2010), no. 9, 2449--2468.
\bibitem[St]{St} E.~Steinitz. Algebraische Theorie der K{\"o}rper. Chelsea Publishing Co., New York, 
N. Y., 1950.
\end{thebibliography}
\end{document}